\documentclass{amsart}

\usepackage{amssymb}
\usepackage{amsmath}
\usepackage{amsthm}
\usepackage[dvips]{graphicx}
\usepackage{color}

\newtheorem{theo}{Theorem}[section]
\newtheorem{lemm}[theo]{Lemma}

\newtheorem{defi}[theo]{Definition}

\newtheorem{prop}[theo]{Proposition}
\newtheorem{rema}[theo]{Remark}

\newcommand\RR{{\mathbb R}}

\def\SS{\mathbb {S}}

\def\e{\eqno}
\def\la{\langle}
\def\ra{\rangle}

\let\a=\alpha
\let\b=\beta
\let\d=\delta
\let\e=\epsilon
\let\g=\gamma

\let\o=\omega
\let\p=\psi

\let\s=\sigma

\let\vp=\varphi

\let\D=\Delta

\newcommand\bN{{\mathbb N}}

\newcommand\bR{{\mathbb R}}

\newcommand\bS{{\mathbb S}}

\newcommand\cC{{\mathcal C}}
\newcommand\cD{{\mathcal D}}

\newcommand\cF{{\mathcal F}}

\newcommand\cS{{\mathcal S}}
\newcommand\cM{{\mathcal M}}

\newcommand\cK{{\mathcal K}}
\newcommand\cG{{\mathcal G}}
\newcommand\cX{{\mathcal X}}

\let\wt=\widetilde

\newcommand\pa{\partial}

\newcommand\re{\mathop{\rm Re}\, }

\let\dis=\displaystyle

\date{today}
\begin{document}
\title[Measure valued Solutions]
{Measure valued solutions  to the spatially\\ homogeneous
Boltzmann equation\\ without angular cutoff
}
\author{Yoshinori Morimoto }
\address{Yoshinori Morimoto, Graduate School of Human and Environmental Studies,
Kyoto University,
Kyoto, 606-8501, Japan} \email{morimoto@math.h.kyoto-u.ac.jp}
\author{Shuaikun Wang}
\address{Shuaikun Wang , Department of mathematics, City University of Hong Kong,
Hong Kong, P. R. China
} \email{shuaiwang4-c@my.cityu.edu.hk}
\author{Tong Yang}
\address{Tong Yang, Department of mathematics, City University of Hong Kong,
Hong Kong, P. R. China; \& Department of Mathematics, Shanghai Jiao Tong University, Shanghai, P. R. China
} \email{matyang@cityu.edu.hk}

\subjclass[2010]{primary 35Q20, 76P05, secondary  35H20, 82B40, 82C40, }

\keywords{Boltzmann equation, homogenuous,
measure valued solutions, characteristic functions.}

\date{March 3, 2016}

\begin{abstract}
A uniform approach is introduced to study the existence of
measure valued solutions to the homogeneous Boltzmann equation
for both hard potential with finite energy, and soft potential with finite or infinite energy, by using Toscani metric. Under the
non-angular cutoff assumption on the cross-section, the solutions
obtained are shown to be in the Schwartz space in the velocity variable
as long as the initial data is not a single Dirac mass without any
extra moment condition for hard potential, and with the boundedness
on moments of any order for soft potential.
\end{abstract}
\maketitle

\section{Introduction}\label{s1}

The spatially 
 homogeneous Boltzmann equation;
\begin{equation}\label{BE}
\partial_t f(t,v) 
=Q(f, f)(t,v),
\end{equation}
describes the time evolution of the velocity distribution of a spatially
homogeneous dilute gas of particles. The most interesting and
important part of this equation is the collision operator
given on the
 right hand side that captures the change rates of the density distribution
through elastic binary collisions:
\[
Q(g, f)(v)=\int_{\RR^3}\int_{\mathbb S^{2}}B\left({v-v_*},\sigma
\right)
 \left\{g(v'_*) f(v')-g(v_*)f(v)\right\}d\sigma dv_*\,,
\]
where for $\sigma \in \SS^2$
$$
v'=\frac{v+v_*}{2}+\frac{|v-v_*|}{2}\sigma,\,\,\, v'_*
=\frac{v+v_*}{2}-\frac{|v-v_*|}{2}\sigma,\,
$$
that follow from the conservation of momentum and energy,
\[ v' + v_*' = v+ v_*, \enskip |v'|^2 + |v_*'|^2 = |v|^2 + |v_*|^2.
\]

We consider the non-negative cross section
$B$  of  the form
\begin{align}\label{cross-section}
B(|v-v_*|, \cos \theta)=\Phi (|v-v_*|) b(\cos \theta),\,\,\,\,\,
\cos \theta=\frac{v-v_*}{|v-v_*|} \, \cdot\,\sigma\, , \,\,\,
0\leq\theta\leq\frac{\pi}{2},
\end{align}
where
\begin{align}
&\Phi(|z|)=\Phi_\gamma(|z|)= |z|^{\gamma},\mbox{ for some $\gamma>-3$},   \label{cross-section1} \\
& b(\cos \theta)\theta^{2+2s}\,\rightarrow K
\mbox{ when } \theta\rightarrow 0+,
\mbox{ for } 0<s<1 \mbox{ and }K>0.\label{cross-section2}
\end{align}
For the case $\g>0$, we speak of hard potentials; For the case $-3<\g<0$, we speak of soft potentials, and for the case $\g=0$, we speak of Maxwellian molecule. 

An important example of this singular cross section is  derived from the inverse power law potential $\rho^{-r}$, $\rho$ being the distance between two interacting particles, in which $s=\frac1r\in(0,1)$ and $\g=1-4s\in(-3,1)$.
As usual, the range of $\theta$ can be restricted to $[0,\pi/2]$, by replacing $b(\cos\theta)$
by its ``symmetrized'' version
\[
[ b(\cos \theta)+b(\cos (\pi-\theta))]{\bf 1}_{0\le \theta\le \pi/2}.
\]

The equation \eqref{BE} is supplemented with an  initial datum 
\begin{equation}\label{initial}
f(0,v) = F_0,
\end{equation}
which is generally assumed to be a density of a probability measure, after normalizing time variable $t$ suitably.
We consider the Cauchy problem \eqref{BE}-\eqref{initial} for an initial datum $F_0$ with a finite second
moment, that is, $F_0 \in P_2(\RR^3)$. Here, in general,  we denote by 
$P_\alpha(\RR^3)$, $\a \in [0, \infty)$, the set of  probability measures  $F$ on $\RR^3$, such that
\[
\int_{\RR^3} |v|^\alpha dF(v) < \infty, \,
\]
and moreover when  $ \a >1 $, it requires that
\begin{align}\label{mean}
\int_{\RR^3} v_j dF(v) = 0, \enskip j =1,2,3\,,
\end{align}
which is always satisfied by the translation in $\RR^3$.  

The purpose of the present paper is
to show that,  for both hard ($\gamma >0$) and moderately soft ($0 > \gamma \ge -2$) potentials, 
the Cauchy problem  \eqref{BE}-\eqref{initial} have 
a measure valued solution in the following sense.

\begin{defi}[Measure valued solution]\label{weak-solution}
	Let $B(\cdot)$ satisfy \eqref{cross-section}-\eqref{cross-section2}
and let $F_0 \in P_\a(\RR^3)$ with $0 < \a \le 2$.
We say that $F_t\in L^\infty([0,\infty); P_\a(\bR^3))$ is a measure valued solution to the Cauchy problem 
 \eqref{BE}-\eqref{initial}
if it satisfies
for every $\psi(v)\in C_b^2(\bR^3)$,
		\begin{align}\label{definition2-2}
	\int_{\bR^3}\psi(v)dF_t=\int_{\bR^3}\psi(v)dF_0+\frac12&\int_0^t\int_{\bR^3}\int_{\bR^3}\int_{\bS^2}b(\cdot)|v-v_*|^\g (\psi(v_*')+\psi(v')\\
		&-\psi(v_*)-\psi(v))dF_\tau(v)dF_\tau(v_*)d\s d\tau.\nonumber
		\end{align}
In particular, when $\a =2$, 
the energy conservation law holds
\begin{align}\label{definition2-1}
		\forall t \ge 0,  \enskip \int|v|^2dF_t =  \int|v|^2dF_0\,,
		\end{align}
so that $F_t\in C([0,\infty); P_2(\bR^3))$.		

\end{defi}

It should be noted that the moment conservation law holds because of  \eqref{mean}, and the definition does not contain
the finite entropy condition (cf., Definition 1 of \cite{villani}). The definition implicitly requires
the finiteness of the second term of the right hand side of \eqref{definition2-2}, similar as in Definition 1.1 of \cite{lu-mouhot}.
We are now ready to state our main results.

\begin{theo}\label{main-theo}
	Let $B(\cdot)$ satisfy \eqref{cross-section}-\eqref{cross-section2} and let $-2\le \gamma \le 2$.
 
\noindent
(1)  For any initial datum  $F_0\in P_2(\bR^3)$, we have  a measure valued solution $F_t$ $\in$ 
$C([0,\infty);P_2(\bR^3))$ to the Cauchy problem \eqref{BE}-\eqref{initial}.
In the hard potential case $ \gamma >0$, for any $\ell>0$ and  any $t_0 >0$ we have a measure valued solution $F_t$ $\in$ 
$C([0,\infty);P_2(\bR^3))$ such that 
\begin{equation}\label{moment-gain}
\mbox{$\dis \int|v|^\ell dF_{t} < \infty$ for any $t \ge t_0$.}
\end{equation}
In the (moderately) soft potential case $0 > \gamma \, (\ge -2)$,  we have a measure valued solution $F_t$ $\in$ 
$C([0,\infty);P_2(\bR^3))$ satisfying the property of the moment propagation: 
\begin{equation}\label{propagation-moments}
\mbox{for any $\ell > 0$, $\dis \int|v|^\ell dF_{0} < \infty$ implies $\dis \int|v|^\ell  dF_{t}  \in L^\infty_{loc} ([0,\infty))$}.
\end{equation}

\noindent
(2)When  $-2 \le \gamma <0$,  let 
\begin{align*}
c_{\gamma,s} =\left\{ \begin{array}{lcl}
\max\{\frac{\gamma}{2s} + 1, 0\}&\mbox{if} & 0<s< 1/2,\\
\max\{\gamma+2s, 0\}& \mbox{if} &\gamma +2s < 1 \enskip \mbox{and} \enskip  1/2 \le s <1,\\
\frac{\gamma}{2s-1}+2 &\mbox{if} &  \gamma +2s \ge 1\,\, (, \,\mbox{which implies } \, 1/2 < s).
\end{array}
\right.
\end{align*}
Then,
for any initial datum  $F_0\in P_\a(\bR^3)$ with $\a \in ( c_{\gamma,s}, 2)$,   we have  a measure valued solution $F_t$ $\in$ 
$C([0,\infty);P_{\a}(\bR^3))$ 
to the Cauchy problem \eqref{BE}-\eqref{initial}.
\end{theo}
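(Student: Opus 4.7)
The plan is to construct the measure-valued solution as a limit of classical solutions to a regularized Boltzmann equation. Specifically, I would truncate the cross-section in two ways: cut off the angular singularity of $b(\cos\theta)$ so that $b_n \in L^1(\SS^2)$, and in the soft potential case also truncate $|v-v_*|^\gamma$ near the origin. For smoothed initial data $F_0^n$ approximating $F_0$ in the weak topology, the regularized Cauchy problem admits classical $L^1$ solutions $F_t^n$ by Arkeryd--Mischler--Wennberg type theory. The goal then reduces to showing that $\{F_t^n\}$ is Cauchy in a Fourier-based metric that is strong enough to pass to the limit in the weak formulation \eqref{definition2-2}.

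The natural choice is a Toscani-type metric
\[
 d_\alpha(F,G) = \sup_{\xi \in \bR^3 \setminus \{0\}} \frac{|\hat F(\xi) - \hat G(\xi)|}{|\xi|^\alpha},
\]
with $\alpha$ matching the moment assumption on $F_0$. Writing \eqref{BE} on the Fourier side and estimating $\hat F_t^n - \hat F_t^m$ should yield a Gronwall-type inequality
\[
 \partial_t d_\alpha(F_t^n, F_t^m) \leq C\, d_\alpha(F_t^n, F_t^m) + o(1)_{n,m\to\infty},
\]
where the remainder comes from $B_n \to B$ and $F_0^n \to F_0$, and $C$ depends on moment bounds uniform in $n$. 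Compactness then gives a limit $F_t$, and verifying \eqref{definition2-2} reduces to passing to the limit in the bilinear collision term, for which the uniform moment bounds together with a splitting into grazing and non-grazing pieces suffice.

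For part (1), to obtain the moment estimates on the limit I would argue at the level of the approximants. In the hard potential case ($\gamma>0$), a Povzner-type lemma yields a differential inequality for $M_\ell(t) := \int |v|^\ell dF_t^n$ of the form $M_\ell'(t) \leq -c\, M_{\ell+\gamma}(t) + (\cdots)$, which produces instantaneous appearance of arbitrary moments and the bound \eqref{moment-gain} uniformly in $n$. In the soft potential case, the Povzner mechanism degenerates and one instead tests against a smoothed truncation of $(1+|v|^2)^{\ell/2}$ and invokes Gronwall to propagate $\int |v|^\ell dF^n_t$ whenever the initial moment is finite; this gives \eqref{propagation-moments}.

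The main obstacle is part (2), where the energy need not be finite and only a fractional moment of order $\alpha < 2$ is available. The threshold $c_{\gamma,s}$ is dictated by the requirement that the collision integral in \eqref{definition2-2} be absolutely convergent. Using the Taylor expansion $|\psi(v') + \psi(v'_*) - \psi(v) - \psi(v_*)| \leq C\|\psi\|_{C^2}|v-v_*|^2 \sin^2(\theta/2)$ together with the angular singularity $b \sim \theta^{-2-2s}$, the effective kinetic weight is $\min(|v-v_*|^{\gamma+2s},\, |v-v_*|^\gamma)$, which must be integrable against $dF_t \otimes dF_t$ for $F_t \in P_\alpha$. Distinguishing the three regimes (the sign of $\gamma+2s$ and whether $s<1/2$, $1/2 \leq s<1$ with $\gamma+2s<1$, or $\gamma+2s\geq1$) yields exactly the three branches in the definition of $c_{\gamma,s}$. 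The delicate point is closing the Cauchy estimate in $d_\alpha$ without a second moment: one must split the collision operator into a near-grazing piece (estimated by the Taylor expansion and the $\alpha$-moment) and a non-grazing piece (estimated by an $L^1$-type bound), then balance the two so that the fractional moment bound, itself propagated by a Gronwall argument tailored to $d_\alpha$, is sufficient.
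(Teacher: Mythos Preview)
Your overall architecture---cut off the cross-section, obtain uniform moment bounds via Povzner (hard) or Gronwall (soft), pass to the limit---matches the paper, and your moment discussion is essentially what Section~4 does. The genuine gap is the Cauchy step: you assert $\partial_t d_\alpha(F_t^n, F_t^m) \le C\, d_\alpha(F_t^n, F_t^m) + o(1)$ with $C$ uniform in $n,m$. For $\gamma \ne 0$ the Bobylev operator is nonlocal in $\xi$ (convolution against $\hat\Phi_n$), and the only available Lipschitz estimate in the Toscani distance---the paper's Lemma~2.2---carries the constant $A = \sup_z |\Phi_n(|z|)|$. For the hard cutoff $\Phi_n = |z|^\gamma \phi_c(|z|/n)$ one has $A \sim n^\gamma$; for the soft cutoff $\Phi_n = |z|^\gamma \psi_n(|z|)$ one has $A \sim (2n)^{|\gamma|}$. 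Either way $A \to \infty$ as the cutoff is removed, so your Gronwall does not close uniformly. This is consistent with Remark~1.3: uniqueness of the measure-valued solution is open for $\gamma \ne 0$, and a uniform Cauchy estimate in $d_\alpha$ would essentially prove it. The paper replaces the Cauchy argument by compactness: it shows (Lemma~3.1) that $\varphi^n = \cF(F_t^n)$ is equicontinuous on compact sets of $\bR^3_\xi \times [0,\infty)$---in $\xi$ via the uniform bound $\|\varphi^n - 1\|_\alpha \lesssim \int(1+|v|^2)dF_0$, in $t$ by testing against $e^{-iv\cdot\xi}$ and using the Taylor expansion together with, in the hard case, a uniform-in-$n$ time-integrated bound $\int_0^T\int\langle v\rangle^{2-\varepsilon}\min\{\langle v\rangle^\gamma,n^\gamma\}dF^n_\tau\, d\tau < C_T$ coming from the Povzner machinery---and then extracts a subsequential limit by Ascoli--Arzel\`a. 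Passage to the limit in \eqref{definition2-2} (Lemma~3.2) uses only this pointwise convergence plus the uniform moment bound, not a stability estimate. Incidentally, the paper does not smooth $F_0$: the cutoff equation is solved directly for measure initial data by a contraction in $\cK^\alpha$ on the Fourier side (Theorems~2.4--2.5), which avoids importing the $L^1$ theory.

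A smaller correction on part~(2): the threshold $c_{\gamma,s}$ is not dictated by absolute convergence of the collision integral for $\psi \in C_b^2$ (that requires only $\gamma \ge -2$ once $F_t \in P_2$), but by \emph{propagation of the $\alpha$-moment itself}. One tests the cutoff equation against $\psi_\alpha = \langle v\rangle^\alpha$, for which $\psi_\alpha \notin C_b^2$, and interpolates between the first- and second-order Taylor estimates to obtain, when $s \ge 1/2$, $|\Delta\psi_\alpha| \lesssim \theta^{1+\delta}|v-v_*|^{1+\delta}\big(\langle v\rangle^{(\alpha-1)^+}+\langle v_*\rangle^{(\alpha-1)^+}\big)^{1-\delta}$ for any $\delta \in (0,1]$ (Lemma~3.3). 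The requirement $\delta > 2s-1$ for angular integrability, combined with the requirement that $|v-v_*|^{\gamma+1+\delta}$ times the remaining weight close on the $\alpha$-moment, is what produces the three branches of $c_{\gamma,s}$; the case $s < 1/2$ uses the cruder bound $\Delta\psi_\alpha = O(|v-v_*|^2\theta)$ instead.
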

\begin{rema}\label{remark-main-thm}
Different from the Maxwellian molecule case, $\gamma =0$ (see \cite{toscani-villani}, \cite{morimoto-12}), it is 
unknown that the uniqueness of the (measure valued) solution to the Cauchy problem \eqref{BE}-\eqref{initial}
holds (see \cite{lu-wennberg} about counter examples without the energy conservation law) . 
Instead of \eqref{cross-section2}, we may assume a slightly general condition
\begin{equation}\label{integrability-b}
\mbox{$\exists\varepsilon >0$ such that 
$\dis \int_0^{\pi/2}b(\cos\theta)\sin^{3-2\varepsilon}\theta d\theta<\infty, $}
\end{equation}
which permits the angular cutoff case. 
In the angular cutoff case of hard potentials,  our measure valued solution is the strong solution 
in the sense of
\[
F_t \in C([0,\infty);P_2(\bR^3)) \cap C^1([0,\infty); P_0(\bR^3)),
\]
because it  follows from similar arguments as in \cite{lu-mouhot}, section 5.2, proof of part (a) that 
\[
t \mapsto Q^{\pm}(F_t, F_t) \in C([0,\infty);P_0(\bR^3)),
\]
where we write $Q = Q^+ - Q^-$.
\end{rema}

{ 
Since the weak solution given in Theorem \ref{main-theo} is energy conservative, 
it enjoys the smoothing effect if the initial datum is not a single Dirac mass,
by using 
the support expanding property of the energy conservative solution,
studied by Fournier \cite{Fournier-2015} in the non-cutoff case and 
Pulvirenti-Wennberg \cite{PW0, PW} in the cutoff case.

}

\begin{theo}\label{smoothing-1st-try} 
Let $B(\cdot)$ satisfy \eqref{cross-section}-\eqref{cross-section2} and assume  $2 \ge \gamma
>\max\{-2s, -1\}$. 

\noindent
(1) {\bf Case $\gamma>0$\,:} If 
the initial datum $F_0 \in P_2(\RR^3)$ is not a single Dirac mass, 
then for any $t_0 >0$ there exists a solution $F_t \in C([0,\infty);P_2(\bR^3))$
to the Cauchy problem \eqref{BE}-\eqref{initial}
such that 
$
F_t \in C^\infty([t_0, \infty); \mathcal{S}(\RR^3)).
$

\noindent
(2) {\bf Case $\gamma <0$\,:}
If  $F_0 \in P_2(\RR^3)$ is not a single Dirac mass and its support is compact 
 (or  $\int |v|^\ell dF_0 < \infty$ for any $\ell >0$), then we have a solution 
$
F_t \in C([0,\infty);P_2(\bR^3)) \cap C^\infty((0, \infty); \mathcal{S}(\RR^3))$. 
\end{theo}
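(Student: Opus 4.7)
The starting point is the energy-conservative measure-valued solution $F_t\in C([0,\infty);P_2(\bR^3))$ furnished by Theorem~\ref{main-theo}. The plan is to combine two ingredients. \emph{First}, the support-expansion property of energy-conservative solutions: for non-cutoff kernels, Fournier~\cite{Fournier-2015} (and Pulvirenti--Wennberg~\cite{PW0,PW} in the cutoff setting) shows that if $F_0$ is not a single Dirac mass then, for every $t>0$, the measure $F_t$ fails to be concentrated at any point and its support spreads into a set that is nondegenerate in a quantitative way. \emph{Second}, the fractional-diffusion structure of $Q$ in the non-cutoff regime gives a coercivity estimate of the form
\begin{equation*}
-\langle Q(g,f),f\rangle \gtrsim c_g\,\|f\|_{H^s}^2 - C_g\,\|f\|_{L^2}^2,
\end{equation*}
with $c_g>0$ as soon as $g$ is not a Dirac mass. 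Together these make $F_t$ a function in $H^s(\bR^3)$ for each $t>0$, and a standard bootstrap then lifts this to $H^k$ for every $k$.

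For case~(1) the hard-potential moment gain \eqref{moment-gain} of Theorem~\ref{main-theo} immediately provides all polynomial moments of $F_{t_0/2}$. Support expansion rules out concentration at that time, so the coercivity of the preceding paragraph delivers an $H^s$ gain; iterating the regularization in the spirit of the Maxwellian theory of~\cite{morimoto-12, toscani-villani}, together with the already-gained moments, upgrades $F_t$ to $\cS(\bR^3)$ for $t\ge t_0$. The time continuity in $C^\infty$ then follows from the equation itself, differentiating in $t$ and invoking \eqref{BE}.

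For case~(2) the moments cannot be instantaneously gained, which is exactly why one must assume that $F_0$ has compact support (or all moments): then \eqref{propagation-moments} propagates every moment on any compact time interval. Support expansion again ensures non-concentration at each $t>0$, and the same coercivity--bootstrap loop gives $F_t\in\cS(\bR^3)$ for every $t>0$. The restriction $\gamma>\max\{-2s,-1\}$ is needed precisely at this step, in order that the singular weight $|v-v_*|^\gamma$ be absorbed by the fractional dissipation and the constant $c_g$ be kept positive uniformly on compact time intervals.

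The main obstacle, as expected, is the very first regularity step: passing from ``$F_t$ is not a single Dirac mass'' to a \emph{quantitative} lower bound $c_g>0$ that makes the $H^s$ coercivity operational, when a priori $F_t$ is only a probability measure. The natural route is to work in the Fourier/Toscani framework already set up in the paper: use the quantitative version of Fournier's support expansion to derive a pointwise lower bound on $1-|\widehat{F}_t(\xi)|$ on suitable shells in $\xi$, and insert this into the Fisher-information-type dissipation inequality produced by the non-cutoff singularity~\eqref{cross-section2}. Once an initial $H^{-\varepsilon}$ (or any fractional) gain is obtained, the iteration up to $\cS(\bR^3)$ follows from standard non-cutoff Boltzmann smoothing arguments, combined with the moment information supplied by Theorem~\ref{main-theo}.
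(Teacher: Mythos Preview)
Your plan coincides with the paper's: Fournier's support expansion feeds a coercivity estimate (Proposition~\ref{first-coercivity} in the paper), and the moment gain/propagation of Theorem~\ref{main-theo} supplies the weights needed to run the bootstrap to $\cS$. Two refinements are worth noting. First, the energy inequality that drives the bootstrap is made rigorous via a time-dependent pseudodifferential mollifier $M_{\lambda(t)}^\delta(\xi)=\la\xi\ra^{\lambda(t)}(1+\delta\la\xi\ra)^{-N_0}$ with $\lambda(t)=Nt+a$: one starts from the trivial inclusion $P_\ell(\RR^3)\subset H^{-3/2-\varepsilon}_\ell(\RR^3)$ and gains a fixed $\varepsilon_0>0$ of Sobolev regularity on each short time step, using the commutator calculus of~\cite{amuxy7}; this device is exactly what dissolves the circularity you correctly flag (testing the equation against $f$ when $F_t$ is a priori only a measure), and your sketch leaves this mechanism unspecified. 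Second, your attribution of the constraint $\gamma>\max\{-2s,-1\}$ is slightly off: the condition $\gamma>-2s$ is indeed what allows the cancellation-lemma remainder \eqref{remainder-213} to be absorbed by the $H^s_{\gamma/2}$ coercivity via interpolation, but $\gamma>-1$ enters separately, and only when $s>1/2$, through the choice $\varepsilon_0=(\gamma+1)/10$ forced by the commutator bound \cite[(3.9)]{amuxy7} in the iteration step, not through the coercivity constant itself.
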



As for the existence of weak solutions for the measure valued initial datum, 
 the case $0 < \gamma \le 2$ and $0 < s<1$ was studied in Lu-Mouhot \cite{lu-mouhot}
by approximating the initial measure datum to the one with the density function which satisfies
the finite entropy and the finite moments of any order, through the Mehler transform,
see also Zhang-Zhang \cite{zhang-zhang} for the case $0 < \gamma \le 1$, by the probabilistic method.
The gain of the regularity up to the Besov space $B^s_{1, \infty}$ for a certain $s >0$ was proved in Fournier \cite{Fournier-2015}, 
by the probabilistic method, assuming  the existence of solutions with 
finite moments of any order in hard potential case and a certain order in soft case, if $-1 < \gamma <1$, $0< s <1/2$,
and this regularity gives the  finiteness of the entropy for any positive time.
%
%
For the other reference on the spatially homogeneous Boltzmann equation before 2002, one can refer the the classical review paper by Villani 
\cite{villani2}. 

The plan of this paper is as follows: In Section \ref{S2}, we construct the solution under the cutoff assumption
on both kinetic and angular parts of the cross section, by using the Toscani metric (\cite{Carlen-Gabetta-Toscani, Gabetta-Toscani-Wennberg, toscani-villani, Cannone-Karch-1}). The existence of the solution under the non cutoff assumption is proved in Section \ref{S3},
by using the equi-continuity of the Fourier transform of approximation solutions obtained in Section \ref{S2}, together with
the gain of moments in the hard potential case proved in Section \ref{S4}.
Finally, we show the smoothness of measure valued solutions, applying methods developed in \cite{amuxy7,HMUY, MUXY-DCDS}, 
and using the expanding of the support to the whole space proved in \cite{Fournier-2015}.


\section{Existence under cut-off assumption}\label{S2}
\setcounter{equation}{0}

Following Jacob \cite{Jacob} and Cannone-Karch \cite{Cannone-Karch-1}, call the Fourier transform of a probability measure $F \in P_0(\RR^3)$, that is,
\[
\varphi(\xi) = \hat f(\xi) = \cF(F)(\xi) =\int_{\RR^3} e^{-iv\cdot \xi} dF(v),
\]
a characteristic function.
Put
$\cK = \cF(P_0(\RR^3))$.
Inspired by a series of works by Toscani and his co-authors \cite{Carlen-Gabetta-Toscani, Gabetta-Toscani-Wennberg, toscani-villani}, Cannone-Karch defined a subspace
$\cK^\a$ for $\alpha\ge 0$ as follows:
\begin{align}\label{K-al}
\cK^\alpha =\{ \varphi \in \cK\,;\, \|\varphi - 1\|_{\alpha} < \infty\}\,,
\end{align}
where
\begin{align}\label{dis-norm}
\|\varphi - 1\|_{\alpha} = \sup_{\xi \in \RR^3} \frac{|\varphi(\xi) -1|}{|\xi|^\alpha}.
\end{align}
The space $\cK^\alpha $ endowed with the distance
\begin{align}\label{distance}
\|\varphi - \tilde \varphi\|_{\alpha} = \sup_{\xi \in \RR^3} \frac{|\varphi(\xi) -
	\tilde \varphi(\xi) |}{|\xi|^\alpha}
\end{align}
is a complete metric space (see Proposition 3.10 of \cite{Cannone-Karch-1}).
It follows that $\cK^\alpha =\{1\}$ for all $\alpha >2$ and the
following embeddings
(Lemma 3.12 of \cite{Cannone-Karch-1}) hold
\[
\{1\} \subset \cK^\alpha \subset \cK^\beta \subset \cK^0 =\cK \enskip \enskip
\mbox{for all $2\ge \alpha \ge \beta \ge 0$}.
\]

With this classification on characteristic functions, the global
existence of measure valued solution for the Maxwellian molecules case in $\cK^\alpha$ was studied in \cite{Cannone-Karch-1}(see also \cite{morimoto-12}). It was proved in \cite{MY} that the smoothing effect of 
measure valued solutions always occurs except for a single Dirac mass initial datum. 
After that, the results about the existence and smoothing effect of solutions have been improved 
in \cite{MWY-2014,MWY-2015} by introducing more precise characteristic function spaces $\cM^\a$ and $\widetilde{\cM}^\a$,
which satisfy relations
\begin{align*}
&\mbox{$\cK^\a \supsetneq  \cF(P_\a) =\cM^\a =\widetilde{\cM}^\a  \supsetneq \cK^{\a'}$} \mbox{
 if $0< \a  < \a'  \le 2, \a \ne 1$},\\
&\mbox{$\cK^1 \supsetneq \cM^1 \supsetneq \cF(P_1) =\widetilde{\cM}^1  \supsetneq \cK^{\a'}$}
\,\,\, \mbox{
 if $1 < \a'  \le 2$}.
\end{align*}
For the later use, we recall the space 
$\wt\cM^\alpha$:
\begin{align}\label{tilde-M-al}
\wt\cM^\alpha =\{ \varphi \in \cK\,;\, \|\re\varphi - 1\|_{\cM^\alpha}+||\vp-1||_\a < \infty\}\,,\enskip \a \in (0,2)\,,
\end{align} 
where $\re\vp$ stands for the real part of $\vp(\xi)$ and 
\begin{align}\label{integral-norm}
\|\varphi -1\|_{\cM^\a}  = \int_{\RR^3} \frac{|\varphi(\xi)-1 |}{|\xi|^{3+\a} }d\xi\,.
\end{align}
It should be noted (see \cite{MWY-2015}, ex.) that $\cK^2 = \cF(P_2(\RR^3))$, which is a key of our construction
of the measure valued solution for the finite energy initial datum.

In this part, we consider the Cauchy problem for the cut-off equation, that is,
\begin{align}\label{cutoff-BE}
f_t=\int_{\bR^3}\int_{\bS^2} b(\cdot)\Phi_c(|v-v_*|)(f(v_*')f(v')-f(v_*)f(v))dv_*d\s,
\end{align}
with the initial data $F_0\in P_\beta(\bR^3)$, $0<\b \le 2$, where we assume that $b$ is integrable and without loss of generality, we can set
$$\int_{\bS^2} b(\cdot)d\s=1 . $$
For the hard potential case, $\Phi_c$ takes the form
 $\Phi_c(|z|)=|z|^\g\phi_c(|z|)$, where $\phi_c\in C_0^\infty(\bR^3)$ supporting on $\{z:|z|\le2\}$ satisfies $|\phi_c|\le1$ and $\phi_c=1$ on $\{z:|z|\le1\}$.
While for the soft potentials case, we introduce a different cut-off way of $|v-v_*|^\g$ $(-2<\g<0)$: for any $r\ge2$, let $\psi_r(|z|)\in \cD(\bR)$ be a smooth function satisfying: (1) $\psi_r$ support on $\{z:1/2r\le|z|\le2r\}$; (2) $0\le\psi_r\le1$; (3) $\psi_r=1$ on $\{z:1/r\le|z|\le r\}$. Then, let
 $$
 \Phi_c(|z|)=|z|^\g\psi_r(|z|).
 $$

To investigate the equation \eqref{cutoff-BE}, we start from the Bobylev formula (see \cite{ADVW} and
 \cite{Bobylev-DANS-1975, Bobylev-1988} ) which reads as,
\begin{align}\label{cutoff-BE-bobylev}
\vp_t(\xi)=\int_{\bS^2} b\left(\frac{\xi\cdot\s}{|\xi|}\right)\int_{\bR^3}\hat\Phi_c(\zeta)(\vp(\xi^-+\zeta)\vp(\xi^+-\zeta)-\vp(\zeta)\vp(\xi-\zeta))d\zeta d\s.
\end{align}
where $\hat \Phi_c=\cF(\Phi_c)$.
As usual, we have the following estimates for $\hat \Phi_c$.
\begin{lemm}
	Let $\Phi_c$ be defined as above, then if $\Phi_c(|z|)=|z|^\g\phi_c(|z|)$ with $\g>-3$, we have
	$$
	\forall k\ge0,k\in \bN,\quad |\partial_\zeta^k\hat\Phi_c(\zeta)|\lesssim\frac1{{\la\zeta\ra}^{3+\g+k}};
	$$
	if $\Phi_c(|z|)=|z|^\g\psi_r(|z|)$, for all $N\ge1,k\ge0,N,k\in\bN$ and all $\zeta\in\bR^3,$ there exists $C_{r,N,k}>0$ such that
	$$
	 \quad |\partial_\zeta^k\hat \Phi_c(\zeta)|\lesssim \frac{C_{r,N,k}}{\la\zeta\ra^{2N}}.
	$$
\end{lemm}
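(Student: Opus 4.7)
My plan is to split the estimate into a low-frequency range $|\zeta|\le 1$ and a high-frequency range $|\zeta|\ge 1$. In the low-frequency range both inequalities reduce to the uniform bound
$$|\partial_\zeta^k \hat\Phi_c(\zeta)| \le \int_{\bR^3}|v|^k|\Phi_c(v)|\,dv,$$
whose finiteness in case (1) is ensured by $\g+k>-3$ together with the compact support of $\phi_c$, and in case (2) by the fact that $v^k\Phi_c\in C_c^\infty(\bR^3)$ with support in the annulus $\{1/(2r)\le|v|\le 2r\}$. The content of the lemma therefore lies in the high-frequency range $|\zeta|\ge 1$.

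I would handle case (2) first, as it is the easier one. Because $\psi_r$ vanishes in a neighborhood of the origin, $v^k\Phi_c(v)=v^k|v|^\g\psi_r(|v|)$ is smooth and compactly supported, with all derivatives fully controlled in terms of $r$, $k$, $N$. Applying the identity $|\zeta|^{2N}e^{-iv\cdot\zeta}=(-\Delta_v)^N e^{-iv\cdot\zeta}$ and integrating by parts $N$ times produces no boundary contributions and gives
$$|\zeta|^{2N}|\partial_\zeta^k\hat\Phi_c(\zeta)|\le \|(-\Delta_v)^N(v^k\Phi_c)\|_{L^1(\bR^3)}=:C_{r,N,k},$$
which yields the second estimate after dividing by $|\zeta|^{2N}$.

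For case (1) the strategy is more subtle because the singularity $|v|^\g$ at the origin (when $\g<0$) prevents iterating $(-\Delta_v)^N$ directly. I would split the $v$-integration at $R:=1/|\zeta|\le 1$, which for $|\zeta|\ge 1$ lies inside the region where $\phi_c\equiv 1$. On the inner ball $\{|v|\le R\}$ the trivial estimate $|e^{-iv\cdot\zeta}|\le 1$ gives
$$\int_{|v|\le R}|v|^{\g+k}\,dv \lesssim R^{3+\g+k}=|\zeta|^{-(3+\g+k)}.$$
On the annular region $\{R<|v|\le 2\}$ I would integrate by parts $N$ times with the adjoint transport operator $i\zeta\cdot\nabla_v/|\zeta|^2$, choosing $N>3+\g+k$. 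The dominant interior term has all $N$ derivatives landing on $|v|^\g$, producing an integrand of size $|v|^{\g+k-N}/|\zeta|^N$, whose integral over the annulus is of order $R^{3+\g+k-N}/|\zeta|^N=|\zeta|^{-(3+\g+k)}$, as required.

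The main obstacle I expect is the bookkeeping for the boundary terms produced at each integration by parts on the inner sphere $|v|=R$; the outer boundary $|v|=2$ contributes only Schwartz-type decay since $\phi_c$ and its derivatives are smooth in $\{|v|\ge 1\}$, safely away from the singularity. A direct scaling count on the $j$-th inner boundary term, combining the prefactor $|\zeta|^{-2j}$ from the iterated adjoint, the factor $|\zeta|^{j}$ coming from $\zeta\cdot\nu$ and the $j-1$ gradient contractions hitting $|v|^\g$, the integrand size $R^{\g+k-(j-1)}$, and the surface measure $R^2$, collapses under $R=1/|\zeta|$ to exactly $|\zeta|^{-(3+\g+k)}$ for every $j=1,\dots,N$. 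Summing the $N$ boundary terms with the inner and annular contributions, and patching with the low-frequency bound, then yields $|\partial_\zeta^k\hat\Phi_c(\zeta)|\lesssim\la\zeta\ra^{-(3+\g+k)}$.
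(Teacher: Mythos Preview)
Your proposal is correct. For the second estimate (the $\psi_r$ case) your argument is essentially identical to the paper's: both exploit that $v^k\Phi_c\in C_c^\infty(\bR^3)$ is supported away from the origin and transfer powers of $|\zeta|$ onto the integrand via $(-\Delta_v)^N$ (the paper uses $(1-\Delta_z)^N$, a cosmetic difference).

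For the first estimate (the $\phi_c$ case) the paper gives no argument at all and simply refers to \cite{AMUXY-JFA}. Your explicit proof---splitting at the scale $R=1/|\zeta|$, bounding the inner ball trivially, and integrating by parts $N>3+\g+k$ times on the outer shell with a scaling count of the boundary terms---is the standard route and your bookkeeping is right: each of the inner boundary terms and the residual interior term indeed collapses to $|\zeta|^{-(3+\g+k)}$. One small correction: the outer boundary at $|v|=2$ does not merely contribute ``Schwartz-type decay'' for the reason you state (smoothness alone would leave an $O(|\zeta|^{-1})$ term from the first integration by parts). Rather, it contributes \emph{nothing}, because $\phi_c\in C_c^\infty(\{|z|\le 2\})$ forces $\phi_c$ and all its derivatives to vanish at $|v|=2$; equivalently, integrate over $\{|v|>R\}$ with no upper cutoff and there is no outer boundary at all.
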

\begin{proof}
The proof of the first part of the lemma can be found in \cite{AMUXY-JFA}. When $k=0$, the second estimate follows
\begin{align*}
\left|(1+|\zeta|^2)^N\hat \Phi_c(\zeta)\right|
&=\left|\int[(1-\D_z)^N e^{-i\zeta\cdot z}] \Phi_c (|z|) dz\right|\\
&=\left|\int e^{-i\zeta\cdot z}(1-\D_z)^N \Phi_c (|z|) dz\right|\\
&\le\int_{\{1/2r\le|z|\le1/r\}\cup\{r\le|z|\le 2r\}}\left|(1-\D_z)^N \Phi_c (|z|)  \right|dz+||\Phi_c||_{L^1(\bR^3)}\\
&\le C_{r,N,0}.
\end{align*}
The case $k\neq0$ is similar because $\Phi_c$ supports on a compact subset of $\bR^3$ which doesn't contain the origin.
\end{proof}
We reformulate the problem as
\begin{align}
\vp_t+A\vp=\cG_1(\vp)+\cG_2(\vp),
\end{align}
where $A=\sup_{z\in\bR^3}|\Phi_c(|z|)|$ and
\begin{align}
\cG_1(\vp)(\xi)&=\int_{\bS^2}b\left(\frac{\xi\cdot\s}{|\xi|}\right)\int_{\bR^3}\hat\Phi_c(\zeta)\vp(\xi^-+\zeta)\vp(\xi^+-\zeta)d\s d\zeta;\\
\cG_2(\vp)(\xi)&=A\vp(\xi)-\int_{\bS^2}b\left(\frac{\xi\cdot\s}{|\xi|}\right)\int_{\bR^3}\hat\Phi_c(\zeta)\vp(\zeta)\vp(\xi-\zeta)d\s d\zeta.
\end{align}
Then we can write the equation in the following integral form,
\begin{align}
\vp(t,\xi)=e^{-At}\vp_0+\int_0^t e^{-A(t-\tau)}[\cG_1(\vp(\tau,\xi))+\cG_2(\vp(\tau,\xi))]d\tau.
\end{align}

Our first claim is
\begin{lemm}\label{g-positive-definite}
	For every $\vp\in\cK$, $\cG_1(\vp)$ and $\cG_2(\vp)$ are continuous and positive definite. Moreover,
 if we assume 
\begin{align}\label{alpha-cond}
\left\{ \begin{array}{ll}
\mbox{$0< \alpha < \min\{\gamma, 1\}$ }& \mbox{when $\gamma >0$,}\\ 
\mbox{$0 < \alpha \le 2$} & \mbox{when $\gamma <0$,} 
\end{array} \right.
\end{align}
then 
for any $\vp,\p\in\cK^\a $, there exists $C>0$ such that
	\begin{align}\label{g-estimate}
	|\cG_1(\vp)+\cG_2(\vp)-\cG_1(\p)-\cG_2(\p)|\le(A+C)||\vp-\p||_\a|\xi|^\a,
	\end{align}
	holds for all $\xi\in\bR^3$.
\end{lemm}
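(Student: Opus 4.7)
Continuity of $\cG_1(\vp)(\xi)$ and $\cG_2(\vp)(\xi)$ in $\xi$ follows from dominated convergence: $\vp\in\cK$ is bounded by $1$ and continuous, and the kernel $b(\cdot)\hat\Phi_c(\zeta)$ is absolutely integrable on $\bS^2\times\bR^3$ by the preceding lemma. Positive definiteness is structural. By the Bobylev representation \eqref{cutoff-BE-bobylev}, $\cG_1(\vp)=\cF\bigl(Q^+(F,F)\bigr)$ where $F=\cF^{-1}\vp\in P_0(\bR^3)$, and $Q^+(F,F)$ is a non-negative measure because $b,\Phi_c\ge 0$. For $\cG_2$, writing $\cG_2(\vp)=\cF\bigl(AF-Q^-(F,F)\bigr)$ and testing against any non-negative $\phi\in C_b(\bR^3)$ gives
\[
\la AF-Q^-(F,F),\phi\ra=\int_{\bR^3}\phi(v)\bigl[A-(\Phi_c*F)(v)\bigr]\,dF(v)\ge 0,
\]
since $\int_{\bS^2}b\,d\sigma=1$ together with $\Phi_c\le A$ forces $(\Phi_c*F)(v)\le A$; hence $\cG_2(\vp)$ is also positive definite.

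For the Lipschitz estimate \eqref{g-estimate}, I would decompose
\[
(\cG_1+\cG_2)(\vp)-(\cG_1+\cG_2)(\p)=A(\vp-\p)(\xi)+I(\xi),
\]
with $I(\xi)=\int_{\bS^2}b\int_{\bR^3}\hat\Phi_c(\zeta)E(\zeta,\xi)\,d\zeta\,d\sigma$ and
\[
E(\zeta,\xi)=\vp(\xi^-+\zeta)\vp(\xi^+-\zeta)-\vp(\zeta)\vp(\xi-\zeta)-\p(\xi^-+\zeta)\p(\xi^+-\zeta)+\p(\zeta)\p(\xi-\zeta).
\]
The linear piece is immediate from the definition of $\|\cdot\|_\a$: $|A(\vp-\p)(\xi)|\le A\|\vp-\p\|_\a|\xi|^\a$. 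The substantive task is to establish $|I(\xi)|\le C\|\vp-\p\|_\a|\xi|^\a$ for all $\xi\in\bR^3$.

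The crucial manipulation exploits the relation $\xi^++\xi^-=\xi$. Setting $D=\vp-\p$, I would first expand $E$ so each of the four resulting terms carries exactly one factor of $D$, then substitute $\zeta\mapsto\zeta-\xi^-$ in the two terms based at $\xi^-+\zeta$ and $\zeta\mapsto\xi^+-\zeta$ in the two terms based at $\xi^+-\zeta$. Using the identity $D(\zeta)\vp(\eta)+\p(\zeta)D(\eta)=\vp(\zeta)\vp(\eta)-\p(\zeta)\p(\eta)$, the four integrals collapse to
\[
I(\xi)=\int_{\bS^2}b\int_{\bR^3}\bigl[\hat\Phi_c(\zeta-\xi^-)-\hat\Phi_c(\zeta)\bigr]\bigl[\vp(\zeta)\vp(\xi-\zeta)-\p(\zeta)\p(\xi-\zeta)\bigr]d\zeta\,d\sigma,
\]
making the cancellation $I(0)=0$ manifest. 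Bounding the second bracket by $\|D\|_\a(|\zeta|^\a+|\xi-\zeta|^\a)$ and estimating the translation difference of $\hat\Phi_c$ by
\[
\int_{\bR^3}\bigl|\hat\Phi_c(\zeta-h)-\hat\Phi_c(\zeta)\bigr|(1+|\zeta|^\a)\,d\zeta\le C|h|^\a,\qquad h\in\bR^3,
\]
obtained by interpolating the trivial $L^1$ bound with an appropriate Sobolev bound ($W^{1,1}$ when $\a\le 1$, $W^{2,1}$ when $\a\in(1,2]$), yields $|I(\xi)|\le C\|D\|_\a|\xi|^\a$ in the regime $|\xi|\le 1$, using $|\xi^-|\le|\xi|$. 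Condition \eqref{alpha-cond} enters precisely here: for $\g>0$, $\a<\min\{\g,1\}$ is exactly what makes $|\zeta|^\a\hat\Phi_c$ and $|\zeta|^\a\nabla\hat\Phi_c$ lie in $L^1$ via $|\hat\Phi_c|\lesssim\la\zeta\ra^{-3-\g}$, whereas for $\g<0$ the rapid decay of $\hat\Phi_c$ accommodates any $\a\le 2$.

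The principal obstacle is the regime $|\xi|\ge 1$, where the change-of-variable estimate produces an unwanted factor of $|\xi|^\a$. My remedy is to treat this case separately by the direct pointwise bound $|E|\le C_\a\|D\|_\a(|\xi|^\a+|\zeta|^\a)$, obtained by writing $E$ as four terms, each a $D$-factor times a bounded characteristic function, and using the subadditivity $(a+b)^\a\le 2^{(\a-1)_+}(a^\a+b^\a)$ together with $|\xi^\pm|\le|\xi|$. This gives $|I(\xi)|\le C\|D\|_\a\bigl(\|\hat\Phi_c\|_{L^1}|\xi|^\a+\int|\hat\Phi_c||\zeta|^\a\,d\zeta\bigr)$, and since $|\xi|^\a\ge 1$ the additive constant is absorbed into $|\xi|^\a$. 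Combining the two regimes produces \eqref{g-estimate} with the stated constant $A+C$.
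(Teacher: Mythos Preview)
Your reduction to the identity
\[
I(\xi)=\int_{\bS^2}b\int_{\bR^3}\bigl[\hat\Phi_c(\zeta-\xi^-)-\hat\Phi_c(\zeta)\bigr]\bigl[\vp(\zeta)\vp(\xi-\zeta)-\p(\zeta)\p(\xi-\zeta)\bigr]\,d\zeta\,d\sigma
\]
coincides with the paper's, and for the hard potential case (where $\alpha<\min\{\gamma,1\}\le 1$) your interpolation between the $L^1$ and $W^{1,1}$ bounds on $\hat\Phi_c$ is correct and equivalent to the paper's mean-value computation. The treatment of $|\xi|\ge 1$ by the direct pointwise bound on $E$ is also fine.

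There is, however, a genuine gap in the soft potential case when $1<\alpha\le 2$, which \eqref{alpha-cond} explicitly allows. Your claimed estimate
\[
\int_{\bR^3}\bigl|\hat\Phi_c(\zeta-h)-\hat\Phi_c(\zeta)\bigr|(1+|\zeta|^\alpha)\,d\zeta\le C|h|^\alpha
\]
is false for small $|h|$ once $\alpha>1$: the first-order Taylor term $-\nabla\hat\Phi_c(\zeta)\cdot h$ survives under the absolute value and forces the left side to behave like $c|h|$, which dominates $|h|^\alpha$ as $|h|\to 0$. No ``$W^{2,1}$ interpolation'' can repair this, because the obstruction is not lack of regularity of $\hat\Phi_c$ but the fact that you have already taken absolute values and decoupled the two brackets. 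The paper's remedy is a symmetrization that you omit: since $\hat\Phi_c$ is real and even and $\vp(\zeta)\vp(\xi-\zeta)$ is invariant under $\zeta\mapsto\xi-\zeta$, one can average the integrand to replace $\hat\Phi_c(\zeta-\xi^-)-\hat\Phi_c(\zeta)$ by the second difference
\[
\tfrac12\bigl[\hat\Phi_c(-\zeta+\xi-\xi^-)+\hat\Phi_c(-\zeta+\xi^-)-\hat\Phi_c(-\zeta+\xi)-\hat\Phi_c(-\zeta)\bigr],
\]
in which the first-order terms cancel and a genuine second-order Taylor expansion yields an $O(|\xi|^2)$ bound (with rapidly decaying $\zeta$-weights, since here $\hat\Phi_c\in\cS$). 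This supplies the factor $|\xi|^{2-\alpha}\le 1$ for $|\xi|\le 1$ and any $\alpha\le 2$. Without this step your argument only covers $\alpha\le 1$.

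A minor point on positive definiteness: identifying $\cG_1(\vp)$ directly with $\cF(Q^+(F,F))$ presupposes the Bobylev formula for general measures $F\in P_0$, not just densities. The paper sidesteps this by mollifying $F$ with a Gaussian, checking positivity at the level of densities, and passing to the limit via dominated convergence; this is more self-contained.
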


\begin{proof}
	To prove $\cG_1(\vp)$ is a characteristic function, let
	$$
	\cG_1^m(\xi)=\int_{\bS^2} b\left(\frac{\xi\cdot\s}{|\xi|}\right) \int_{\bR^3} \hat\Phi_c(\zeta)\vp(\xi^-+\zeta)e^{-\frac{|\xi^-+\zeta|^2}{2m}}\vp(\xi^+-\zeta)e^{-\frac{|\xi^+-\zeta|^2}{2m}}d\zeta d\s.
	$$
	Since $\hat\Phi_c, b$ are integrable, we can apply the dominant convergence theorem to find that
	\[
	\cG_1^m(\xi)\to \cG_1(\vp)(\xi) \mbox{ pointwisely as } m\to \infty.
	\]
	So we only need to prove for each $m$, $\cG_1^m$ is characteristic function. The following proof follows Lemma 2.1 of \cite{PT}.\\
	
	Denote $F=\cF^{-1}(\vp)$, then $\cG_1^m(\xi)$ is the Fourier transform of the positive density
	$$
	G_1^m(F)(v)=\int_{\bR^3}\int_{\bS^2} b\left(\frac{(v-v_*)\cdot \s}{|v-v_*|}\right)\Phi_c(|v-v_*|)f_m(v_*')f_m(v')dv_*d\s
	$$
	where
	\begin{align*}
	f_m(v)=\int_{\bR^3} \o_m(v-u)dF(u),\quad
	\o_m(v)=\left(\frac{m}{2\pi}\right)^{3/2} e^{-\frac{m}{2}|v|^2}.
	\end{align*}
	Similarly, to prove $\cG_2(\vp)$ is a characteristic function, let
	$$
	\cG_2^m(\xi)=A\vp(\xi)e^{-\frac{|\xi|^2}{2m}}-\int_{\bS^2} b\left(\frac{\xi\cdot\s}{|\xi|}\right) \int_{\bR^3} \hat\Phi_c(\zeta)\vp(\zeta)e^{-\frac{|\zeta|^2}{2m}}\vp(\xi-\zeta)e^{-\frac{|\xi-\zeta|^2}{2m}}d\zeta d\s.
	$$
	We can find that
	$$
	\cG_2^m(\xi)\to \cG_2(\vp)(\xi) \mbox{ pointwisely as } m\to \infty.
	$$
	So we only need to prove for each $m$, $\cG_2^m$ is characteristic function. In fact, $\cG_2^m(\xi)$ is the Fourier transform of the positive density
	$$
	G_2^m(F)(v)=\left(A-\int_{\bR^3}\int_{\bS^2} b\left(\frac{(v-v_*)\cdot \s}{|v-v_*|}\right)\Phi_c(|v-v_*|)f_m(v_*)dv_*d\s \right) f_m(v)
	$$
	where
	\begin{align*}
	f_m(v)=\int_{\bR^3} \o_m(v-u)dF(u),\quad
	\o_m(v)=\left(\frac{m}{2\pi}\right)^{3/2} e^{-\frac{m}{2}|v|^2}.
	\end{align*}
	
	Next, to show \eqref{g-estimate}, we first consider the case $\Phi_c(|z|)=|z|^\g\phi_c(|z|)$, that is, hard potential case $\gamma >0$.
We only need to prove
	\begin{align}\label{g-estimate'}
	\int_{\bS^2}b\left(\frac{\xi\cdot\s}{|\xi|}\right)\int_{\bR^3}|\hat\Phi_c(\zeta-\xi^-)-\hat\Phi_c(\zeta)|\frac{|\zeta|^\a+|\xi-\zeta|^\a}{|\xi|^\a} d\zeta d\s\le C
	\end{align}
	for some constant $C>0$. 
	
	For $|\xi|\le1$,
	\begin{align*}
	\int_{\bR^3}&|\hat\Phi_c(\zeta-\xi^-)-\hat\Phi_c(\zeta)|\frac{|\zeta|^\a+|\xi-\zeta|^\a}{|\xi|^\a} d\zeta\\
	&\le\int_{\bR^3}\int_0^1|\pa_\zeta^1\hat\Phi_c(\zeta-\tau\xi^-)|d\tau|\xi^-|\frac{|\zeta|^\a+|\xi-\zeta|^\a}{|\xi|^\a} d\zeta\\
	&\lesssim\sin\frac{\theta}{2}\int_0^1\int_{\bR^3}\frac{|\zeta|^\a+|\xi-\zeta|^\a}{\la\zeta-\tau\xi^-\ra^{3+\g+1}}d\zeta d\tau\\
	&\lesssim\sin\frac{\theta}{2}\int_0^1\int_{\bR^3}\frac{2|\zeta-\tau\xi^-|^\a+1}{\la\zeta-\tau\xi^-\ra^{3+\g+1}}d\zeta d\tau\lesssim C \sin\frac\theta2.
	\end{align*}	
	For $|\xi|>1$,
	\begin{align*}
	\int_{\bR^3}&|\hat\Phi_c(\zeta-\xi^-)-\hat\Phi_c(\zeta)|\frac{|\zeta|^\a+|\xi-\zeta|^\a}{|\xi|^\a} d\zeta\\
	&\le\int_{\bR^3}|\hat\Phi_c(\zeta-\xi^-)|\frac{|\zeta|^\a+|\xi-\zeta|^\a}{|\xi|^\a} d\zeta+\int_{\bR^3}|\hat\Phi_c(\zeta)|\frac{|\zeta|^\a+|\xi-\zeta|^\a}{|\xi|^\a} d\zeta\\
	&\lesssim\int_{\bR^3}\frac{2|\zeta-\xi^-|^\a+2}{\la\zeta-\xi^-\ra^{3+\g}} d\zeta+\int_{\bR^3}\frac{2|\zeta|^\a+1}{\la\zeta\ra^{3+\g}} d\zeta\\
	&\le C.
	\end{align*}
	Therefore, \eqref{g-estimate'} is obtained in the case $\gamma >0$.

For the proof of \eqref{g-estimate} in the case  $\Phi_c  = |z|^\gamma \psi_r(|z|)$, that is, 
$\gamma <0$, 
notice that $\hat \Phi_c(\zeta)$ is real-valued function and satisfies $ \hat \Phi_c(\zeta)=\hat \Phi_c(-\zeta)$.  Then 
we have
\begin{align*}
&\int_{\bS^2} b\Big(\frac{\xi\cdot\s}{|\xi|}\Big)\int_{\bR^3}[\hat\Phi_c(\zeta-\xi^-)-\hat\Phi_c(\zeta)]\vp(\zeta)\vp(\xi-\zeta)d\s d\zeta \\
&=\frac12\int_{\bS^2}b\Big(\frac{\xi\cdot\s}{|\xi|}\Big)\int_{\bR^3}[\hat\Phi_c(-\zeta+\xi-\xi^-)+\hat\Phi_c(-\zeta+\xi^-)\\
& \qquad \qquad \qquad -\hat\Phi_c(-\zeta+\xi)-\hat\Phi_c(-\zeta)]\vp(\zeta)\vp(\xi-\zeta)d\s d\zeta.
\end{align*}
Instead of \eqref{g-estimate'}, we show 
\begin{align}\label{567}
&\int_{\bR^3}\left|\hat\Phi_c(-\zeta+\xi-\xi^-)+\hat\Phi_c(-\zeta+\xi^-) -\hat\Phi_c(-\zeta+\xi)-\hat\Phi_c(-\zeta)\right|\\
& \notag \qquad \qquad \qquad \qquad \qquad \qquad \quad \qquad \qquad \times \frac{|\zeta|^\a+|\xi-\zeta|^\a}{|\xi|^\a}d\zeta <C.
\end{align}
We only consider  the  case $|\xi|\le 1$( since  another case $|\xi|>1$ is easy ).
Using
\begin{align*}
&\Big|\hat\Phi_c(-\zeta+\xi^-)-\hat\Phi_c(-\zeta)-\nabla\hat \Phi_c(-\zeta)\cdot\xi^-\Big|\\
&=\Big|\int_0^1(1-\tau)(\xi^-)^T\nabla^2\hat \Phi_c(-\zeta+\tau\xi^-)(\xi^-)d\tau\Big|
\lesssim|\xi|^2\int_0^1\frac1{\la-\zeta+\tau \xi^-\ra^{2N}}d\tau,
\end{align*}
and
\begin{align*}
&\Big|\hat\Phi_c(-\zeta+\xi^+)-\hat\Phi_c(-\zeta+\xi)-\nabla\hat \Phi_c(-\zeta+\xi)\cdot(-\xi^-)\Big|\\
&=\Big|\int_0^1(1-\tau)(-\xi^-)^T\nabla^2\hat \Phi_c(-\zeta+\xi -\tau\xi^-)(-\xi^-)d\tau\Big|\\
&\lesssim|\xi|^2\int_0^1\frac1{\la-\zeta+\xi-\tau \xi^-\ra^{2N}}d\tau,
\end{align*}
we have
\begin{align*}
&\Big|\hat\Phi_c(-\zeta+\xi-\xi^-)+\hat\Phi_c(-\zeta+\xi^-)-\hat\Phi_c(-\zeta+\xi)-\hat\Phi_c(-\zeta)\Big|\\
&\lesssim\Big|\nabla\hat \Phi_c(-\zeta)\cdot\xi^-+\nabla\hat \Phi_c(-\zeta+\xi)\cdot(-\xi^-)\Big|\\
&\quad+|\xi|^2\int_0^1\frac1{\la-\zeta+ \xi -\tau \xi^-\ra^{2N}}+\frac1{\la-\zeta+\tau \xi^-\ra^{2N}}d\tau.
\end{align*}
Due to 
\begin{align*}
\Big|\nabla\hat \Phi_c(-\zeta)\cdot\xi^--\nabla\hat \Phi_c(-\zeta+\xi)\cdot\xi^-\Big|
&=\Big|
\int_0^1(-\xi)^T\nabla^2\hat \Phi_c(-\zeta+\tau\xi)(\xi^-)d\tau\Big|\\
&\lesssim|\xi|^2\int_0^1\frac1{\la-\zeta+\tau\xi\ra^{2N}}d\tau,
\end{align*}
we obtain
\begin{align*}
&\Big|\hat\Phi_c(-\zeta+\xi-\xi^-)+\hat\Phi_c(-\zeta+\xi^-)-\hat\Phi_c(-\zeta+\xi)-\hat\Phi_c(-\zeta)\Big|\\
&\lesssim|\xi|^2\int_0^1\frac1{\la-\zeta+\tau\xi\ra^{2N}}+\frac1{\la-\zeta+\xi -\tau \xi^-\ra^{2N}}+\frac1{\la-\zeta+\tau \xi^-\ra^{2N}}d\tau.
\end{align*}
Hence, \eqref{567} is clear for $|\xi| \le 1$.

\end{proof}

The existence and uniqueness of the solution to the cut-off equation can be obtained by applying the Banach contraction principle to the operator
\begin{align*}
\cF(\vp)(\xi,t)\equiv e^{-At}\vp_0+\int_0^t e^{-A(t-\tau)}[\cG_1(\vp(\tau,\xi))+\cG_2(\vp(\tau,\xi))]d\tau.
\end{align*}
For $T>0$, which will be determined later, denote $\cX_T^\a=C([0,T],\cK^\a)$, then $\cX_T^\a$ is a complete metric space with respect to the following
distance:
$$
||\vp(\xi,t)-\p(\xi,t)||_{\cX_T^\a}=\sup_{t\in[0,T]}||\vp(\xi,t)-\p(\xi,t)||_\a,\quad \mbox{ for all }\vp,\p\in\cX_T^\a.
$$

Then, we will prove
\begin{lemm}\label{contraction-mapping}
	For all $\vp\in\cK $, $\cF(\vp)$ is continuous and positive definite, and satisfies $\cF(\vp)(t,0)=1$. Moreover, for any $\vp,\tilde{\vp}\in\cK^\a$, there exists $T>0$, which is independent of $\vp_0$, such that $\cF:\cX_T^\a\to \cX_T^\a$ is a contraction mapping.
\end{lemm}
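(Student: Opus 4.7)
The plan is to verify the three claims in turn, each following quickly from Lemma~\ref{g-positive-definite} together with standard facts about positive definite functions.

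\textbf{Positive definiteness and continuity.} The operator $\cF(\vp)$ is a Bochner average, against the positive measure $e^{-At}\delta_0(d\tau) + e^{-A(t-\tau)}\mathbf{1}_{[0,t]}(\tau)d\tau$, of the characteristic function $\vp_0$ and of the positive definite functions $\cG_1(\vp(\tau))$, $\cG_2(\vp(\tau))$ given by Lemma~\ref{g-positive-definite}. Hence $\cF(\vp)(t,\cdot)$ is positive definite, and continuity in $\xi$ follows from dominated convergence using the uniform bound $|\cG_i(\vp)|\le A$.

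\textbf{Normalization at the origin.} At $\xi=0$ one has $\xi^\pm=0$, so the integrand in $\cG_1(\vp)$ coincides pointwise in $(\zeta,\sigma)$ with the one subtracted from $A\vp$ in $\cG_2(\vp)$. Consequently $\cG_1(\vp)(0)+\cG_2(\vp)(0)=A$, and a direct computation gives
$$
\cF(\vp)(t,0) = e^{-At} + A\int_0^t e^{-A(t-\tau)}d\tau = e^{-At}+(1-e^{-At})=1.
$$
Combined with positive definiteness and continuity, Bochner's theorem shows $\cF(\vp)(t,\cdot)\in\cK$.

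\textbf{Contraction and stability in $\cX_T^\a$.} From \eqref{g-estimate} in Lemma~\ref{g-positive-definite},
$$
|\cF(\vp)(\xi,t)-\cF(\tilde\vp)(\xi,t)|\le (A+C)|\xi|^\a\int_0^t e^{-A(t-\tau)}\|\vp(\tau)-\tilde\vp(\tau)\|_\a d\tau,
$$
so that $\|\cF(\vp)-\cF(\tilde\vp)\|_{\cX_T^\a}\le (A+C)T\|\vp-\tilde\vp\|_{\cX_T^\a}$ after using $1-e^{-At}\le At$. The threshold $T<1/(A+C)$ depends only on $b$ and $\Phi_c$ (not on $\vp_0$) and yields a strict contraction. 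To check that $\cF$ sends $\cX_T^\a$ to itself, apply the same estimate with $\tilde\vp\equiv 1$; since $\cF(1)\equiv 1$ by the normalization step, we obtain $\|\cF(\vp)(t,\cdot)-1\|_\a\le\|\vp_0-1\|_\a+(A+C)T\|\vp-1\|_{\cX_T^\a}<\infty$, and continuity $t\mapsto\cF(\vp)(t,\cdot)$ in the Toscani metric follows routinely from the integral representation. The only genuinely substantive point is the cancellation at $\xi=0$ ensuring that $\cF(\vp)$ is normalized to $1$; the remainder is direct bookkeeping with \eqref{g-estimate}.
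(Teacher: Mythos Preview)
Your proof is essentially correct and follows the same route as the paper: positive definiteness via Lemma~\ref{g-positive-definite}, normalization from $\cG_1(\vp)(0)+\cG_2(\vp)(0)=A$, and both stability and contraction from the Lipschitz bound \eqref{g-estimate}. There is one slip in the stability step: the claim ``$\cF(1)\equiv 1$'' is false, since $\cF(1)(t,\xi)=e^{-At}\vp_0(\xi)+(1-e^{-At})$ still depends on $\vp_0$. What is actually needed (and what the paper uses) is that $\cG_1(1)(\xi)+\cG_2(1)(\xi)=A$ for \emph{every} $\xi$, not just $\xi=0$; this gives
\[
\cF(\vp)-1=e^{-At}(\vp_0-1)+\int_0^te^{-A(t-\tau)}\big[\cG_1(\vp)+\cG_2(\vp)-\cG_1(1)-\cG_2(1)\big]\,d\tau,
\]
and then \eqref{g-estimate} yields precisely the bound $\|\cF(\vp)(t,\cdot)-1\|_\a\le\|\vp_0-1\|_\a+(A+C)T\|\vp-1\|_{\cX_T^\a}$ that you stated. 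Your final inequality is correct; only that one-line justification needs to be replaced.
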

\begin{proof}
	The proof of $\cF(\vp)$ is continuous and positive definite follows Lemma \ref{g-positive-definite}. We can also see that $\cF(\vp)(t,0)=1$ by using the fact $\cG_1(\vp)(t,0)+\cG_2(\vp)(t,0)=A$. Since
	\begin{align*}
	\cF(\vp)-1=e^{-At}(\vp_0-1)+\int_0^te^{-A(t-\tau)}(\cG_1(\vp)+\cG_2(\vp)-\cG_1(1)-\cG_2(1))d\tau,
	\end{align*}
	by Lemma \ref{g-positive-definite}, we have
	\begin{align*}
	\sup_{t\in[0,T]}||\cF(\vp)-1||_\a\le||\vp_0-1||_\a+(A+C)T||\vp-\tilde{\vp}||_{\cX_T^\a}<\infty.
	\end{align*}
	Hence, we proved $\cF$ maps $\cX_T^\a$ to itself.\\
	
	Next, for any $\vp,\tilde{\vp}\in \cX_T^\a$, using Lemma \ref{g-estimate} again, we can obtain
	\begin{align*}
	\sup_{t\in[0,T]}||\cF(\vp)-\cF(\tilde{\vp})||_\a\le(A+C)T||\vp-\tilde{\vp}||_{\cX_T^\a}.
	\end{align*}
	This shows $\cF$ is a contraction mapping if $T>0$ is sufficiently small.
\end{proof}

Lemma \ref{contraction-mapping} shows that there exists a unique solution $\vp(\xi,t)\in C([0,T],\cK^\a)$ to the problem by the Banach fixed point theory. Since the choice of $T$ doesn't depend on the initial data, we can apply the above argument to the initial data $\vp(\xi,T)$ and we can obtain the solution in $C([T,2T],\cK^\a)$. Continuing to do this, we can obtain the following theorem.
\begin{theo}\label{existence-cutoff-Bobylev}
	Let $b,\Phi_c$ be with the same setting as the beginning of this section.  Then for any $\vp_0\in \cK^\a$ 
with $\a$ satisfying
\eqref{alpha-cond}, there exists a unique classical solution $\vp(\xi,t)\in C([0,\infty),\cK^\a)$ to the Cauchy problem of equation \eqref{cutoff-BE-bobylev}.
\end{theo}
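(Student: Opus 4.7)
The plan is to apply the Banach fixed point theorem to the operator $\cF$ on the complete metric space $\cX_T^\a = C([0,T], \cK^\a)$ and then iterate. All the heavy lifting has been done in Lemmas \ref{g-positive-definite} and \ref{contraction-mapping}: the former ensures that $\cG_1(\vp)+\cG_2(\vp)$ maps characteristic functions to characteristic functions with a Lipschitz estimate in the $\|\cdot\|_\a$-norm, and the latter shows that $\cF$ is a contraction of $\cX_T^\a$ into itself with contraction constant $(A+C)T$, crucially with $T$ independent of the initial datum $\vp_0$.

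First I would fix $T_0 > 0$ small enough so that $(A+C)T_0 < 1$ and invoke the Banach fixed point theorem to obtain a unique fixed point $\vp \in \cX_{T_0}^\a$, which by construction satisfies the integral equation for $\cF(\vp) = \vp$ on $[0,T_0]$. To upgrade this mild solution to a classical one, I would observe that under the cutoff assumption, both $b$ and $\hat\Phi_c$ are integrable (in the soft potential case, $\hat\Phi_c$ is rapidly decreasing by the lemma on $\hat\Phi_c$ above), so the integrals defining $\cG_1(\vp)(t,\xi)$ and $\cG_2(\vp)(t,\xi)$ are absolutely convergent and depend continuously on $t$ through $\vp(t,\cdot) \in C([0,T_0], \cK^\a)$. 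Hence the right-hand side of the integral equation is $C^1$ in $t$, and differentiation recovers \eqref{cutoff-BE-bobylev} pointwise in $\xi$.

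Next, I would extend the solution globally by iterating. Since $\vp(\cdot, T_0) \in \cK^\a$ (because $\cF$ takes values in $\cX_{T_0}^\a$), the same fixed-point argument applied with $\vp(\cdot, T_0)$ as the new initial datum produces a unique solution on $[T_0, 2T_0]$; crucially the step $T_0$ does not change because it depends only on $A$ and $C$. Concatenating countably many such pieces yields $\vp \in C([0,\infty), \cK^\a)$, with continuity across the join-points guaranteed by matching values at $nT_0$. Global uniqueness follows by a standard propagation argument: two solutions agreeing on $[0, nT_0]$ must agree at $nT_0$ and hence, by local uniqueness, on $[nT_0, (n+1)T_0]$.

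The proof is therefore essentially a bookkeeping exercise once the two preceding lemmas are in hand; there is no genuine obstacle. The one subtle point worth checking carefully is that the contraction rate $(A+C)T$ in Lemma \ref{contraction-mapping} involves only the quantity $A = \sup_z |\Phi_c(|z|)|$ and the constant $C$ from the Lipschitz estimate \eqref{g-estimate}, neither of which depends on $\vp_0$; only with this uniformity can the local time-step $T_0$ be re-used indefinitely and the construction be pushed to $[0,\infty)$ without any a priori bound on the solution beyond membership in $\cK^\a$, which is automatic since $\cK^\a$ itself is preserved by $\cF$.
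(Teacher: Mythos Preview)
Your proposal is correct and follows essentially the same approach as the paper: obtain a local fixed point via Lemma~\ref{contraction-mapping}, then iterate using the fact that the contraction time $T$ is independent of the initial datum. You add a bit more detail than the paper (the passage from mild to classical solution and the explicit global uniqueness argument), but the core idea is identical.
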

As a result, we can obtain
\begin{theo}\label{existence-cutoff-BE}
	Let $b,\Phi_c$ be with the same setting as the beginning of this section. Then for any $F_0\in P_\b$, $0<\b \le 2$, there exists a unique solution $F_t$$\in $$C([0,\infty),P_\a)$ with $\a \in (0, \b)$ satisfying \eqref{alpha-cond}  to the Cauchy problem \eqref{cutoff-BE} in the distribution sense, that is , for any $\psi(v)\in C_b^2(\bR^3)$, the following holds,
	\begin{align}\label{distribution-sense}
	\int\psi(v)dF_t=\int\psi(v)dF_0
+&\frac 1 2\int_0^t\iint\int_{\bS^2}b(\cdot)\Phi_c( \psi(v_*') + \psi(v')\\
&-\psi(v_*)-\psi(v))dF_\tau(v_*)dF_\tau(v)d\s d\tau.\notag
	\end{align}
	Moreover, if $F_0\in P_2$, then the solution satisfies
	\begin{align}\label{energy-conservation}
	\int_{\bR^3}|v|^2dF_t=\int_{\bR^3}|v|^2dF_0,
	\end{align}
	for all  $t>0$.
\end{theo}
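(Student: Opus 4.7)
The plan is to invert the Fourier-side construction of Theorem \ref{existence-cutoff-Bobylev}. Given $F_0 \in P_\beta$, set $\varphi_0 = \cF(F_0)$, which lies in $\cF(P_\beta) \subset \cK^\beta \subset \cK^\alpha$ for every $0 < \alpha \le \beta$. Fix $\alpha \in (0, \beta)$ satisfying \eqref{alpha-cond}; Theorem \ref{existence-cutoff-Bobylev} then furnishes a unique $\varphi \in C([0,\infty); \cK^\alpha)$ solving \eqref{cutoff-BE-bobylev} with $\varphi(0,\cdot) = \varphi_0$. Because the Picard iterates of $\cF$ in Lemma \ref{contraction-mapping} remain in the class of continuous positive-definite functions taking value $1$ at $\xi = 0$ (by the explicit positivity constructed in Lemma \ref{g-positive-definite}), and this class is closed under pointwise convergence, the fixed point $\varphi(t, \cdot)$ inherits these properties. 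Bochner's theorem then yields a unique $F_t \in P_0(\bR^3)$ with $\cF(F_t) = \varphi(t, \cdot)$, and the L\'evy continuity theorem combined with continuity of $\varphi$ in $\cK^\alpha$ gives weak continuity of $t \mapsto F_t$.

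To obtain \eqref{distribution-sense}, pick $\psi \in \cS(\bR^3)$, multiply \eqref{cutoff-BE-bobylev} by $\hat\psi(\xi)$, and integrate in $\xi$. Parseval's identity together with the standard change of variables identifying $(\zeta, \xi-\zeta) \leftrightarrow (v_*, v)$ and $(\xi^- + \zeta, \xi^+ - \zeta) \leftrightarrow (v_*', v')$ reproduces the physical-side identity \eqref{distribution-sense} with $\Phi_c$ replacing $|v-v_*|^\gamma$. Since $\Phi_c$ is bounded on $\bR^3$ with compact support in both the hard cut-off case and the soft cut-off case, the quadruple integrand in \eqref{distribution-sense} is uniformly bounded whenever $\psi \in C_b^2$, and a density argument extends the identity from $\cS$ to $C_b^2(\bR^3)$.

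The crux is showing $F_t \in P_\alpha$, which cannot be read off from $\varphi(t)\in\cK^\alpha$ directly because $\cK^\alpha \supsetneq \cF(P_\alpha)$ when $\alpha < 2$. Insert into \eqref{distribution-sense} a smooth truncation $\psi_R$ equal to $|v|^\alpha$ on $|v| \le R$ and constant on $|v| \ge 2R$. Using $\|\Phi_c\|_{L^\infty} < \infty$, the energy identity $|v'|^2 + |v_*'|^2 = |v|^2 + |v_*|^2$, and the elementary bound $|v'|^\alpha + |v_*'|^\alpha \le C_\alpha(|v|^\alpha + |v_*|^\alpha)$ valid for $\alpha \in (0, 2]$, one checks $|\psi_R(v') + \psi_R(v_*') - \psi_R(v) - \psi_R(v_*)| \le C(|v|^\alpha + |v_*|^\alpha)$ uniformly in $R$. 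Gronwall's inequality then yields $\int \psi_R \, dF_t \le e^{Ct} \int |v|^\alpha \, dF_0$; letting $R \to \infty$ gives a uniform $\alpha$-moment bound on compact time intervals, hence $F_t \in P_\alpha$, and weak continuity plus this uniform moment bound upgrades the regularity to $F_t \in C([0,\infty); P_\alpha)$. Uniqueness of $F_t$ is inherited from the Fourier side.

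When $F_0 \in P_2$, the same Gronwall scheme applied at the endpoint $\alpha = 2$ gives $F_t \in P_2$ uniformly on $[0, T]$, and then \eqref{energy-conservation} follows by substituting into \eqref{distribution-sense} a smooth truncation $\psi_R$ of $|v|^2$: the integrand $\psi_R(v') + \psi_R(v_*') - \psi_R(v) - \psi_R(v_*)$ tends pointwise to $|v'|^2 + |v_*'|^2 - |v|^2 - |v_*|^2 = 0$, and dominated convergence (justified by the uniform $P_2$ bound) removes the truncation in the limit. The main obstacle throughout is the mismatch $\cF(P_\alpha) \subsetneq \cK^\alpha$ for $\alpha < 2$: Fourier-side control alone does not deliver finite $\alpha$-moments, so moment information must be recovered on the physical side via the weak formulation and a Gronwall argument rather than transferred directly from the contraction estimates of Section \ref{S2}.
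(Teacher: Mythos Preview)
Your overall strategy matches the paper's, and both the Fourier inversion step and the use of collision invariance for energy conservation are the same. There are two points worth flagging.

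First, the detour you take to get $F_t\in P_\alpha$ is unnecessary. You correctly observe that $\cK^\alpha\supsetneq \cF(P_\alpha)$ for $\alpha<2$, but you overlook that the condition \eqref{alpha-cond} is open from above: for any admissible $\alpha<\beta$ one can pick $\alpha'\in(\alpha,\beta)$ still satisfying \eqref{alpha-cond}, run Theorem~\ref{existence-cutoff-Bobylev} in $\cK^{\alpha'}$, and then invoke the inclusion $\cK^{\alpha'}\subset\cF(P_\alpha)$ (this is exactly the paper's one-line ``$\cF^{-1}(\cK^\beta)\subset P_\alpha$''). Uniqueness in $\cK^\alpha$ identifies the two solutions. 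No physical-side moment argument is needed here.

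Second, and more seriously, your Gronwall argument does not close as written. From $|\psi_R(v')+\psi_R(v_*')-\psi_R(v)-\psi_R(v_*)|\le C(|v|^\alpha+|v_*|^\alpha)$ you get
\[
\int\psi_R\,dF_t\le \int\psi_R\,dF_0 + C\int_0^t\!\!\int|v|^\alpha\,dF_\tau\,d\tau,
\]
and the right-hand side involves the full $\alpha$-moment, not $\int\psi_R\,dF_\tau$; since $\int|v|^\alpha\,dF_\tau$ is not known a priori to be finite, the loop does not close and Gronwall cannot be applied. The fix is to choose the truncation so that $\psi_R(v')\le \psi_R(v)+\psi_R(v_*)$ (subadditivity in the energy variable), which gives the closable inequality $\int\psi_R\,dF_t\le \int\psi_R\,dF_0+C\int_0^t\int\psi_R\,dF_\tau\,d\tau$. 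The paper does precisely this for $\alpha=2$ with the weight $W_\delta(v)=\la v\ra^2\la\delta v\ra^{-2}$: since $x\mapsto (1+x)/(1+\delta^2 x)$ is increasing and concave one has $W_\delta(v')\le W_\delta(v)+W_\delta(v_*)$, Gronwall closes at each $\delta>0$, and one sends $\delta\to 0$. Your generic truncation ``equal to $|v|^\alpha$ on $|v|\le R$, constant on $|v|\ge 2R$'' does not obviously have this property; you should either build it as a concave function of $|v|^2$ or simply adopt the paper's $W_\delta$.
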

\begin{proof}
	The existence and uniqueness of the solution follow Theorem \ref{existence-cutoff-Bobylev} once notice the inclusion $\cF^{-1}(\cK^\b)\subset P_\a$.
	
	To show \eqref{energy-conservation}, we first prove $\int|v|^2dF_t<\infty$.	
	Let $W_\d(v)=\la v\ra^2\la\d v\ra^{-2}$. Using $W_\d(v')\le W_\d(v)+W_\d(v_*)$, we can obtain
	\begin{align*}
	\frac{d}{dt}\int W_\d(v) dF_t(v)
	&=\iint b(\cdot)\Phi_c W_\d(v)(dF_t(v_*')dF_t(v')-dF_t(v_*)dF_t(v))\\
	&\le\iint b(\cdot)\Phi_c(W_\d(v)+W_\d(v'))dF_t(v_*)dF_t(v)\\
	&\lesssim\iint b(\cdot)\Phi_c(W_\d(v)+W_\d(v_*))dF_t(v_*)dF_t(v)\\
	&\lesssim\int W_\d(v)dF_t(v),
	\end{align*}
	Therefore,
	$$
	\int W_\d(v)dF_t(v)<C.
	$$
	Letting $\d\to0$, we have $\int|v|^2dF_t<\infty$. Then, \eqref{energy-conservation} holds by using the fact that $|v|^2$ is collision invariant.
\end{proof}

\section{Existence under non-cutoff assumption}\label{S3}
\setcounter{equation}{0}
In this section, we consider the Cauchy problem \eqref{BE}-\eqref{initial} under the assumptions $\gamma \ge -2$ and \eqref{integrability-b}.
 For each $n\ge2$, let $b_n=\min\{b,n\}$. For the hard potential case, let $\Phi_n(|z|)=\Phi(|z|)\phi_c(|z|/n)$; for the soft potential case, let $\Phi_n(|z|)=\Phi(|z|)\psi_n(|z|)$.
\subsection{Finite energy initial datum}\label{ss1}
By Theorem  \ref{existence-cutoff-BE}, for any $F_0\in P_2$, there exists a unique measure valued solution $F_t^n$ to the equation
\begin{align}\label{ncutoff-hard-BE}
f_t=\int_{\bR^3}\int_{\bS^2} b_n(\cdot)\Phi_n(|v-v_*|)(f(v_*')f(v')-f(v_*)f(v))dv_*d\s,
\end{align}
and $F_t^n$ satisfies \eqref{energy-conservation}. On the other hand, the corresponding solution $\vp^n=\cF(dF_t^n)\in C([0,\infty),\cK^\a)$ solves
\begin{align}\label{ncutoff-hard-BE-bobylev}
\frac{\partial\vp}{\partial t}=\int_{\bR^3}\int_{\bS^2} b_n\left(\frac{\xi\cdot\s}{|\xi|}\right)\int_{\bR^3}\hat\Phi_n(\zeta)(\vp(\xi^-+\zeta)\vp(\xi^+-\zeta)-\vp(\zeta)\vp(\xi-\zeta))d\zeta d\s,
\end{align}
with the initial data $\vp_0=\cF(dF_0)$.
\begin{lemm}\label{equi-continuity-sequence}
	The sequence of solutions $\{\vp^n(\xi,t)\}_{n=1}^\infty$ is bounded in $C(\bR^3\times[0,\infty))$ and equi-continuous on any compact subsets of   $\bR^3\times[0,\infty)$.
\end{lemm}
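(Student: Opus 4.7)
The plan is to separate the statement into boundedness on $\bR^3\times[0,\infty)$ and equi-continuity on compact subsets, handling oscillations in $\xi$ and in $t$ separately, using energy conservation together with the classical Taylor--cancellation estimate for the Boltzmann collision operator. Boundedness is immediate since each $\vp^n(\xi,t)=\cF(F_t^n)(\xi)$ is the Fourier transform of a probability measure, so $|\vp^n(\xi,t)|\le 1$. For equi-continuity in $\xi$, Theorem \ref{existence-cutoff-BE} supplies the uniform energy bound $\int|v|^2 dF_t^n=\int|v|^2 dF_0=:M$; Cauchy--Schwarz together with $|e^{-iv\cdot\xi_1}-e^{-iv\cdot\xi_2}|\le|v||\xi_1-\xi_2|$ then yields
\[
|\vp^n(\xi_1,t)-\vp^n(\xi_2,t)|\le M^{1/2}|\xi_1-\xi_2|
\]
uniformly in $n$ and $t\ge 0$.

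For equi-continuity in $t$ on a ball $\{|\xi|\le R\}$, I would test the distributional formulation \eqref{distribution-sense} against $\p(v)=e^{-iv\cdot\xi}\in C_b^2$ to obtain
\[
\vp^n(\xi,t_2)-\vp^n(\xi,t_1)=\frac12\int_{t_1}^{t_2}\iiint b_n(\cdot)\Phi_n(|v-v_*|)\D\p_\s\,dF_\tau^n dF_\tau^n d\s d\tau
\]
with $\D\p_\s=\p(v')+\p(v_*')-\p(v)-\p(v_*)$. The key tool is the uniform-in-$n$ Taylor--cancellation estimate
\[
\Big|\int_{\SS^2}b_n(\cos\theta)\D\p_\s\, d\s\Big|\le C|\xi|^2|v-v_*|^2,
\]
which I would prove by symmetrizing $\s\leftrightarrow\s_r$ (reflection across the axis along $v-v_*$, under which $b(\cos\theta)$ is invariant): the first-order Taylor term in $\xi^-=(\xi-|\xi|\s)/2$ then cancels, leaving a remainder weighted by $\int_0^{\pi/2} b(\cos\theta)(1-\cos\theta)\sin\theta\, d\theta$, finite under \eqref{cross-section2}, with the bound $b_n\le b$ providing uniformity. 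It then remains to bound $\iint\Phi_n(|v-v_*|)|v-v_*|^2 dF_\tau^n dF_\tau^n$ uniformly in $n$ and $\tau\in[0,T]$. For the moderately soft range $-2\le\gamma\le 0$ this is immediate from $|v-v_*|^{\gamma+2}\le 1+|v-v_*|^2\lesssim 1+|v|^2+|v_*|^2$ and the energy bound.

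The main obstacle is the hard range $0<\gamma\le 2$, where $|v-v_*|^{\gamma+2}$ naively requires moments of order $\gamma+2>2$ not supplied by $F_0\in P_2$. I would resolve this by invoking the uniform-in-$n$ moment propagation for the cutoff equation \eqref{ncutoff-hard-BE} via Povzner-type estimates as developed in Section \ref{S4}, giving $\sup_n\int|v|^{\gamma+2}dF_\tau^n\le C(\tau)$ locally integrable on $[0,T]$; near $t=0$ I would complement this with the alternative bound $|\D\p_\s|\le 2|\xi||v-v_*|\sin(\theta/2)$ combined with $\Phi_n|v-v_*|\le|v-v_*|^{\gamma+1}$ on $\{|v-v_*|>1\}$. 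This yields $|\vp^n(\xi,t_2)-\vp^n(\xi,t_1)|\le C(R,T)|t_2-t_1|$ uniformly in $n$ for $|\xi|\le R$, and combined with the $\xi$-Lipschitz estimate gives equi-continuity on any compact $K\subset\bR^3\times[0,\infty)$ by the triangle inequality.
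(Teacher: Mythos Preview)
Your treatment of boundedness, equi-continuity in $\xi$, and the soft potential case $-2\le\gamma\le0$ is correct and essentially matches the paper (your Lipschitz bound in $\xi$ via $|e^{-iv\cdot\xi_1}-e^{-iv\cdot\xi_2}|\le|v|\,|\xi_1-\xi_2|$ and Cauchy--Schwarz is in fact a bit cleaner than the paper's $\|\cdot\|_\a$ modulus argument).

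The hard potential case, however, has a genuine exponent mismatch. Your Taylor--cancellation bound $\big|\int_{\SS^2}b_n\,\Delta\psi_\sigma\,d\sigma\big|\lesssim |\xi|^2|v-v_*|^2$ forces you to control, uniformly in $n$, the quantity $\int_0^T\int\langle v\rangle^{2}\min\{\langle v\rangle^\gamma,n^\gamma\}\,dF^n_\tau\,d\tau$. But for $F_0\in P_2$ only, Section~\ref{S4} does \emph{not} give this: the Povzner machinery with the concave weight $\rho$ yields \eqref{limit-hard-strong-bis-n}, i.e.\ a uniform time-integral bound on $\langle v\rangle^{2-\varepsilon}\min\{\langle v\rangle^\gamma,n^\gamma\}$, with an unavoidable loss of $\varepsilon>0$. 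Your stated conclusion ``$\sup_n\int|v|^{\gamma+2}dF^n_\tau\le C(\tau)$ locally integrable'' is therefore one $\varepsilon$ too strong, and the ``alternative bound near $t=0$'' using $|\Delta\psi_\sigma|\le 2|\xi|\,|v-v_*|\sin(\theta/2)$ does not close the gap either, since $\int_0^{\pi/2}b(\cos\theta)\sin(\theta/2)\sin\theta\,d\theta$ diverges for $s\ge 1/2$.

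The paper's fix is to interpolate the Taylor estimate itself: from $\Delta\psi=O(|v-v_*|^2\theta^2)$ and $\Delta\psi=O(|v-v_*|\,\theta)$ one gets $\Delta\psi=O\big((|v-v_*|\,\theta)^{2-2\varepsilon}\big)$ for any $0<\varepsilon<1/2$, so that only $\int_0^{\pi/2}b(\cos\theta)\theta^{3-2\varepsilon}d\theta<\infty$ (which is \eqref{integrability-b}) is needed, and the velocity weight drops to $\langle v\rangle^{2-2\varepsilon}\min\{\langle v\rangle^\gamma,n^\gamma\}$. The extra $\varepsilon$ of room between this and the Povzner bound \eqref{limit-hard-strong-bis-n} is then spent in a Chebyshev-type splitting $\{\langle v\rangle<R\}\cup\{\langle v\rangle\ge R\}$: the far piece is $\lesssim C_T R^{-\varepsilon}$, the near piece is $\lesssim R^4|t-s|$, and one obtains equi-continuity in $t$ (not a Lipschitz bound, contrary to your final line). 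With this adjustment your proof goes through.
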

\begin{proof}\enskip
		{\bf Step 1:}  Uniform bound. $|\vp^n|\le1$.

\noindent 
		{\bf Step 2:} Equi-continuity with respect to $\xi$ in a compact set in $\RR^3_{\xi}$. This follows from
		\begin{align}\label{equi-estimate}
		|\vp^n(\xi)-\vp^n(\xi+\eta)|\le||1-\vp^n||_\a\left(2|\xi|^{\a/2}|\eta|^{\a/2}+|\eta|^\a\right) 
		\end{align}
		and
		\begin{align*}
		\sup_t\sup_{\xi\neq0}\frac{|\vp^n-1|}{|\xi|^\a}&=\sup_t||\vp^n(t)-1||_\a\\
		&\le C_\a\sup_t\int|v|^\a dF_t^n\\
		&\le C_\a\sup_t\int(1+|v|^2) dF_t^n\\
		&\le C_\a\int(1+|v|^2) dF_0.
		\end{align*}
		For the proof of \eqref{equi-estimate}, we refer (19) of  \cite{morimoto-12}.
		
\noindent
		{\bf Step 3:} Equi-continuity with respect to $t$. 
First we consider the soft potential case $0 > \gamma \ge -2$. 
        Let $\p \in C_b^2(\bR^3)$ be a test function in the definition of the measure valued solution. Then for any $0\le s<t\le T$, 
there exists a $C_\p>0$ independent of $n$ such that
\begin{align}
        &\left|\int\p dF^n_t(v)-\int \p dF^n_s(v)\right|\label{uniform-equi-time}\\\notag 
        &\le\frac12\int_s^t\Big|\iiint b_n(\p_*'+\p'-\p_*-\p)d\sigma \\
&\notag \qquad \qquad \times |v-v_*|^\g \psi_n(|v-v_*|)dF^n_\tau(v)dF^n_\tau(v_*) \Big |d\tau\nonumber\\
        &\le |t-s|C_\p \max_{s \le \tau \le t} \iint|v-v_*|^{\g+2}dF^n_\tau(v)dF^n_\tau(v_*)\nonumber\\
        &\le |t-s|C_\p \int\la v\ra^2 dF_0    \enskip \mbox{since $\gamma \ge -2$},\nonumber
        \end{align}
where we used the usual manners (cf, p.290-291 in \cite{villani} ) as follows: If $\dis \mathbf{k} = \frac{v-v_*}{|v-v_*|}$, then 
the Taylor expansion of order 2 for  $\p' -\p$,  
\begin{align*}
\p' -\p &= \nabla \p(v)(v'-v) + \int_0^1(1-\tau) \nabla^2\p(v+ \tau(v'-v))d\tau(v'-v)^2\\
&= \frac{|v-v_*|}{2}\nabla \p(v)\{(\sigma\cdot \mathbf{k})\mathbf{k} -\sigma\} + \nabla \p(v)\frac{v-v_*}{2}(1-
\mathbf{k}\cdot \sigma) + O(|v-v_*|^2\theta^2)
\end{align*}
and it for $\p'_* - \p_*$ imply
\begin{align}\label{justification}
&\p_*'+\p'-\p_*-\p - \frac{|v-v_*|}{2}\{\nabla \p(v)- \nabla \p(v_*)\}\{(\sigma\cdot \mathbf{k})\mathbf{k} -\sigma\} \\
& \qquad \qquad \qquad \qquad \qquad \qquad = O(|v-v_*|^2\theta^2) \nonumber 
\end{align}
because  of $\nabla(\psi - \psi_*) = O(|v-v_*|)$,  and we have 
\begin{align*}
\left|\int_{\SS^2} b_n (\p_*'+\p'-\p_*-\p)d\sigma \right|\le C_\p |v-v_*|^2,
\end{align*}
since  $\int_{\SS^2} b_n(\sigma\cdot \mathbf{k}) \{(\sigma\cdot \mathbf{k})\mathbf{k} -\sigma\} d\sigma =0$
(where we used only $\int_0^{\pi/2} b(\cos \theta) \sin^ 3 \theta d\theta < \infty$ instead of \eqref{integrability-b}).
Since we only consider the compact subset of $[0,\infty)\times\bR^3_\xi$, for any fixed $\xi$ which is bounded, we know $e^{-i\xi\cdot v}\in C_b^2(\bR^3_v)$. This ends the proof once we take $\p=e^{-i\xi\cdot v}$ in \eqref{uniform-equi-time}.

The hard potential case requires a subtle device, the Povzner inequality deeply studied in
\cite{mischler-wennberg}, which is briefly reviewed  in the next section.
Anyway,  similar way to the soft potential case, we estimate
\begin{align*}
        &\left|\int\p dF^n_t(v)-\int \p dF^n_s(v)\right|\\\notag 
        &\le\frac12\int_s^t\Big|\iiint b_n(\p_*'+\p'-\p_*-\p)d\sigma \\
&\qquad \qquad \times 
|v-v_*|^\g \phi_c(|v-v_*|/n)dF^n_\tau(v)dF^n_\tau(v_*) \Big |d\tau.
\end{align*}
Since it follows from  the first order expansion that 
\begin{align*}
&\p_*'+\p'-\p_*-\p - \frac{|v-v_*|}{2}\{\nabla \p(v)- \nabla \p(v_*)\}\{(\sigma\cdot \mathbf{k})\mathbf{k} -\sigma\}= O(|v-v_*|\theta)\\
&\mbox{ hence, }
\quad = O((|v-v_*|\theta)^{2-2\varepsilon}) \enskip \mbox{
for any $0 < \varepsilon <\frac12$},
\end{align*}
 we have 
\begin{align*}
        &\left|\int\p dF^n_t(v)-\int \p dF^n_s(v)\right|\\\notag 
&\le 
C_\psi \int_s^t  \iint \Big(\int_0^{\pi/2} b(\cos \theta) \theta^{3-2\varepsilon} d\theta\Big)\\
&\quad \times |v-v_*|^{\g+2-2\varepsilon} \phi_c(|v-v_*|/n)    dF^n_\tau(v)dF^n_\tau(v_*) d\tau\\
& \le C_\p 
\Big \{ |t-s| \max_{s \le  \tau \le t}  \int \la v \ra^2 dF_\tau^n(v) \int \la v_* \ra^{2}
dF_\tau^n(v_*)\\
&  \quad + \int_s^t \Big(\iint_{\bR^3_{v} \times \bR^3_{v_*} }
{\bf 1}_{\{|v| \ge 2|v_*|\} \cup \{|v_*| \ge 2|v|\}} \\
& \qquad \times |v-v_*|^{2-2\varepsilon}
\min\{|v-v_*|^\gamma, n^\gamma\}  dF_\tau^n(v) dF_\tau^n(v_*)\Big) d \tau\Big\}\\
& \le 
 C_\p \Big\{ |t-s| \Big(\int \la v \ra^2 dF_0(v)\Big)^2 + \int_s^t \Big(
\int  \la v \ra^{2-2\varepsilon}
\min\{\la v \ra^\gamma, n^\gamma\}  dF_\tau^n(v) \Big)d\tau \Big\}.
\end{align*}
Now we use \eqref{limit-hard-strong-bis-n} in the next section, that is, for any $T>0$ there exists a $C_T>0$
independent of $n$ such that 
\[
\int_0^T \Big(
\int  \la v \ra^{2-\varepsilon}
\min\{\la v \ra^\gamma, n^\gamma\}  dF_\tau^n(v) \Big)d\tau < C_T\,.
\]
If $0 \le s < t \le T$ and if $R > 1$, then we have 
\[
\int_s^t \Big(
\int_{\la v \ra \ge R}  \la v \ra^{2-2\varepsilon}
\min\{\la v \ra^\gamma, n^\gamma\}  dF_\tau^n(v) \Big)d\tau \le \frac{C_T}{R^\varepsilon},
\]
which leads us to the equi-continuity in $t$ because
\[
\int_s^t \Big(
\int_{\la v \ra <R }  \la v \ra^{2-2\varepsilon} 
\min\{\la v \ra^\gamma, n^\gamma\}  dF_\tau^n(v) \Big)d\tau \lesssim R^4 |t-s|.
\]
\end{proof}

By Lemma \ref{equi-continuity-sequence}, the Ascoli-Arzel\`{a} theorem, and the Cantor diagonal argument, we can conclude that there exists a subsequence of solutions $\{\vp^{n_k}\}_{n_k}\subset\{\vp^n\}_n$, denoted still by $\{\vp^n\}_{n=1}^\infty$, which converges uniformly in every compact subset of $\bR^3\times[0,\infty)$. Let
$$
\vp(\xi,t)=\lim_{n\to\infty}\vp^n(\xi,t),
$$
then $\vp(\cdot,t)$ is a characteristic function for every $t\ge0$ as the point-wise limit of characteristic functions. Denote $F_t=\cF^{-1}(\vp(\xi,t))$, then $F^n_t\to F_t$ in $\cS'(\bR^3)$. We will prove $F_t$ is a measure valued solution to the equation \eqref{BE}. Before this, we first prove the following lemma:
\begin{lemm}\label{weak-convergence-lemma}
	Let $F^n_t,F_t$ be defined as above.
	
		{\rm (i)} For any $t>0$, we have
		\begin{align}\label{energy-nonincreasing}
		\int|v|^2dF_t\le\int|v|^2dF_0.
		\end{align}

        {\rm (ii)} For any $T>0$ and $\psi\in C(\bR^3)$, satisfying the growth condition $|\psi(v)|\lesssim\la v\ra^l,$ for some $0<l<2$, we have
		\begin{align}\label{weak-convergence}
		\lim_{n\to\infty}\int\psi(v)dF^{n}_t=\int\psi(v)dF_t 
        \quad \mbox{ uniformly to } t\in[0,T].
		\end{align}
		
		{\rm (iii)} For any $T>0$ and $\Psi(v,v_*)\in C(\bR^3\times\bR^3)$,
        satisfying the growth condition
        $|\Psi(v,v_*)|\lesssim\la v\ra^l\la v_*\ra^l,$ for some $0<l<2$, we have
		\begin{align}\label{weak-convergence-product}
        \lim_{n\to\infty}\iint\Psi(v,v_*)dF^n_t(v)dF^n_t(v_*)
        =\iint\Psi(v,v_*)dF_t(v)dF_t(v_*),
		\end{align}
		uniformly to $t\in[0,T]$.
\end{lemm}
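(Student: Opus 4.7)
All three statements will be deduced from two ingredients already available: the uniform-on-compacts convergence $\vp^n\to\vp$ on $\bR^3\times[0,\infty)$ produced after Lemma \ref{equi-continuity-sequence}, together with the uniform bound
\[
\sup_{n\ge 1}\sup_{t\ge 0}\int_{\bR^3}|v|^2\,dF_t^n \;\le\; \int_{\bR^3}|v|^2\,dF_0,
\]
coming from the energy conservation \eqref{energy-conservation}. Note that pointwise convergence of $\vp^n(\cdot,t)$ to $\vp(\cdot,t)$ together with continuity at $\xi=0$ already yields weak convergence $F_t^n\rightharpoonup F_t$ for each $t$, by L\'evy's continuity theorem, and this weak convergence is what drives everything.

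\textbf{Part (i).} Fix a smooth cutoff $\chi_R\in C_c^\infty(\bR^3)$ with $\chi_R\equiv 1$ on $\{|v|\le R\}$, $0\le\chi_R\le1$, supported in $\{|v|\le 2R\}$. Since $|v|^2\chi_R(v)\in C_b^2(\bR^3)$, weak convergence gives
\[
\int |v|^2\chi_R\,dF_t \;=\; \lim_{n\to\infty}\int |v|^2\chi_R\,dF_t^n \;\le\; \liminf_{n\to\infty}\int |v|^2\,dF_t^n \;=\;\int|v|^2 dF_0.
\]
Letting $R\to\infty$ and applying the monotone convergence theorem on the left yields \eqref{energy-nonincreasing}.

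\textbf{Part (ii).} Split $\psi=\psi\chi_R+\psi(1-\chi_R)$. For the tail, the hypothesis $l<2$ gives
\[
|\psi(v)(1-\chi_R(v))| \;\lesssim\; \la v\ra^l\mathbf{1}_{\{|v|\ge R\}} \;\le\; R^{l-2}\la v\ra^2,
\]
so that
\[
\Big|\int\psi(1-\chi_R)\,(dF_t^n-dF_t)\Big|\;\lesssim\;R^{l-2}\int\la v\ra^2\,dF_0,
\]
uniformly in $n$ and $t\in[0,T]$ (using (i) to bound $\int\la v\ra^2 dF_t$). For the compactly supported, bounded continuous piece $\psi\chi_R$, approximate it uniformly by a Schwartz function $\eta_{\ve}$ with $\|\psi\chi_R-\eta_\ve\|_{L^\infty}<\ve$; since $F_t^n,F_t$ are probability measures, the approximation error is $\le 2\ve$ uniformly in $n,t$. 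For the Schwartz piece, Fourier inversion gives
\[
\int\eta_\ve\,(dF_t^n-dF_t)\;=\;(2\pi)^{-3}\int_{\bR^3}\wh{\eta_\ve}(\xi)\bigl(\vp^n(-\xi,t)-\vp(-\xi,t)\bigr)\,d\xi,
\]
and the integral is split at $|\xi|\le M$ and $|\xi|>M$: the outer part is controlled by $2\|\wh{\eta_\ve}\|_{L^1(|\xi|>M)}$ (made small by choosing $M$ large since $\wh{\eta_\ve}\in\cS$), while on $|\xi|\le M$ the uniform convergence $\vp^n\to\vp$ on the compact $\{|\xi|\le M\}\times[0,T]$ lets us send $n\to\infty$ uniformly in $t$. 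Combining the three smallnesses (tail in $v$, Schwartz approximation, tail in $\xi$, and finally $n$) yields \eqref{weak-convergence}.

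\textbf{Part (iii).} Work on the product space $\bR^3\times\bR^3$: the characteristic function of $F_t^n\otimes F_t^n$ equals $\vp^n(\xi,t)\vp^n(\eta,t)$, and the product of uniform-on-compact convergent sequences is again uniform-on-compact convergent. Repeating verbatim the splitting of Part (ii), now with the double cutoff $\chi_R(v)\chi_R(v_*)$ and the Schwartz approximation in $(v,v_*)$, the tail is handled by
\[
|\Psi(v,v_*)|(1-\chi_R(v)\chi_R(v_*))\;\lesssim\; R^{l-2}\bigl(\la v\ra^2\la v_*\ra^l+\la v\ra^l\la v_*\ra^2\bigr),
\]
whose integral against $dF_t^n\otimes dF_t^n$ (and against $dF_t\otimes dF_t$) is uniformly bounded by Cauchy--Schwarz and the uniform second-moment bound. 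The main part is treated by Fourier inversion on $\bR^6$ exactly as in Part (ii), yielding \eqref{weak-convergence-product}.

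\textbf{Main obstacle.} The only subtle point is obtaining the convergences \emph{uniformly in} $t\in[0,T]$; pointwise-in-$t$ convergence is immediate from L\'evy. Uniformity is bought precisely by the uniform-on-compacts convergence of the characteristic functions established in Lemma \ref{equi-continuity-sequence}, which is why the Fourier-analytic formulation (rather than direct weak convergence of measures) is the natural route. The growth restriction $l<2$ is sharp in the sense that it is exactly what the uniform second-moment bound can absorb through the $R^{l-2}$ tail estimate.
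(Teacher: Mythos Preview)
Your proof is correct and, for parts (i) and (ii), follows essentially the same route as the paper: a tail cut controlled by the uniform second-moment bound, followed by a Fourier-side argument on the compactly supported piece. (Your Schwartz approximation of $\psi\chi_R$ is a minor refinement; the paper writes $\hat\psi=\cF(\chi_{R'_\e}\psi)\in\cS$ directly, which tacitly assumes more smoothness of $\psi$ than the statement gives.) In part (i) you get the bound in one step via monotone convergence, whereas the paper first argues $\|\vp(t)-1\|_2<\infty$ through the characterization $\cK^2=\cF(P_2)$ and only then compares with $F_0$; your version is cleaner.

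The genuine difference is in part (iii). The paper restricts $\Psi$ to the compact box $K_{R_\e}$, invokes the Weierstrass approximation theorem to replace $\Psi$ by a finite sum $\sum_j p_j(v)\tilde p_j(v_*)$ of separable polynomials, and then feeds each factor back into part (ii). You instead pass directly to $\bR^6$, observe that the characteristic function of the product measure is $\vp^n(\xi,t)\vp^n(\eta,t)$, and rerun the part (ii) argument on the doubled space. Both work; your approach is more streamlined and avoids the Weierstrass machinery, while the paper's reduction has the structural advantage of making (iii) a formal corollary of (ii). One small remark: in your tail estimate you invoke Cauchy--Schwarz, but since $dF_t^n\otimes dF_t^n$ is a product measure the integral factors outright, and $\int\la v\ra^l\,dF_t^n\le\int\la v\ra^2\,dF_t^n$ suffices.
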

\begin{proof}
        (i)We first claim that $\int|v|^2dF_t<\infty$ for any $t>0$. For any $\d\in(0,1),$
		\begin{align*}
		\sup_{\d<|\xi|<\d^{-1}}\frac{|\vp^n-1|}{|\xi|^2}\le||\vp^n-1||_2\le C\int|v|^2dF^n_t\le C\int|v|^2dF_0.
		\end{align*}
		Letting $\d\to0$, we know there exists $C>0$, such that
		\begin{align*}		
        ||\vp(t)-1||_2=\lim_{\d\to0}\sup_{\d<|\xi|<\d^{-1}}\frac{|\vp-1|}{|\xi|^2}
        =\lim_{\d\to0}\lim_{n\to\infty}\sup_{\d<|\xi|<\d^{-1}}
        \frac{|\vp^n-1|}{|\xi|^2}
        <C\int|v|^2dF_0.
		\end{align*}
	Then, since  it follows from $\cK^2 = \cF(P_2(\RR^3))$ (see \cite{MWY-2015}, ex.) that  $\int|v|^2dF_t$ is bounded, 
for any $\e>0$, we can select $R_{\e,t}>0$ large enough such that
		\begin{align*}
		\int_{|v|\ge R_{\e,t}}|v|^2dF_t<\e.
		\end{align*}
		Notice that $F^n_t\to F_t$ in $\cS'(\bR^3)$, we have
		\begin{align*}
		\int|v|^2dF_t\le\lim_{n\to\infty}\int|v|^2\chi_{\{|v|\le R_{\e,t}\}} dF^n_t+\e
		\le\int|v|^2dF_0+\e.
		\end{align*}
		Letting $\e\to0$, we obtain \eqref{energy-nonincreasing}.
		
		(ii)Since for any $R>0$,
		\begin{align*}
		R^{2-l}\int_{|v|\ge R}|\psi(v)|dF^n_t\lesssim \int_{|v|\ge R}\la v\ra^2dF^n_t\le\int\la v\ra^2dF^n_t=\int\la v\ra^2dF_0,
		\end{align*}
		for any $\e>0$, we can find $R_\e>0$ large enough such that
		$$
		\int_{|v|\ge R_\e}|\psi(v)|dF^n_t\le \e.
		$$
		The same argument can be applied to $F_t$. As a result, we can select $R_\e'>0$ large enough, such that
		\begin{align}\label{far-estimate}
		\int_{|v|\ge R_\e'}|\psi(v)|dF^n_t+\int_{|v|\ge R_\e'}|\psi(v)|dF_t
        \le 2\e.
		\end{align}
Let $\chi_{R'_\e}(v)$ be a $ C_0^\infty (\bR^3)$ function such that  $\chi_{R'_\e} =1$ on  $\{ |v| \le R'_\e\}$.
Then, by setting  $\hat \psi = \cF( \chi_{R'_\e}\psi) \in \cS(\RR^3)$ we have
\begin{align}\label{uniform-time-estimate}
\left|\int \chi_{R'_{\epsilon}} \psi dF^n_t -\int \chi_{R'_\e}\psi  dF_t\right|&=
(2\pi)^3 \left|\int \hat{ \psi}(\xi) (\vp^n(\xi,t) -\vp(\xi,t)) d\xi\right|\\
&\lesssim M^{-1} \int_{|\xi| \ge M} \la \xi \ra  \hat{ \psi}(\xi)d\xi + \sup_{|\xi| \le M} |\vp^n(\xi,t) -\vp(\xi,t)|\nonumber
\end{align}
for any arbitrary $M >0$. Finally, \eqref{weak-convergence} follows \eqref{far-estimate} and \eqref{uniform-time-estimate}.
		
		(iii) For any $R>0$, denote
		$$
		K_R=\{(v,v_*):|v|\le R,|v_*|\le R\}
		$$
		which is compact in $\bR^3\times\bR^3$. For any $\e>0$, since
		\begin{align*}
		\iint_{K_R^c}|\Psi(v,v_*)|dF^n_t(v)dF^n_t(v_*)
		&\lesssim\frac1{\la R\ra^{2-l}} \iint_{K_R^c}\la v\ra^2\la v_*\ra^2 dF^n_t(v) dF^n_t(v_*)\\
		&\le\frac1{\la R\ra^{2-l}} \left(\int\la v\ra^2 dF^n_t(v)\right) ^2\\
		&\le\frac1{\la R\ra^{2-l}} \left(\int\la v\ra^2dF_0(v)\right)^2,\\
		\end{align*}
		we can find $R_\e>0$ large enough, such that
		\begin{align*} 
        \iint_{K_{R_\e}^c}|\Psi(v,v_*)|dF^n_t(v)dF^n_t(v_*) +\iint_{K_{R_\e}^c}|\Psi(v,v_*)|dF_t(v)dF_t(v_*)\le\e.
		\end{align*}
		To show \eqref{weak-convergence-product}, it remains to prove
		\begin{align}\label{weak-convergence-compact}
		\lim_{n\to\infty}\iint_{K_{R_\e}}\Psi(v,v_*)dF^n_t(v)dF^n_t(v_*)=\iint_{K_{R_\e}}\Psi(v,v_*)dF_t(v)dF_t(v_*).
		\end{align}
		By the Weierstrass approximation theorem, for any $\e>0$, there exists a polynomial function $\sum_jp_j(v)\tilde p_j(v_*)$ such that
		$$
		\sup_{(v,v_*)\in K_{R_\e}}\Big|\Psi(v,v_*)-\sum_{j=1}^{N(\e)}p_j(v)\tilde p_j(v_*)\Big|<\e.
		$$
		From (ii), using the standard smoothing argument, we can obtain that for each $p_j$ ( resp. $\tilde p_j$), we have
		$$
		\lim_{n\to\infty}\int p_j(v)\chi_{\{|v|\le R_\e\}}dF^n_t(v)
        =\int p_j(v)\chi_{\{|v|\le R_\e\}}dF_t(v),\mbox{ uniformly to }t\in[0,T],
		$$
		that is, for $\e>0$, we can find $N_{j,\e}( \mbox{ resp. } \tilde N_{j,\e})\in\bN$, such that for all $n>N_{j,\e}$, we have
		$$
		\left|\int_{|v|\le R_\e} p_j(v)dF^n_t(v)-\int_{\{|v|\le R_\e\}} p_j(v)dF_t(v)\right|\le\frac{\e}{\a_jN(\e)},
		$$
		where $$\a_j=\sup_{|v|\le R}|p_j(v)|+\sup_{|v_*|\le R}|\tilde p_j(v_*)|.$$
If  $$N=\max_{1\le j\le N(\e)}\{N_{j,\e},\tilde N_{j,\e}\},$$
		then, for all $n>N$, we have
		\begin{align*}
		&\left|\iint_{K_{R_\e}}\Psi(v,v_*)dF^n_t(v)dF^n_t(v_*)-\iint_{K_{R_\e}}\Psi(v,v_*)dF_t(v)dF_t(v_*)\right|\\
		&\le\sum_{j=1}^{N(\e)}\left|\iint_{K_{R_\e}}p_j(v)\tilde p_j(v_*)dF^n_t(v)dF^n_t(v_*)-\iint_{K_{R_\e}}p_j(v)\tilde p_j(v_*)dF_t(v)dF_t(v_*)\right|+2\e\\
		&\le\sum_{j=1}^{N(\e)}\left|\int_{|v_*|\le R_\e}\tilde p_j(v_*)dF^n_t(v_*)\left(\int_{|v|\le R_\e}p_j(v) dF^n_t(v)-\int_{|v|\le R_\e}p_j(v) dF_t(v)\right)\right|\\
		&\quad+\sum_{j=1}^{N(\e)}\left|\int_{|v|\le R_\e} p_j(v)dF_t(v)\left(\int_{|v_*|\le R_\e}\tilde p_j(v_*) dF^n_t(v_*)-\int_{|v_*|\le R_\e}\tilde p_j(v_*) dF_t(v_*)\right)\right|\\
&\quad 
+2\e\\
		&\le 3\e.
		\end{align*}
		Letting $\e\to0$ leads to \eqref{weak-convergence-product}.
\end{proof}
We are ready to  prove the existence part of Theorem \ref{main-theo} in the soft potential case.
\begin{proof}[Proof of (1) of Theorem \ref{main-theo} in the case $-2 \le \gamma <0$]
As in the proof of Lemma \ref{equi-continuity-sequence}, note that
\[
\int_{\SS^2} b(\sigma\cdot \mathbf{k}) \{(\sigma\cdot \mathbf{k})\mathbf{k} -\sigma\} d\sigma = \lim_{n\rightarrow \infty}
 \int_{\SS^2} b_n(\sigma\cdot \mathbf{k}) \{(\sigma\cdot \mathbf{k})\mathbf{k} -\sigma\} d\sigma =0
\]
and $\nabla(\psi - \psi_*) = O(|v-v_*|)$.
Therefore,  if we put
\begin{align}\label{notation-345}
\Delta \psi & := 
\p_*'+\p'-\p_*-\p - \frac{|v-v_*|}{2}\{\nabla \p(v)- \nabla \p(v_*)\}\{(\sigma\cdot \mathbf{k})\mathbf{k}-\sigma\}\\
&( = O(|v-v_*|^2\theta^2)  \enskip ), \notag
\end{align}
then  we have
\begin{align*}
&\iint\int_{\bS^2}b_n|v-v_*|^\g\p_n \left(|v-v_*|\right)(\p_*'+\p'-\p_*-\p)dF_t^n(v)dF_t^n(v_*)d\s\\
&\qquad\qquad\qquad\qquad -\iint \int_{\bS^2}b|v-v_*|^\g (\p_*'+\p'-\p_*-\p)dF_t(v)dF_t(v_*)d\s\\
& \quad = \iint\int_{\bS^2}b_n|v-v_*|^\g\p_n \left(|v-v_*|\right) \Delta \p dF_t^n(v)dF_t^n(v_*)d\s\\
&\qquad\qquad\qquad\qquad -\iint \int_{\bS^2}b|v-v_*|^\g \Delta \p dF_t(v)dF_t(v_*)d\s\\
& \quad = \iint  \Big(
\Psi_n(v,v_*) - \Psi(v,v_*)\Big)
 dF_t^n(v)dF_t^n(v_*)\\
&\qquad        +\Big(\iint    \Psi(v,v_*)   dF_t^n(v)dF_t^n(v_*) -   \iint \Psi(v,v_*)  dF_t(v)dF_t(v_*) \Big)
\end{align*}
where 
$\Psi_n(v,v_*) $ $=   |v-v_*|^\g\p_n \left(|v-v_*|\right)$ $ \int_{\bS^2} b_n \Delta \p d\s$ and 
$\Psi(v,v_*)  =  |v-v_*|^\g$ \par 
\noindent $\int_{\bS^2} b \Delta \p d\s$. Note that $\Psi_n, \Psi$ 
satisfy the growth condition of Lemma \ref{weak-convergence-lemma}, (iii) with $\ell = 2+\gamma <2$.
Since, for any $R>0$, we have  
\begin{align*}
&\left| \iint_{\{|v| > R\}\cup \{|v_*|>R\}}  \Big(
\Psi_n(v,v_*) - \Psi(v,v_*)\Big)
 dF_t^n(v)dF_t^n(v_*)\right| \\
& \lesssim R^{\ell-2} \Big(\int \la v \ra^2 dF_0(v)\Big)^2,
\end{align*}
the first term converges to $0$, as $n \rightarrow \infty$, because $\Psi_n(v,v_*)$ converges 
to $\Psi(v,v_*)$ uniformly on a compact set of $\RR^3_v\times \RR^3_{v_*}$.
By Lemma \ref{weak-convergence-lemma}, (iii), we also see 
the second term tends to $0$, as $n \rightarrow \infty$.  Thus $F_t$ is a measure valued 
solution. 
Following the same method as in p.292-293 of \cite{villani}, we can show \eqref{definition2-1}.
In fact, by considering  $\psi =\psi_m =  |v|^2 \phi_c(|v|/m) \in C_b^2(\RR^3)$, we have
\begin{align*}
&\int \psi_m  dF_t(v) - \int \p_m dF_0(v) \\
&\quad =\int_0^t \left( \iint \int_{\bS^2} b|v-v_*|^\g (\p_{m,*}^{\prime}+ \p_m^{\prime} -\p_{m,*}
-\p_m)dF_\tau(v)dF_\tau(v_*)d\s\right) d\tau.
\end{align*}
Since it follows from \eqref{justification} that the right hand side is uniformly bounded with respect to $m$,
for any $\eta >0$ there exists a $\delta >0$ independent of $m$ such that
\[
\left|\iiint_{\{|v-v_*| < \delta\} \cup \{|\theta| < \delta\}} b|v-v_*|^\g (\p_{m,*}^{\prime}+ \p_m^{\prime} -\p_{m,*}
-\p_m)dF_t(v)dF_t(v_*)d\s\right|< \eta,
\]
and hence,  the Lebesgue convergence theorem shows
\[
\lim_{m \rightarrow \infty} \left|\int \psi_m  dF_t(v) - \int \p_m dF_0(v)\right| \le \eta,
\]
which yields  \eqref{definition2-1}. 	It remains to show 
$F_t \in C([0,\infty); P_2(\RR^3))$.
Taking the limit of \eqref{uniform-equi-time}, we obtain $F_t \in C([0,\infty); P_0(\RR^3))$.
This, together with \eqref{definition2-1}, shows $F_t \in C([0,\infty); P_2(\RR^3))$,
since it follows from \cite[Lemma 1]{toscani-villani} (see also \cite[Theorem 7.12]{villani3}) that for any $t_0 \in [0,\infty)$ 
\begin{align}\label{use-234}
\lim_{R \rightarrow \infty} \Big( \mathop{\mbox{\rm lim sup}}_{t \rightarrow t_0} \int_{|v| \ge R} |v|^2 dF_t(v)\Big) =0\,.
\end{align}
The property of the moment propagation \eqref{propagation-moments} is a direct consequence
of the Povzner inequality mentioned in the next section.
\end{proof}

We now consider the hard potential case, admitting some results in the next section.
\begin{proof}
[Proof of (1) of Theorem \ref{main-theo} when $\gamma >0$]
We consider the convergence of the difference 
\begin{align*}
&\int_0^t \iint\int_{\bS^2}b_n|v-v_*|^\g\p_n \left(|v-v_*|\right)(\p_*'+\p'-\p_*-\p)dF_\tau^n(v)dF_\tau^n(v_*)d\s\\
&\qquad\qquad -\int_0^t \iint \int_{\bS^2}b|v-v_*|^\g (\p_*'+\p'-\p_*-\p)dF_\tau(v)dF_\tau(v_*)d\s d\tau \\
& \quad = \int_0^t \iint \Psi_n(v,v_*)  dF_\tau^n(v)dF_\tau^n(v_*)d\s d\tau \\
&\qquad\qquad\qquad -\int_0^t \iint \Psi(v,v_*) dF_\tau(v)dF_\tau(v_*)d\s d\tau, 
\end{align*}
where $\Psi$ is the same as in the soft potential case and 
\begin{align*}
\Psi_n(v,v_*) &=   |v-v_*|^\g \phi_c \left(\frac{|v-v_*|}{n}\right) \int_{\bS^2} b_n \Delta \p d\s. 
\end{align*}
Note that
\begin{align*}
&|\Psi(v,v_*)| \lesssim  |v-v_*|^{\g +2-2\varepsilon}
\lesssim \la v \ra^{2-\varepsilon}\la v_* \ra^{2-\varepsilon}{\bf 1}_{\{|v| < 2|v_*| < 4|v|\}}\\
&\qquad \qquad \qquad + \la v \ra^{\g +2-2\varepsilon} {\bf 1}_{\{|v| \ge 2 |v_*|\}}+ 
 \la v_* \ra^{\g +2-2\varepsilon }{\bf 1}_{\{|v_*| \ge 2 |v|\}},\\
&|\Psi_n (v,v_*)| 
\lesssim \la v \ra^{2-\varepsilon}\la v_* \ra^{2-\varepsilon}{\bf 1}_{\{|v| < 2|v_*| < 4|v|\}}\\
&  + \la v \ra^{2-2\varepsilon}
\min\{\la v \ra^\gamma, n^\gamma\}  {\bf 1}_{\{|v| \ge 2 |v_*|\}}+ 
 \la v_* \ra^{2-2\varepsilon}
\min\{\la v_* \ra^\gamma, n^\gamma\} {\bf 1}_{\{|v_*| \ge 2 |v|\}}.\\
\end{align*}
Therefore, it follows from \eqref{limit-hard-strong-bis-n} in the next section that, 
if $t \le T$ and $R >1$, then we have
\begin{align*}
&\left|
\int_0^t \iint_{\{|v| >R\}\cup \{|v_*| >R\}} \Psi_n(v,v_*)  dF_\tau^n(v)dF_\tau^n(v_*)d\s d\tau\right|\\
&\lesssim T R^{-2\varepsilon} \Big(\int \la v \ra^2 dF_0(v)\Big)^2 
+ R^{-\varepsilon} \int_0^T  \Big(
\int  \la v \ra^{2-\varepsilon}
\min\{\la v \ra^\gamma, n^\gamma\}  dF_\tau^n(v) \Big)d\tau \\
&\lesssim R^{-\varepsilon}.
\end{align*}
Thanks to \eqref{limit-hard-strong-bis},  we have similarly
\begin{align*}
\left|
\int_0^t \iint_{\{|v| >R\}\cup \{|v_*| >R\}} \Psi(v,v_*)  dF_\tau (v)dF_\tau (v_*)d\s d\tau\right|
\lesssim R^{-\varepsilon}.
\end{align*}
Consequently, it suffices to show the convergence of 
\begin{align*}
&\int_0^t \iint_{\{|v| \le R\}\cap \{|v_*| \le R\}} \Psi_n(v,v_*)  dF_\tau^n(v)dF_\tau^n(v_*)d\tau \\
&\qquad\qquad\qquad -\int_0^t \iint_{\{|v| \le R\}\cap \{|v_*| \le R\}}  \Psi(v,v_*) dF_\tau(v)dF_\tau(v_*)d\tau \\
&= \int_0^t \iint_{\{|v|, |v_*| \le R\}} \big( \Psi_n(v,v_*) -\Psi(v,v_*) \big)  dF_\tau^n(v)dF_\tau^n(v_*) d\tau\\
&\quad + \int_0^t \Big( \iint_{\{|v|, |v_*| \le R\}} \Psi(v,v_*) dF_\tau^n(v)dF_\tau^n(v_*)  \\
&\qquad \qquad \qquad \qquad - \iint_{\{|v|, |v_*|\le R\}} \Psi(v,v_*) dF_\tau(v)dF_\tau(v_*) \Big) d\tau.
\end{align*}
Estimations of the first and the second terms are quite the same as for the soft potentials. 
In the hard potential case it remains to show the energy conservation law and
the moment gain property, which will be sent to the next section. 
\end{proof}

\subsection{Infinite energy initial datum for the soft potential}\label{ss2}
For the proof of (2) of Theorem \ref{main-theo}, it first show the following lemma, which is a variant
of  (i)  of Lemma \ref{weak-convergence-lemma}.

\begin{lemm}\label{soft-imp}  Let $-2 \le \gamma <0$ and let $c_{\gamma,s}  < \a <2$.
If $F^n_t$ is a solution in $C([0,\infty); P_{\a'})$, $0 < \a' <\a$,  to the Cauchy problem \eqref{cutoff-BE}
for the initial datum $F_0 \in P_\a$ ( given by Theorem  \ref{existence-cutoff-BE} ),
then for any $T >0$ there exists a $C_T >0$ independent of $n$ such that

		\begin{align*}
		\sup_{t \in [0,T]} \int \la v\ra^\a dF^n_t\le  C_T\int \la v\ra^\a dF_0.
		\end{align*}

\end{lemm}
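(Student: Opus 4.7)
The plan is to derive a Gronwall differential inequality for $M^n_\a(t) := \int \la v\ra^\a dF_t^n$ via the weak formulation \eqref{distribution-sense}, with a Povzner-type cancellation supplying the crucial gain in the angular integral. Since $\la v\ra^\a$ is not in $C_b^2(\bR^3)$, I would first test against the truncation $\psi_\d(v) := \la v\ra^\a \la\d v\ra^{-\a}$, which belongs to $C_b^2(\bR^3)$ for every $\d > 0$, and compute
\[
\frac{d}{dt}\int \psi_\d dF_t^n = \frac{1}{2}\iint \Big(\int_{\bS^2} b_n(\psi_\d' + \psi_{\d,*}' - \psi_\d - \psi_{\d,*})d\s\Big) |v-v_*|^\g \psi_n(|v-v_*|)\, dF_t^n(v) dF_t^n(v_*).
\]
The goal is to bound this by $C\big(\int \psi_\d dF_t^n + 1\big)$ uniformly in $n$ and $\d$, where the additive constant is absorbed using the already-controlled energy $\int \la v\ra^2 dF_t^n \le \int \la v\ra^2 dF_0$ furnished by \eqref{energy-conservation}. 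Gronwall's inequality and the monotone convergence $\psi_\d \uparrow \la v\ra^\a$ as $\d \to 0$ would then yield the claim.

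The analytical heart is a sharp non-cutoff Povzner estimate for $\psi(v) = \la v\ra^\a$ with $\a < 2$. I would split the angular integration into the non-grazing part $\theta \gtrsim 1$ and the grazing part $\theta \ll 1$. On the non-grazing part, the elementary bound
\[
|\psi(v') + \psi(v_*') - \psi(v) - \psi(v_*)| \lesssim \la v\ra^\a + \la v_*\ra^\a,
\]
together with the finite integral $\int_{\theta \gtrsim 1} b(\cos\theta) d\theta$ implied by \eqref{integrability-b}, controls the contribution by $|v-v_*|^\g(\la v\ra^\a + \la v_*\ra^\a)$, which pairs against the probability measures to give at most $\int\la v\ra^\a dF_t^n$ plus an energy-controlled cross term. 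On the grazing part, I would exploit the first-order Taylor cancellation $\int_{\bS^2} b_n \{(\s\cdot\vtk)\vtk - \s\}d\s = 0$ recorded in \eqref{justification}, which leaves a remainder of order $|v-v_*|^r \theta^r$; the threshold $\a > c_{\g,s}$ is engineered precisely so that $\theta^r$ is integrable against $b(\cos\theta) \sim \theta^{-2-2s}$ while the accompanying power $|v-v_*|^{\g+r}$ is absorbable into $\la v\ra^\a + \la v_*\ra^\a$, up to the energy factor.

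The main obstacle will be the third branch $\g + 2s \ge 1$ in the definition of $c_{\g,s}$, where a first-order Taylor expansion no longer suffices in the grazing region. There one must carry the second-order Taylor term, together with a refined symmetrization on $\bS^2$, and the resulting balance between $\g$ and $2s-1$ is exactly what produces the denominator $2s-1$ appearing in $c_{\g,s}$ for this regime; similarly, the splitting $s < 1/2$ versus $s \ge 1/2$ in the other two branches reflects whether $\int b(\cos\theta)\theta^2 d\theta$ is finite or must itself be traded against a higher $|v-v_*|$ power. Once the pointwise kernel bound is established, Fubini and the uniform energy bound give $\frac{d}{dt}\int\psi_\d dF_t^n \le C\int\psi_\d dF_t^n + C$ with $C$ independent of $n$ and $\d$; Gronwall integrates this on $[0,T]$; and Fatou's lemma passes the monotone limit $\d \to 0$ through the integral, yielding $\sup_{t \in [0,T]} \int \la v\ra^\a dF_t^n \le C_T \int \la v\ra^\a dF_0$.
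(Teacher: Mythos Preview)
Your overall strategy---Gronwall via the weak formulation, with Povzner-type cancellation in the angular integral---is the paper's strategy too, but the execution has two genuine gaps.

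First, and most seriously, you invoke the energy bound $\int\la v\ra^2 dF_t^n \le \int\la v\ra^2 dF_0$ from \eqref{energy-conservation} to absorb cross terms. But in this lemma $F_0 \in P_\a$ with $\a < 2$, so the energy may be \emph{infinite}; \eqref{energy-conservation} is available in Theorem~\ref{existence-cutoff-BE} only under the extra hypothesis $F_0 \in P_2$. The Gronwall inequality must therefore close on the $\a$-th moment alone, with no recourse to any higher moment. The paper arranges every bound so that the total weight on the right is at most $\la v\ra^\a + \la v_*\ra^\a$, and then uses only that $F_t^n$ is a probability measure to integrate out the other variable, arriving at $\frac{d}{dt}\int\la v\ra^\a dF_t^n \le C\int\la v\ra^\a dF_t^n$ with no additive constant.

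Second, splitting by $\theta$ rather than by $|v-v_*|$ leaves an $n$-dependent bound on the non-grazing piece: your zeroth-order estimate $|\psi' + \psi_*' - \psi - \psi_*| \lesssim \la v\ra^\a + \la v_*\ra^\a$ supplies no power of $|v-v_*|$ to offset the kinetic singularity $|v-v_*|^\g$ near $v=v_*$, and the cutoff $\psi_n(|v-v_*|)$ then contributes a factor $(2n)^{|\g|}$. The paper instead splits according to $|v-v_*| < 1$ versus $|v-v_*| \ge 1$. Near the diagonal it uses the full second-order bound $\Delta\psi_\a = O(\theta^2|v-v_*|^2)$ (with $\Delta\psi_\a$ as in \eqref{notation-345}), so that $|v-v_*|^{\g+2}\le 1$ absorbs the singularity---this is where $\g \ge -2$ is used. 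Away from the diagonal $|v-v_*|^\g \le 1$ is automatic, and one interpolates between the first- and second-order Taylor bounds,
\[
|\Delta\psi_\a| \lesssim \theta^{1+\delta}|v-v_*|^{1+\delta}\big(\la v\ra^{(\a-1)^+} + \la v_*\ra^{(\a-1)^+}\big)^{1-\delta}, \quad \delta\in(0,1],
\]
choosing $\delta$ (just above $2s-1$ when $\g+2s \ge 1$, and analogously in the other regimes) so that $\theta^{1+\delta}$ is $b$-integrable while the resulting weight $|v-v_*|^{1+\delta}\big(\la v\ra^{(\a-1)^+}+\la v_*\ra^{(\a-1)^+}\big)^{1-\delta}$ stays at or below $\la v\ra^\a + \la v_*\ra^\a$. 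Your intuition about the origin of the denominator $2s-1$ in $c_{\g,s}$ is correct, but the interpolation must be tuned so that no moment above $\a$ ever appears.
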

\begin{proof}
		Using the weight function $W_\d(v)$, it is easy to show $\int\la v\ra^\a dF_t^n<\infty$ for any fixed $n$.
First consider the case $s \ge 1/2$. If $\psi_\a = \la v \ra^\a$  then we have 
		\begin{align*}
		\frac d{dt}\int \psi_\a dF^n_t(v)&=\frac 1 2 \iiint B_n(\cdot)
\Delta \psi_\a
d\s dF^n_t(v)dF^n_t(v_*) \\
&= \frac 1 2 \iiint_{|v-v_*| <1}  \qquad + \frac 1 2 \iiint_{|v-v_*| \ge 1} \,,
\end{align*}
where $\Delta \psi_\a$ is defined by the same formula as  \eqref{notation-345}, and it satisfies 
$\Delta \psi_\a = O(|v-v_*|^2\theta^2)$ because $|\psi_\a^{\prime \prime} |\lesssim 1$. 
By using the mean value theorem to $\psi_{\a,*}' -\psi_{\a,*}$ and $\psi_\a' - \psi_\a$,
notice that $|\Delta \psi_\a | \lesssim |v-v_*| \theta \big(\la v \ra^{(\a-1)^+}+ \la v_* \ra^{(\a-1)^+}\big)$, 
where $a^+ = \max \{a, 0\}$ for $a \in \RR$.  Thus, we have
\begin{align}\label{hyoka}
|\Delta \psi_\a| \lesssim \theta^{1+\delta}|v-v_*|^{1+\delta}
\Big(\la v \ra^{(\a-1)^+}+ \la v_* \ra^{(\a-1)^+}\Big)^{(1-\delta)}\,,\enskip \forall 
\delta \in (0, 1].
\end{align}
When $\gamma +2s \ge 1$, choose an $\varepsilon >0$ such that
$\a > \frac{\gamma}{2s-1+\varepsilon} +2 >c_{\gamma,s}( >1)$ and use \eqref{hyoka} with $\delta = 2s-1+\varepsilon$ to the integral for $|v-v_*| \ge1$.  Using \eqref{hyoka} with $\delta =1$   also to 
the integral for $|v-v_*| <1$, we see easily that 
there exists a $C >0$ independent of $n$ such that
\begin{align*}
		\frac d{dt}\int \psi_\a dF^n_t(v)	\le C  \int \psi_\a dF^n_t(v),
\end{align*}		
which leads to the desired estimate.  When $\gamma + 2s <1$, choose an $\varepsilon >0$
such that $\gamma +2s -1 < -\varepsilon$ if the range of $\a >1$ is handled.
The range $1 \ge \a > c_{\gamma,s} $ is easier, by choosing an $\varepsilon >0$ such that 
$\a > c_{\gamma,s} + \varepsilon$.

For the case $0<s <1/2$, we put simply $\Delta \psi_\a = \psi_{\a,*}' + \psi_\a' -\psi_{\a,*} - \psi_\a$.
Then $\Delta \psi_\a =O(|v-v_*|^2\theta)$,
$=O\big( |v-v_*| \theta \big(\la v \ra^{(\a-1)^+}+ \la v_* \ra^{(\a-1)^+}\big)\big)$
 and  $ = O(\la v \ra^\a + \la v_*\ra^\a)$. The latter two yield
\[
|\Delta \psi_\a| \lesssim \theta^{\delta}|v-v_*|^{\delta}
\Big(\la v \ra+ \la v_* \ra \Big)^{(1-\delta)\a + \delta (\a-1)^+}\,,\enskip \forall 
\delta \in (0, 1].
\]
from which the corresponding estimation is obvious.
%
%
\end{proof}

By this lemma we see $F_t^n \in P_\a(\RR^3)$, noting additionally the fact  
that $v_j$ is collision invariant,  when $\a >1$. Using \cite[(1.16)]{MWY-2015}
for $\varphi^n(\xi,t) = \cF(F^n_t)$, we have
\begin{align}\label{base-equiv}
	\int\frac{|1-\re\vp^n(\xi, t)|}{|\xi|^{3+\a}}d\xi 
= c_\a \int|v|^\a dF_t^n(v),
	\end{align}
where $0< c_\a = 2\int\frac{\sin^2({\zeta\cdot\s}/2)}{|\zeta|^{3+\a}}d\zeta, \sigma \in \SS^2$.
Since $\{\varphi^n(\xi,t)\}_{n=1}^\infty$ converges to
$\varphi(\xi,t)= \cF(F_t)$ uniformly in every compact subset of $\RR^3\times [0,\infty)$,
it follows from \cite[(1.16)]{MWY-2015} that for any $t \in (0,T]$
\begin{align*}
c_\a \int |v|^\a dF_t(v) &= \lim_{\delta \rightarrow +0}
\int_{\delta \le |\xi| \le \delta^{-1}}      \frac{|1-\re\vp(\xi, t)|}{|\xi|^{3+\a}}d\xi\\
&= \lim_{\delta \rightarrow +0} \lim_{n \rightarrow \infty} 
\int_{\delta \le |\xi| \le \delta^{-1}}      \frac{|1-\re\vp^n(\xi, t)|}{|\xi|^{3+\a}}d\xi\\
& \le  C_T c_\a \int \la v \ra^\a dF_0(v)\,,
\end{align*}
from which we can see Lemma \ref{weak-convergence-lemma}
holds with the index $2$ replaced by $\a$. Thus, as in the proof of (1) of Theorem  \ref{main-theo},
we see that, for any $0< \a' < \a$, $F_t \in C((0, \infty); P_{\a'}(\RR^3))$ is a measure valued solution.
Since $F_t $ belongs to $P_{\a}(\RR^3)$  uniformly with respect to $t$ in any finite interval in  $[0, \infty)$,
we obtain 
\[
\left| \int \la v \ra^\a dF_t(v) - \int \la v \ra^\a dF_s(v) \right| = O(|t-s|)
\]
by means of the same estimation for $\Delta \psi_\a$ as in the proof of Lemma \ref{soft-imp},
which shows $F_t \in C((0, \infty); P_{\a}(\RR^3))$ on account of \cite[Theorem 7.12]{villani3}.
Thus, the proof of (2) of Theorem \ref{main-theo}
is completed.  

\section{Povzner inequality and the moment gain property}\label{S4}

\setcounter{equation}{0}

\subsection
{ Case: Initial datum $F_0$ with finite $2+\kappa$ moment. }\label{5-1}
In order to show the properties of moment gain and the moment propagation, we consider the case
where  $F_0$ satisfies, for $\kappa >0$,
\begin{align}\label{high-moment}
\int \la v \ra^{2+\kappa} dF_0 < \infty.
\end{align}
Let $F_t^n$ denote the  solution to  the cutoff equation \eqref{ncutoff-hard-BE}, for both hard and potential cases.
Considering $W_\delta(v) = \la v \ra^{2+ \kappa}\la \delta  v \ra^{-2-\kappa}$ as in the proof of Theorem 
\ref{existence-cutoff-BE} and letting $\delta \rightarrow 0$, we have
\begin{equation}\label{k-moment}
\int \la v \ra^{2+\kappa} dF^n_t (v) < \infty \enskip \mbox{for any $t>0$}.
\end{equation}
We will show the upper bound is independent of $n$, more precisely, 
for any fixed $T >0$ there exists a $C_T >0$ independent of $n$ such that
\begin{align}\label{uniform-hard-strong}
\sup_{t \in [0,T] } \int_{\RR^3_v} \la v \ra^{2+\kappa} dF_t^n(v) + \int_0^T \int_{\RR^3_v}
\la v \ra^{2+\kappa} \min \{\la v \ra^\gamma, n^\gamma\} dF^n_{\tau}(v) d\tau \le C_T,
\end{align}
by using the Povzner inequality deeply studied in \cite{mischler-wennberg}
(see also, the proof  of Corollary 1.7 of \cite{MWY-2015}). 
Since $\sigma \in \SS^2$, it can be  written as
\begin{align*}
\sigma 
=\mathbf{k}\cos\theta+\sin\theta(\mathbf{h}\cos\varphi+ \mathbf{i}\sin\varphi),
\enskip \theta \in [0,\pi), \varphi \in [-\pi,\pi)\,,
\end{align*}
by an orthogonal basis in $\RR^3$, 
\begin{align*}
\mathbf{k}=\frac{v-v_*}{|v-v_*|},\ 
\mathbf{i}=\frac{v\times v_*}{|v\times v_*|},\ 
\mathbf{h}=\mathbf{i} \times \mathbf{k}= \frac{
\big((v-v_*)\cdot v\big) v_* -\big((v-v_*)\cdot v_*\big) v}{|v-v_*| |v \times v_*|}.
\end{align*}
It follows from $(v+v_*)\perp \mathbf{i}$ and the definition of $\mathbf{h}$ that
\begin{align*}
|v'|^2=&|\frac{v+v_*}{2}|^2+|\frac{v-v_*}{2}|^2+ \frac{|v-v_*|}{2}(v+v_*)\cdot \sigma\\
\quad =&\frac{1}{4}(2|v|^2+2|v_*|^2)+\frac{|v-v_*|}{2}\Big((v+v_*)\cdot
(\cos\theta \mathbf{k}+\sin\theta\cos\varphi \mathbf{h}) \Big) \\
\quad =&\frac{1}{2} (|v|^2+|v_*|^2)+\frac{\cos\theta}{2}(|v|^2-|v_*|^2)\\
&\ +\frac{\sin\theta\cos\varphi}{2|v \times v_*|}\Big \{ (v+v_*)
\cdot \Big(\big((v-v_*)\cdot v\big) v_* -\big((v-v_*)\cdot v_*\big) v \Big)\Big\} \\
=&\frac{|v|^2(1+\cos\theta) }{2}+\frac{|v_*|^2(1-\cos\theta)}{2}+|v||v_*|\sin\alpha\sin\theta\cos\varphi \,,
\end{align*}
where $\alpha$ is the angle between $v$ and $v_*$.  Therefore, we have
\begin{align}\label{strong-case}
|v'|^2 &= |v|^2 \cos^2 \frac{\theta}{2} +|v_*|^2 \sin^2 \frac{\theta}{2} +|v \times v_*|\sin\theta\cos\varphi\\
&:= Y(\theta) + Z(\theta) \cos \varphi\,.\notag
\end{align}
{It should be noted that 
\[
|Z(\theta)| = |(v-v_*)\times v_*| \sin \theta \le |v-v_*||v_*| \sin \theta\,,
\]
which will be useful to absorb the singularity of the kinetic part in the soft potential case.}
Similarly, we have
\begin{align}\label{strong-case-2}
|v'_*|^2 &= |v_*|^2 \cos^2 \frac{\theta}{2} +|v|^2 \sin^2 \frac{\theta}{2} -|v \times v_*|\sin\theta\cos\varphi\\
&= Y(\pi-\theta) - Z(\theta) \cos \varphi\,.\notag
\end{align}
 
If we set  $\Psi(x) = \Psi_\kappa(x) =(1+x)^{1+\kappa/2}$, it follows from   the cutoff equation \eqref{ncutoff-hard-BE} that
Furthermore we will show that 
To this end, we first notice  
\begin{align}\label{equation-345}
&\frac{d }{dt} \int \la v \ra^{2+\kappa}  dF^n_t(v )=  \frac{1}{2}\iint \Phi_n(|v-v_*|)  K_n(v,v_*)
dF_t^n(v)dF_t^n(v_*),
\end{align}
where
\begin{align*}
 &K_n(v,v_*)= \int_{\SS^2} b_n 
\big\{\Psi(|v'|^2) + \Psi(|v'_*|^2) -\Psi(|v|^2) -\Psi(|v_*|^2)\big\}d\sigma\\
\quad &=2 \int_0^\pi \int_0^\pi b_n(\cos \theta) \big\{\Psi(|v'|^2) + \Psi(|v'_*|^2) -\Psi(|v|^2) -\Psi(|v_*|^2)\big\}\sin \theta
d\theta d \varphi\,.
\end{align*}
It should be noted that the right hand side of \eqref{equation-345} is well defined because of \eqref{k-moment}.
{
Note that
\begin{align*}
&\int^{\pi}_0 \Psi (Y(\theta )+ Z(\theta )\cos\varphi ) \ d\varphi\\
&=(\int^{\frac{\pi}{2}}_0 + \int^{\pi}_{\frac{\pi}{2}}) \ \Psi (Y(\theta ) + Z(\theta ) \cos\varphi ) \ d\varphi\\
&=\int^{\frac{\pi}{2}}_0 \{ \Psi (Y(\theta )+ Z(\theta )\cos\varphi )+\Psi (Y(\theta )- Z(\theta )\cos\varphi )-2\Psi (Y(\theta )) \} \ d\varphi \\
&\qquad \qquad \qquad \qquad \qquad \qquad  \qquad \qquad \qquad \qquad \qquad \qquad \qquad \qquad+\pi \Psi (Y(\theta )),
\end{align*}
and by using integration by parts twice, we have
\begin{align}\label{recall}
&\int_0^\pi \Psi(|v'|^2) d \varphi = \int^{\pi}_0 \Psi (Y(\theta )+ Z(\theta )\cos\varphi ) \ d\varphi \\
&\notag=\pi \Psi (Y) +[\varphi \{ \Psi (Y+ Z\cos\varphi )+\Psi ( Y- Z \cos\varphi )-2\Psi (Y) \} ]^{\frac{\pi}{2}}_0\\
&\notag \quad -\int^{\frac{\pi}{2}}_0 \varphi \{ {\Psi}'(Y+Z \cos\varphi ) - {\Psi}'(Y- Z\cos\varphi ) \} (- Z\sin\varphi )\ d\varphi \\
&\notag =\pi \Psi (Y) + \int^{\frac{\pi}{2}}_0  Z \varphi \sin\varphi  ({\Psi}'(Y + Z\cos\varphi )-{\Psi}'(Y- Z \cos\varphi ))\ d\varphi \\
&\notag =\pi \Psi (Y) +Z[(\sin\varphi - \varphi \cos\varphi ) \{ {\Psi}'(Y+ Z \cos\varphi ) - {\Psi}'(Y-Z \cos\varphi )\} ]^{\frac{\pi}{2}}_0\\
&\notag + Z^2\int^{\frac{\pi}{2}}_0 (\sin\varphi - \varphi \cos\varphi ) \{ {\Psi}''(Y+ Z \cos\varphi ) +{\Psi}''(Y- Z \cos\varphi ) \} \sin\varphi \ d\varphi \\
&\notag =\pi \Psi (Y(\theta))   +Z^2\int^{\frac{\pi}{2}}_0 (\sin\varphi - \varphi \cos\varphi )\sin\varphi \\
& \notag \qquad \times  \{ ({\Psi}''(Y(\theta)+ Z\cos\varphi ) +{\Psi}''(Y(\theta)-Z \cos\varphi ) \} \ d\varphi .
\end{align}
Similarly, we obtain
\begin{align*}
&\int_0^\pi \Psi(|v'_*|^2) d \varphi 
=\pi \Psi (Y(\pi-\theta))   +Z^2\int^{\frac{\pi}{2}}_0 (\sin\varphi - \varphi \cos\varphi )\sin\varphi \\
& \qquad \qquad \times  \{ ({\Psi}''(Y(\pi-\theta)+ Z\cos\varphi ) +{\Psi}''(Y(\pi-\theta)-Z \cos\varphi ) \} \ d\varphi .
\end{align*}
In view of these formula,}
we can divide $K_n(v,v_*)$ into two parts
\[
K_n(v,v_*) = -H_n(v,v_*) + G_n(v,v_*),
\]
where  it follows from the convexity of $\Psi$ that 
\begin{align*}
&-H_n(v,v_*) = 2\pi \int_0^\pi b_n(\cos \theta)\sin \theta \\
&\quad \times \Big [\{ \Psi(|v|^2 \cos^2 \frac{\theta}{2} +|v_*|^2 \sin^2 \frac{\theta}{2}) - 
\cos^2 \frac{\theta}{2} \Psi(|v|^2) - \sin^2 \frac{\theta}{2}\Psi(|v_*|^2 )\} \\
&\qquad + \{ \Psi(|v_*|^2 \cos^2 \frac{\theta}{2} +|v|^2 \sin^2 \frac{\theta}{2}) - \cos^2 \frac{\theta}{2} \Psi(|v_*|^2 )- 
 \sin^2 \frac{\theta}{2} \Psi(|v|^2) \} \Big ]d\theta \le 0.
\end{align*}
On the other hand, if $Z_0 = Z(\theta)/(1+Y(\theta)) \in [0,1]$, then
\begin{align*}
&Z^2\int^{\frac{\pi}{2}}_0 (\sin\varphi - \varphi \cos\varphi )\sin \varphi \{ {\Psi}''(Y+ Z \cos\varphi ) +{\Psi}''(Y- Z \cos\varphi ) \} \ d\varphi \\
&\le O(|\kappa|)
Z^2(1+ Y)^{\kappa/2-1} \int_0^{\pi/2} \varphi^3 \{(1 + Z_0 \cos \varphi)^{\kappa/2-1} +  (1 - Z_0 \cos \varphi)^{\kappa/2-1}\}d\varphi\\
&\left\{
\begin{array}{ll}
\lesssim  Z^2 \lesssim |v|^2|v_*|^2 \theta^{2}, \enskip &\mbox{if}\enskip \kappa <2;\\\\
\lesssim Z^2(1+ Y)^{\kappa/2-1} \lesssim  |v|^2|v_*|^2\big ( \la v \ra^{\kappa-2} +  \la v_* \ra^{\kappa-2} \big)
 \theta^2,\enskip &\mbox{if} \enskip \kappa \ge 2\,.
\end{array}
\right .
\end{align*}
Consequently, 
there exist $C_0 >0$ and $C_1$  independent of $n$ such that 
\begin{align*}
G_n(v,v_*)  \left\{ \begin{array}{ll}
\le C_0 |v|^2 |v_*|^2\int_0^\pi b_n(\cos \theta) \sin^3 \theta d\theta\le C_1 |v|^2 |v_*|^2&\mbox{if}\enskip \kappa <2;\\\\
\le C_1  \big ( |v_*|^2 \la v \ra^{\kappa} +  |v|^2\la v_* \ra^{\kappa} \big) &\mbox{if}\enskip \kappa \ge 2. 
\end{array}
\right .
\end{align*}
When $\gamma<0$ we have
\begin{align}\label{imp-567-soft}
&\int_{\RR^3} \la v \ra^{2+\kappa}  dF^n_t(v )  \\
&\le \int_{\RR^3} \la v \ra^{2+\kappa}  dF_0(v ) \notag  
 + C'_3\Big ( \int_{\RR^3} \la v \ra^2 dF_0(v) \Big)\int_0^t \int_{\RR^3_v}
\la v \ra^{\max\{2, \kappa\}}  dF^n_{\tau}(v) d \tau, \notag
\end{align}
which shows that for any $T >0$ there exists a $C_T >0$ independent of $n$ such that
\[
\sup_{0 \le t \le T} \int_{\RR^3} \la v \ra^{2+\kappa}  dF^n_t(v )  \le C_T.
\]
Taking the limit as $n \rightarrow \infty$, we obtain the property of the moment propagation of
the solution $F_t$ for 
soft potentials. .

From  now on we consider the hard potential case.  Note that there exist 
 $[\theta_1, \theta_2] \subset (0, \pi/2)$ and $c_0>0$ independent of $n$ such that
$ b_n(\cos \theta)\sin \theta  \ge c_0 $ on $[\theta_1, \theta_2] $. If $A, B >0$, $A \ne B$ and if  $F(x) = 
\Psi(A) +\Psi(B) -\Psi(Ax + B(1-x)) - \Psi(A(1-x)+Bx)  $, then it follows from the convexity of $\Psi$ that $F(x)$ takes the maximum at $x =1/2$ and is decreasing with respect to
$|x-1/2|$. Therefore we have 
\begin{align*}
H_n(v,v_*) & \ge 2\pi c_0 (\theta_2 - \theta_1)
 \Big [
  \Psi(|v|^2)  + \Psi(|v_*|^2 ) 
- \Psi(|v|^2 \cos^2 \frac{\theta_1 }{2} +|v_*|^2 \sin^2 \frac{\theta_1}{2}) \\
& \qquad \qquad \qquad - 
 \Psi(|v_*|^2 \cos^2 \frac{\theta_1}{2} +|v|^2 \sin^2 \frac{\theta_1}{2}) \Big ], 
\end{align*} 
which shows that 
there exists a $C_2>0$ independent of $n$ such that
\begin{align}\label{convex}
H_n(v,v_*) \ge C_2 \Big(\la v\ra^{2+\kappa} {\bf 1}_{\{\la v\ra/2\ge \la v_* \ra \}} + \la v_* \ra^{2+\kappa}
 {\bf 1}_{\{\la v_*\ra /2\ge \la v \ra \}}\Big) ,
\end{align} 
because, for $x_1 = \cos^2 \theta_1/2$ and $X = \la v\ra^2/(\la v\ra^2 + \la v_*\ra^2)$, 
\begin{align*}
 &x_1 \Psi(|v|^2) +(1-x_1)\Psi(|v_*|)^2 -  \Psi( x_1 |v|^2 +(1-x_1)|v_*|^2 )
 =\big(\la v\ra^2 + \la v_*\ra^2\big)^{1+\kappa/2}\\
& \times  \Big\{ 
x_1  X^{1+\kappa/2}  + (1-x_1) (1-X)^{1+\kappa/2}   - \Big ( x_1 X  +(1-x_1) (1-X) \Big ) ^{1+\kappa/2} \Big\}
\end{align*}
and $4/5 \le X \le 1$ if $\la v \ra \ge 2 \la v_* \ra$. 

Note that $\int \la v \ra^2 dF_t^n(v) = \int \la v \ra^2 F_0(v)$ and $\int_{|v| \le R} dF_t^n(v) \ge 1/2$ for a sufficiently large $R>0$ 
independent of $n$. 
Integrating the equation \eqref{equation-345}, we have
\begin{align}\label{imp-567}
&\int_{\RR^3} \la v \ra^{2+\kappa}  dF^n_t(v ) + C'_2  \int_0^t \int_{\RR^3_v}
\la v \ra^{2+\kappa} \min \{\la v \ra^\gamma, n^\gamma\} dF^n_{\tau}(v) d \tau \\
&\le \int_{\RR^3} \la v \ra^{2+\kappa}  dF_0(v ) \notag  \\
& \qquad + C'_3\Big ( \int_{\RR^3} \la v \ra^2 dF_0(v) \Big)\int_0^t \int_{\RR^3_v}
\la v \ra^{\max\{2, \kappa\}} \min \{\la v \ra^\gamma, n^\gamma\} dF^n_{\tau}(v) d \tau, \notag
\end{align}
for suitable positive constants $C'_2, C'_3$ independent of $n$, which yields \eqref{uniform-hard-strong}
since, by taking $R_0$ such that  $C'_2 R_0^{\min\{2, \kappa \}} \ge 2 C'_3 \int_{\RR^3} \la v \ra^2 dF_0(v) $,  we have
\begin{align*}
C'_2  \int_{|v| \ge R_0}
&\la v \ra^{2+\kappa} \min \{\la v \ra^\gamma, n^\gamma\} dF^n_{\tau}(v) \\
&\qquad \ge 2C'_3 \Big(\int_{\RR^3} \la v \ra^2 dF_0(v) \Big) \int_{|v| \ge R_0}
\la v \ra^{\max\{2, \kappa\}} \min \{\la v \ra^\gamma, n^\gamma\} dF^n_{\tau}(v) \,.
\end{align*}

Thanks to \eqref{uniform-hard-strong},  we obtain
\begin{align}\label{limit-hard-strong}
\sup_{t \in [0,T] } \int_{\RR^3_v} \la v \ra^{2+\kappa} dF_t(v) + \int_0^T \int_{\RR^3_v}
\la v \ra^{2+\kappa+ \gamma} dF_{\tau}(v) d\tau \le C_T,
\end{align}
by taking the limit. 
By using \eqref{uniform-hard-strong} and \eqref{limit-hard-strong} we can show 
that $F_t$ is a weak measure valued solution in the sense of 
\eqref{definition2-2} when the initial datum satisfies \eqref{high-moment}, similar as in the preceding section.

\bigskip
\subsection{Case: 
the initial  datum $F_0 \in P_2(\RR^3)$. }

We recall Proposition A1 of  \cite{mischler-wennberg}.

\begin{prop}\label{adding-weight}
Let $F$ belong to $P_2(\RR^3)$. Then there exists a concave function $\rho(r) \in C^2$, depending on $F$, such that 
$
\rho(r) \rightarrow \infty$ as $r \rightarrow \infty$, $r \rho(r)$ is convex, 
\begin{align}\label{prime-est}
r  \rho'(r) \lesssim \log r,
\end{align}
 and such that
for all $\varepsilon >0$ and $\alpha \in (0,1)$, $\big(\rho(r) - \rho(\alpha r)\big )r^\varepsilon \rightarrow \infty$ as 
$r \rightarrow \infty$, and such that
\begin{align}\label{plus-moment}
\int \la v \ra^2 \rho(|v|^2) dF(v) < \infty. 
\end{align}

\end{prop}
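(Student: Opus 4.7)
The plan is to construct $\rho$ via a de la Vall\'ee Poussin-type argument tailored to the second-moment measure of $F$, and then realize the resulting growth profile as a $C^2$ concave function with $r\rho(r)$ convex by a careful choice of the underlying dyadic decomposition. Let $d\tilde\mu$ be the pushforward of the finite measure $\la v\ra^2\,dF$ under $v \mapsto |v|^2$, with total mass $Z$ and tail $\bar G(t) := \tilde\mu([t,\infty)) \to 0$. The substitution $\psi(r) := r^2\rho'(r)$ turns the paired structural conditions ``$\rho$ concave and $r\rho(r)$ convex'' into ``$\psi$ non-decreasing with $s\psi'(s) \le 2\psi(s)$''; simultaneously, $r\rho'(r) \lesssim \log r$ becomes $\psi(r) \lesssim r\log r$, the requirement $\rho(r)\to\infty$ becomes $\int^\infty \psi(s)/s^2\,ds = \infty$, and the integrability \eqref{plus-moment} becomes (after Fubini) $\int_0^\infty \psi(t)\bar G(t)/t^2\,dt < \infty$.

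The measure $d\nu(t) := \bar G(t)\,t^{-2}\,dt$ on $[1,\infty)$ has finite total mass, since by Fubini $\int_1^\infty \bar G(t)/t^2\,dt \le \tilde\mu([1,\infty)) \le Z$. Applying de la Vall\'ee Poussin to $\nu$ produces a non-decreasing $\psi_0$ with $\psi_0\to\infty$ and $\int \psi_0\,d\nu < \infty$; explicitly, pick $t_k\nearrow\infty$ with $\nu([t_k,\infty)) \le 2^{-k}$ and set $\psi_0(t) = k$ on $[t_k,t_{k+1})$. One then inflates the $t_k$ and inserts further dyadic subdivisions between consecutive $t_k$'s (so that the ratio between consecutive breakpoints is uniformly bounded, say by $2$) while keeping $\int \psi_0\,d\nu$ finite; this extra regularity ensures both that $\int \psi_0(s)/s^2\,ds = \infty$ (hence $\rho\to\infty$) and that the forthcoming separation estimate is uniform in $r$. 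A standard $C^2$ smoothing then yields a non-decreasing $\psi$ with $\psi_0 \le \psi \le 2\psi_0+1$, $\psi(s) \le s\log s$, and $s\psi'(s) \le 2\psi(s)$.

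Define $\rho(r) := \int_1^r \psi(s)/s^2\,ds$, smoothly extended to $[0,1]$. Then $\rho \in C^2$, is non-decreasing with $\rho(r)\to\infty$, and $r\rho'(r) = \psi(r)/r \lesssim \log r$, giving \eqref{prime-est}. The identities $\rho''(r) = (s\psi'(s) - 2\psi(s))/s^3 \le 0$ and $(r\rho)''(r) = \psi'(r)/r \ge 0$ give, respectively, concavity of $\rho$ and convexity of $r\rho(r)$. The integrability \eqref{plus-moment} is exactly $\int \psi\,d\nu < \infty$. For the separation condition, monotonicity of $\psi$ yields
\[
\rho(r) - \rho(\alpha r) = \int_{\alpha r}^r \psi(s)/s^2\,ds \ge \psi(\alpha r)(1-\alpha)/(\alpha r),
\]
and the dyadic-scale regularity of $\psi$ combined with $\int\psi/s^2 = \infty$ forces $\psi(r)/r^{1-\varepsilon} \to \infty$ for every $\varepsilon > 0$; hence $(\rho(r)-\rho(\alpha r))r^\varepsilon \gtrsim \psi(\alpha r)\,r^{\varepsilon-1} \to \infty$ for every $\alpha\in(0,1)$ and every $\varepsilon>0$.

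The main technical obstacle is the simultaneous fine-tuning in the de la Vall\'ee Poussin step: the lower bound on $\psi$ needed for $\rho\to\infty$ and for separation, the upper bound (integrability against $d\nu$) needed for \eqref{plus-moment}, and the dyadic-scale regularity that prevents ``flat'' regions of $\psi$ capable of spoiling separation, must all hold simultaneously. The compatibility of these requirements relies on the slack afforded by $\bar G \to 0$, which is precisely what the de la Vall\'ee Poussin mechanism converts into a well-behaved $\psi$ adapted to the given $F$.
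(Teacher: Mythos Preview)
Your substitution $\psi(r)=r^2\rho'(r)$ is correct and elegant: it does translate concavity of $\rho$ into $r\psi'\le 2\psi$, convexity of $r\rho$ into $\psi'\ge 0$, the bound \eqref{prime-est} into $\psi(r)\lesssim r\log r$, and $\rho(r)\to\infty$ into $\int^\infty \psi(s)s^{-2}\,ds=\infty$. The gap is in the de la Vall\'ee Poussin step. Your step function $\psi_0=k$ on $[t_k,t_{k+1})$ tends to infinity, but in general only as slowly as the tail $\bar G$ permits---possibly like $\log\log s$ or slower. Inserting dyadic breakpoints and re-indexing gives at best $\psi_0(s)\sim\log_2 s$, for which $\int(\log s)s^{-2}\,ds<\infty$; so the claimed divergence $\int\psi_0(s)s^{-2}\,ds=\infty$ fails and $\rho$ remains bounded. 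The same defect kills the separation argument: your lower bound $(\rho(r)-\rho(\alpha r))r^\varepsilon\gtrsim\psi(\alpha r)r^{\varepsilon-1}$ requires $\psi(r)/r^{1-\varepsilon}\to\infty$, i.e.\ near-linear growth of $\psi$, which nothing in the construction forces. In short, the de la Vall\'ee Poussin mechanism produces a slowly growing function, which is the right size for $\rho$ itself but far too small for $\psi=r^2\rho'$.

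The paper proceeds by applying de la Vall\'ee Poussin directly to $\rho$: choose $r_j\nearrow\infty$ with $\int_{|v|\ge r_j}\la v\ra^2\,dF\le 2^{1-j}$, $r_j>e^j$, and increasing gaps, and let $\rho_1$ be piecewise linear with $\rho_1(r_j)=j$. Then $\rho_1\to\infty$ and $\int\la v\ra^2\rho_1(|v|^2)\,dF<\infty$ are immediate; the double-average smoothing $\rho(r)=r^{-1}\int_0^r y^{-1}\int_0^y(\rho_1(z)+1-1/\log(e+z))\,dz\,dy$ secures convexity of $r\rho$, and $\rho_1(e^j)\le j$ gives \eqref{prime-est}. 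Your framework is repairable, but only by building $\psi(s)=s/\eta(s)$ with a slowly varying $\eta\to\infty$ chosen so that $\int\bar G(s)/(s\eta(s))\,ds<\infty$ while $\int ds/(s\eta(s))=\infty$; that is a genuinely different balance from the one you sketched, and amounts to reproducing the paper's construction in disguise.
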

\begin{proof}
The proposition is the same as Proposition A1 of \cite{mischler-wennberg}, except for the fact that $F$ is a probability measure
and \eqref{prime-est} is explicitly stated. Without loss of generality we may assume that
$\int_{\RR^3} \la v \ra^2 dF(v) =2$. Take $0 = r_0 < r_1 < \cdots < r_j < \cdots$ such that
\[ \int_{|v| \ge r_j} \la v \ra^2 dF(v) \le  2^{-(j-1)}. \]
One  can take $\{r_j \}_{j=1}^\infty$ such that $r_j > e^j$ and $r_{j+1} - r_j > r_j - r_{j-1}$. Let $\rho_1(r)$ be linear
in each $[r_j, r_{j+1})$ and such that $\rho_1(r_j) = j$. It is obvious that
$\rho_1(r)$ is concave, increasing to infinity with $r$, and 
\[
\int_{\RR^3} \la v \ra^2 \rho_1(|v|^2) dF(v)  \le \sum_{j=1}^\infty  2^{-(j-1)}( j +1)< \infty.
\]
Finally we put 
\[
\rho(r) = \frac{1}{r}\int_0^r \frac{1}{y} \int_0^y \Big( \rho_1(z) +1 -\big( \log(e+ z) \big)^{-1} \Big) dz dy
\]
in order to make $r \rho(r)$ convex. Since $\rho(r) \le \rho_1(r) +1$, we have \eqref{plus-moment}, and moreover
\eqref{prime-est} follows from $\rho_1(e^j) \le j$. The rest of the proof is quite the same as the one of Proposition 
A1 of \cite{mischler-wennberg}.
\end{proof}

Applying this proposition to the initial datum $F_0 \in P_2(\RR^3)$, we have $$\int \la v \ra^2 \rho(|v|^2) dF_0(v) < \infty,$$
and moreover for each fixed $n$ and $T >0$
\[
\sup_{0 \le t \le T} \int \la v \ra^2 \rho(|v|^2) dF_t^n (v) < \infty,
\]
by considering $W_\delta(v) = \la v \ra^2 \rho(|v|^2) (\la \delta v \ra^2 \rho(\delta^2|v|^2) )^{-1}$as in the proof of 
Theorem \ref{existence-cutoff-BE} because it follows from the concavity of $\rho(r)$ that $\rho(2r) + \rho(0) \le 2\rho(r)$,
so that 
\[
W_\delta(v') \lesssim W_\delta(v) + W_\delta(v_*).
\]

We follow the proof of Lemma 2.2 iii) of \cite{mischler-wennberg}, in noticing that our $b(\cos \theta)$ is unbounded.
Replace $\Psi_\kappa(x)$ in the preceding procedure by $\Psi(x) = x \rho(x)$. Then
\begin{align*}
\Psi(|v'|^2) &= \Big (Y(\theta) + Z(\theta)\cos \varphi \Big ) \rho(Y(\theta) + Z(\theta)\cos \varphi)\\
& \le \Big (Y(\theta) + Z(\theta)\cos \varphi \Big )\Big( \rho(Y(\theta)) + \rho'(Y(\theta) )Z(\theta) \cos \varphi \Big),
\end{align*}
because the first factor is non-negative and $\rho$ is concave. Therefore,  we have 
\begin{align*}
\int_0^\pi \Psi(|v'|^2) d \varphi 
\le \pi \Psi (Y(\theta))   + \frac{\pi}{2}Z(\theta)^2 \rho'(Y(\theta)),
\end{align*}
instead of \eqref{recall}. Similarly we have 
\begin{align*}
\int_0^\pi \Psi(|v'_*|^2) d \varphi 
\le \pi \Psi (Y(\pi-\theta))   + \frac{\pi}{2}Z(\theta)^2 \rho'(Y(\pi- \theta)). 
\end{align*}
Since $|Z| \le Y$,  it follows from \eqref{prime-est} that 
\begin{align}\label{G-n}
G_n(v,v_*) &\leq C_0 |v|^{2-2\varepsilon} |v_*|^{2-2\varepsilon}\int_0^\pi  \left(\frac{|Z|}{Y}\right)^{2 \varepsilon} b_n(\cos \theta)
\sin^{3-2\varepsilon} \theta  d \theta\\
& \le C_1  |v|^{2-2\varepsilon} |v_*|^{2-2\varepsilon}.\notag
\end{align}
Notice again that there exist 
 $[\theta_1, \theta_2] \subset (0, \pi/2)$ and $c_0>0$ independent of $n$ such that
$ b_n(\cos \theta)\sin \theta  \ge c_0 $ on $[\theta_1, \theta_2] $, and $F(x) = 
\Psi(A) +\Psi(B) -\Psi(Ax + B(1-x)) - \Psi(A(1-x)+Bx)$ 
takes the maximum at $x =1/2$ and is decreasing with respect to
$|x-1/2|$.
Therefore, if $\tau = \sin^2 \theta_1/2 \in (0,1)$  and if $|v_*| \ge 2 |v|$, we have 
\begin{align*}
&H_n(v,v_*) \ge  2\pi \int_{\theta_1}^{\theta_2} b_n(\cos \theta)\sin \theta 
 \Big [\Psi(|v_*|^2 )+ 
  \Psi(|v|^2)     \\
& \quad   -  \Psi(|v|^2 \cos^2 \frac{\theta}{2} +|v_*|^2 \sin^2 \frac{\theta}{2}) -
\Psi(|v_*|^2 \cos^2 \frac{\theta}{2} +|v|^2 \sin^2 \frac{\theta}{2}) \Big ]d\theta \\
&\ge 2 \pi (\theta_2 -\theta_1) c_0 \Big[ |v|^2\rho(|v|^2) + |v_*|^2\rho(|v_*|^2)\\
& \quad -
\Big((1-\tau)|v|^2 + \tau |v_*|^2\Big)\rho(((1-\tau)|v|^2 + \tau |v_*|^2)\\
&\quad -
\Big((1-\tau)|v_*|^2 + \tau |v|^2\Big)\rho(((1-\tau)|v_*|^2 + \tau |v|^2)\Big]\\
&\ge  2 \pi (\theta_2 -\theta_1) c_0 \Big[|v_*|^2 \rho(|v_*|^2) -(|v|^2 + |v_*|^2) \rho( \a |v_*|^2 ) \Big], 
\end{align*}
where $\a = \max\{ \frac{1+3\tau}{4}, 1- \frac{3\tau }{4}\}$, {because 
$\rho$ is increasing and  $(1-\tau)|v|^2 + \tau |v_*|^2 \le (1+3\tau)|v_*|^2/4$.}
Since $|v|^2 \rho(|v_*|^2) \lesssim |v|^{2-2\varepsilon}
|v_*|^{2-2\varepsilon}$ when $|v_*| \ge 2 |v|$ and since the similar result holds when  $v$ and $v_*$ are exchanged,
it  follows from Proposition \ref{adding-weight} that there exist constants $C_2, C_3 >0$ independent of $n$ such that
\begin{align}\label{convex-2}
C_2 \Big( |v|^{2-\varepsilon} {\bf 1}_{\{|v| /2\ge |v_*|\}} + |v_* |^{2-\varepsilon} {\bf 1}_{\{|v_*| /2\ge |v|\}}\Big) \le H_n(v,v_*)
+ C_3 |v|^{2-2\varepsilon}
|v_*|^{2-2\varepsilon},
\end{align} 
which, together with \eqref{G-n},  concludes 
\begin{align}\label{limit-hard-strong-bis-n}
\sup_{t \in [0,T] } \int_{\RR^3_v} \la v \ra^{2} \rho(|v|^2)dF_t^n(v) + \int_0^T \int_{\RR^3_v}
\la v \ra^{2-\varepsilon} \min\{ \la v\ra^\gamma, n^\gamma\}dF^n _{\tau}(v) d\tau \le C_T,
\end{align}
and by taking $n \rightarrow \infty$, we get 
\begin{align}\label{limit-hard-strong-bis}
\sup_{t \in [0,T] } \int_{\RR^3_v} \la v \ra^{2} \rho(|v|^2)dF_t(v) + \int_0^T \int_{\RR^3_v}
\la v \ra^{2-\varepsilon + \gamma} dF_{\tau}(v) d\tau \le C_T,
\end{align}
similar to \eqref{limit-hard-strong}. As proved in the preceding section, the above two estimates
show
the 
existence of the weak solution for the initial datum $F_0 \in P_2(\RR^3)$ in the  hard potential  case.
 
The proof of the energy conservation law \eqref{definition2-1} for the hard potentials is due to the reverse Povzner
inequality, as stated in p.293 of \cite{villani}. If one considers $\Psi_\kappa (x) =(1+x)^{1+\kappa/2}$ with $\kappa <0$, then 
it follows from the concavity of $\Psi_\kappa$ that  $-H(v,v_*) \ge 0$ and 
\[
G(v,v_*) \le O(|\kappa|) |v|^2|v_*|^2,
\]
so that
\[
\int \la v \ra^{2+\kappa} dF_t(v) \ge \int \la v \ra^{2+\kappa} dF_0(v) - O(|\kappa|)\Big(\int |v|^2 dF_0(v)\Big)^2,
\]
where $H, G$ are defined in the subsection \ref{5-1} with $b_n$ there replaced by $b$. 
Letting $\kappa \rightarrow -0$, we get \eqref{definition2-1}, since the opposite inequality is obvious.

Finally, we remark the existence of a weak solution with moments gain of any order at a fixed  $T>0$   for 
the initial datum $F_0 \in P_2(\RR^3)$. Indeed, by \eqref{limit-hard-strong-bis} we see that
\[
\int_{\RR^3_v}
\la v \ra^{2-\varepsilon + \gamma} dF_{\tau}(v)  < \infty
\]
for almost all $\tau \in [0,T]$. Choose a $t_1 >0$ arbitrarily close to $0$ as an initial time. Then we get 
\[
\int_{t_1}^T \int_{\RR^3_v}
\la v \ra^{2-\varepsilon+ 2\gamma} dF_{\tau}(v) d\tau < \infty.
\]
We can repeat the same procedure again, and so on,such  that
\[
0< t_1 < t_2 < \cdots < t_n < \cdots <T.
\]
Using the method in the subsection \ref{5-1} (propagation of the moments), we see that
for any $\ell >0$
\[
\int \la v \ra^\ell dF_T(v) < \infty.
\]

\bigskip
\section{Smoothing effect of the singular cross section}\label{S5} 
\setcounter{equation}{0}

{We start the following variant of Theorem 1.2 in Fournier \cite{Fournier-2015} . 
\begin{prop}\label{fournier}
Let $B(\cdot)$ satisfy \eqref{cross-section}-\eqref{cross-section2} and let $-2\le \gamma \le 2$.
If $\gamma \le 0$ and if $F_0 \in P_2(\RR^3)$ is not a single Dirac mass, then for any weak solution in the sense of Definition
\ref{weak-solution} to the Cauchy problem \eqref{BE}-\eqref{initial} we have 
\[ 
\mbox{\rm supp}\, \, F_t = \RR^3 \enskip \mbox{for all} \enskip  t >0\,.
\]
When $\gamma >0$, if, for $\kappa > \gamma$,  $F_t$ satisfies 
\begin{align*}
\forall T >0, \enskip \exists C_T >0 \enskip \mbox{such that} \enskip 
\sup_{t \in [0,T] } \int_{\RR^3_v} \la v \ra^{2+\kappa} dF_t(v)  \le C_T
\end{align*}
then we have the same conclusion.
\end{prop}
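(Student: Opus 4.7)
My plan is to argue by contradiction, combining three ingredients: a localized version of the weak formulation \eqref{definition2-2}, the non-cutoff lower bound \eqref{cross-section2}, and the non-Dirac hypothesis on $F_0$. The approach is PDE-based rather than probabilistic, and runs uniformly for both the soft and hard potential cases; the $2+\kappa$ moment assumption in the hard case enters only in one localized estimate.

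Suppose $\mathrm{supp}\,F_{t_0}\neq\RR^3$ for some $t_0>0$ and pick $v_0\notin\mathrm{supp}\,F_{t_0}$ together with $r>0$ such that $B(v_0,4r)\cap\mathrm{supp}\,F_{t_0}=\emptyset$. Since $F_t\in C([0,\infty);P_2(\RR^3))$ by Definition \ref{weak-solution}, one may choose $t_1<t_0$ so that $F_t(B(v_0,2r))\le\epsilon$ for $t\in[t_1,t_0]$, with $\epsilon$ arbitrarily small. Take $\psi\in C_c^\infty(B(v_0,2r))$, $0\le\psi\le 1$, $\psi\equiv 1$ on $B(v_0,r)$, and apply \eqref{definition2-2} over $[t_1,t_0]$. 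For pairs with $v,v_*\notin B(v_0,2r)$ the loss term $\psi(v)+\psi(v_*)$ vanishes, leaving only the non-negative gain $\psi(v')+\psi(v'_*)$. The contribution of pairs with $v$ or $v_*\in B(v_0,2r)$ is controlled by $|v-v_*|^\gamma\lesssim\la v\ra^\gamma+\la v_*\ra^\gamma$ together with the available moment bound (automatic for $\gamma\le 0$, and by hypothesis for $\gamma>0$, which is the only role of $\kappa>\gamma$), giving an $O(\epsilon)$ term. Combined with $|F_{t_0}(\psi)-F_{t_1}(\psi)|\le 2\epsilon$, this forces
\[
\int_{t_1}^{t_0}\iint\int_{\SS^2} b(\cos\theta)|v-v_*|^\gamma\bigl(\psi(v')+\psi(v'_*)\bigr)\,d\sigma\,dF_\tau\,dF_\tau\,d\tau\le C\epsilon,
\]
where the spatial integrals are restricted to $v,v_*\notin B(v_0,2r)$.

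Letting $\epsilon\to 0$, and using that $b(\cos\theta)\sin\theta$ is strictly positive on a set of positive measure in $\theta$ by \eqref{cross-section2}, we deduce that for a.e.\ $\tau$ near $t_0$ and $F_\tau\otimes F_\tau$-a.e.\ pair $(v,v_*)\in(\mathrm{supp}\,F_\tau\setminus B(v_0,2r))^2$, the Boltzmann sphere $S(v,v_*)$ of centre $(v+v_*)/2$ and radius $|v-v_*|/2$ misses $B(v_0,r)$. Since $F_0$ is not a single Dirac mass and $F_\tau$ cannot be a Dirac mass either (such would force zero energy via momentum conservation and \eqref{definition2-1}), $\mathrm{supp}\,F_\tau$ contains at least two distinct accumulation points $a,b$. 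Perturbing $(v,v_*)$ near $(a,b)$ within the support, the pair $((v+v_*)/2,|v-v_*|)$ moves in a two-parameter family whose image contains an open subset of $\RR^3\times(0,\infty)$; hence $S(v,v_*)$ can be made to pass arbitrarily close to $v_0$, contradicting the sphere-avoidance conclusion.

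The principal obstacle is the passage in the last step from an ``$F_\tau\otimes F_\tau$-a.e.''\ statement to a statement valid on a topologically dense subset of the support: this will be handled by selecting exceptional null sets uniformly over a countable dense family of test data (rational $v_0$ and $r$) and intersecting them, so that the sphere-avoidance conclusion holds everywhere on $(\mathrm{supp}\,F_\tau)^2$. Once that is in place, the geometric perturbation argument reduces to the observation that $(v,v_*)\mapsto((v+v_*)/2,|v-v_*|)$ has full-rank differential whenever $v\ne v_*$. The hard-potential hypothesis $\kappa>\gamma$ is used only to bound the loss and mixed terms in the $O(\epsilon)$ step, so no structural new difficulty arises for $\gamma>0$.
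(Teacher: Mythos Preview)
The final geometric step has a genuine gap. Your assertion that $\mathrm{supp}\,F_\tau$ contains two distinct \emph{accumulation} points is unjustified---a non-Dirac probability measure can have finite support, e.g.\ $\tfrac12(\delta_a+\delta_b)$---so ``perturbing $(v,v_*)$ within the support'' may be vacuous, and the full-rank differential of $(v,v_*)\mapsto((v+v_*)/2,|v-v_*|)$ is irrelevant when the domain has no interior. More fundamentally, your sphere-avoidance conclusion concerns only the single fixed ball $B(v_0,r)$, and that alone cannot yield a contradiction: a two-point support $\{a,b\}$ with $S(a,b)$ disjoint from $B(v_0,r)$ satisfies it. Fournier's actual argument (to which the paper defers after its Step~2) needs the stronger statement that $\mathrm{supp}\,F_{t_0}$ is \emph{closed under the sphere operation}, i.e.\ $S(v,v_*)\subset\mathrm{supp}\,F_{t_0}$ for all $v,v_*$ therein, obtained by letting the test ball range over every point of the complement; it then invokes an iterative geometric lemma showing that any closed set with at least two points that is sphere-closed must be all of $\RR^3$. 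Your countable-dense patch does not recover this: the full-measure set of times $\tau$ at which sphere-avoidance holds need not include $t_0$, and at such a $\tau$ the conclusion $S(v,v_*)\subset\mathrm{supp}\,F_{t_0}$ for $v,v_*\in\mathrm{supp}\,F_\tau$ cannot be iterated (the outputs lie in the wrong support).

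The paper avoids the time-averaging detour altogether by first proving that $t\mapsto\int\psi\,dF_t$ is $C^1$ on $[0,\infty)$ for every $\psi\in C_b^2$, using \eqref{justification}, the continuity in $P_2$, and the $2+\kappa$ moment when $\gamma>0$. Since this nonnegative function vanishes at $t_0$ for any $0\le\psi\in C_0^2(B(v_0,\varepsilon))$, its derivative there is zero, which gives the gain-term identity at $t_0$ itself for every such ball at once. A minor further point: in your $O(\epsilon)$ step, writing ``$|v-v_*|^\gamma$'' ignores the non-integrable angular factor; after the $\sigma$-integral and \eqref{justification} the correct bound is $C_\psi|v-v_*|^{\gamma+2}$, which with one variable in $B(v_0,2r)$ costs a $(2+\gamma)$-moment in the other---this is exactly why $\kappa>\gamma$ is the right hypothesis for $\gamma>0$, though your phrasing suggests the cancellation mechanism has been overlooked.
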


\begin{proof}
We first consider the case $\gamma \le 0$.
For $\psi(v) \in C_b^2(\RR^3)$ we see that 
the map 
$$t \mapsto   \int_{\bR^3}\int_{\bR^3}\int_{\bS^2}b(\cdot)|v-v_*|^\g (\psi(v_*')+\psi(v')
		-\psi(v_*)-\psi(v))dF_t(v)dF_t(v_*)d\s 
$$
belongs to $C([0,\infty))$ under the assumption  $F_0 \in P_2(\RR^3)$. 
In fact, since $2 + \gamma \le 2$,  it follows from \eqref{use-234} and \eqref{justification} that,
for any $\eta >0$ there exists $R >0$  independent of $t$ such that 
\begin{align*}
&\Big|
\iint_{\{|v| \ge R\} \cup \{|v_*| \ge R\}}  \int_{\bS^2} 
b(\cdot)|v-v_*|^\g (\psi'_* +\psi'
		-\psi_* -\psi )dF_t(v)dF_t(v_*)d\s \Big|  < \eta.
\end{align*}
Therefore, the similar way to \eqref{weak-convergence-product} leads us to the desired continuity, from which and \eqref{definition2-2}
it follows that the map $t \mapsto \int_{\bR^3}\psi(v)dF_t$ belongs to $C^1([0, \infty))$.
When $\gamma >0$ we have the same $C^1$ regularity under the above additional condition. 

The rest of the proof is  the similar as
the one of  \cite[Theorem 1.2]{Fournier-2015}. Indeed, 
it follows from conservations of momentum and energy in Definition
\ref{weak-solution} that 
\[
\int_{\RR^3} |v-v_0|^2 dF_t(v) = \int_{\RR^3} |v-v_0|^2 dF_0(v) >0, \enskip \forall v_0 \in \RR^3,
\] 
which just implies Step 1 in the proof of \cite[Theorem 1.2]{Fournier-2015}, that is, 
$F_t$ is not a single Dirac mass for all $t \ge 0$.  
Let  $B(v_0, \varepsilon)$ be an open ball with a radius $\varepsilon >0$ centered at a 
$v_0 \in \RR^3$ and  suppose that 
$\int_{B(v_0, \varepsilon)} dF_t(v) =0 
$ for a $t >0$.  Then, by \eqref{definition2-2} we have,  
for any $0 \le \psi \in C_0^2(B(v_0, \varepsilon))$, 
\begin{align*}
\iint_{\bR^3 \times \bR^3}\int_{\bS^2}&b(\cdot)|v-v_*|^\g (\psi(v_*')+\psi(v')
		)dF_t(v)dF_t(v_*)d\s  \\
&= 2 \pa_t \big(\int_{\bR^3}\psi(v)dF_t(v)\Big) =0\,,
\end{align*}
because  the map 
$s \mapsto \int_{\bR^3}\psi(v)dF_s$ belongs to $C^1([0, \infty))$ and $ \int_{\bR^3}\psi(v)dF_s \ge 0$.
Since $b >0$ in $(0, \pi/2)$ and $0 \le \psi \in C_0^2(B(v_0, \varepsilon))$ can be taken arbitrarily, we see that
\begin{align*}
\iint_{\bR^3 \times \bR^3}&\int_{\bS^2}{\bf 1}_{\{v'(v,v_*, \sigma) \in B(v_0, \varepsilon)\}}\\
&\qquad \times {\bf 1}_{\{v \ne v_*, 0<  (v-v_*) \cdot \s <|v-v_*| \}}
dF_t(v)dF_t(v_*)d\s =0.
\end{align*}
Thus, Step 2 is also similar. Other Steps are quite the same as in the proof of \cite[Theorem 1.2]{Fournier-2015}.
\end{proof}

}

The following coercivity estimate estimate is a key for the proof of Theorem \ref{smoothing-1st-try}.

\begin{prop}\label{first-coercivity}
Let $B(\cdot)$ satisfy \eqref{cross-section}-\eqref{cross-section2} and assume  $2 \ge \gamma > -2s$.
{Let $F_0 \in P_2(\bR^3)$ be not equal to a single Dirac mass. For any $t_0 >0$ there exists a 
weak solution 
$F_t \in C([0,\infty);P_2(\bR^3))$ to the Cauchy problem \eqref{BE}-\eqref{initial} 
satisfying the following;
there exist $T>0$ and $c_0, C >0$ such that for any $f \in \cS(\RR^3)$ and any $t \in [t_0,t_0 +T]$
\begin{align}\label{coer-non-deg}
-\Big( Q(F_t, f), f \Big) &= \iiint
\Phi\,b \,\,  ( f^2 -f'f) dv d\sigma dF_t(v_*)\\
\notag &\ge c_0 \|\la v \ra^{\gamma/2} f\|^2_{H^s} -C \|\la v \ra^{\gamma/2} f\|^2_{L^2}\,.
\end{align}
}
\end{prop}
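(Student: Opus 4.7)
The plan is to reduce the measure-valued coercivity to the standard coercivity for densities, using Proposition \ref{fournier} to supply the necessary non-degeneracy of $F_t$ on a time interval $[t_0, t_0+T]$.

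First I would pick the appropriate solution $F_t$: for $\gamma \le 0$ use the solution in $C([0,\infty); P_2(\bR^3))$ given by Theorem \ref{main-theo}; for $\gamma > 0$ use the solution with moment gain from Section \ref{S4}, so that $\int \la v \ra^{2+\kappa} dF_t < \infty$ for some $\kappa > \gamma$ and all $t \ge t_0/2$. Proposition \ref{fournier} then yields $\mathrm{supp}\, F_t = \bR^3$ for every $t > 0$. Fixing $t_0 > 0$, pick two points $v_1 \ne v_2$ and $\delta > 0$ with $|v_1 - v_2| > 3\delta$; then $F_{t_0}(B(v_i, \delta)) > 0$ for $i=1,2$. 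By weak lower-semicontinuity of $t \mapsto F_t(U)$ on open sets along the $P_2$-continuous trajectory, there exist $T > 0$ and $c_1 > 0$ such that $F_t(B(v_i, \delta)) \ge c_1$ for all $t \in [t_0, t_0+T]$ and $i = 1, 2$.

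Following the splitting in AMUXY and HMUY (cited in the paper), I would write
\begin{align*}
-\bigl(Q(F_t, f), f\bigr) = \frac12 \iiint b(\cos\theta) |v-v_*|^\gamma \bigl(f(v') - f(v)\bigr)^2 dv\, d\sigma\, dF_t(v_*) + \mathcal{R}(F_t, f),
\end{align*}
where the remainder coming from the cancellation lemma satisfies $|\mathcal{R}(F_t, f)| \lesssim \|\la v \ra^{\gamma/2} f\|^2_{L^2}$, using $\int_0^{\pi/2} b(\cos\theta) \sin^{3-2\varepsilon} \theta\, d\theta < \infty$ and the second moment bound on $F_t$. For the main dissipation functional, I would apply the Fourier/Bobylev coercivity from AMUXY/MUXY-DCDS for each $v_*$ in one of the balls $B(v_i, \delta)$: after translation by $v_*$, the angular operator $\int_{\bS^2} b (f'-f)^2 d\sigma$ controls the $H^s$ seminorm, and integrating in $v$ with the weight $|v-v_*|^\gamma$ produces a bound of the type $\|\la v-v_*\ra^{\gamma/2} f\|^2_{H^s} - C\|\la v-v_*\ra^{\gamma/2} f\|^2_{L^2}$. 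For any $v \in \bR^3$, at least one of $|v-v_1|$, $|v-v_2|$ is $\gtrsim \la v\ra$, so using the two balls alternately and taking the maximum over $i = 1, 2$, the weight $|v-v_*|^\gamma$ can be replaced by $\la v \ra^\gamma$ modulo constants. The hypothesis $\gamma > -2s$ ensures that $|v-v_*|^\gamma$ is integrable against the angular singularity $\theta^{-2-2s}$ near $v = v_*$, so the $H^s$ lower bound survives.

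The main obstacle is that the classical coercivity proofs assume a pointwise lower bound of the form $g(v_*) \ge c\, \mathbf{1}_\Omega(v_*)$ on a set of positive Lebesgue measure, whereas here $F_t$ is only a probability measure with no density. The substitute is the two-ball non-concentration from Proposition \ref{fournier}, and the uniformity in $t$ over $[t_0, t_0+T]$ is provided by the $P_2$-continuity. Carrying this non-degeneracy through the Fourier-analytic reduction — essentially, performing the AMUXY estimate while integrating against $dF_t(v_*)$ rather than against a density — is the most delicate step, and is where the preparation in the first paragraph (uniform mass on two separated balls) is used in an essential way.
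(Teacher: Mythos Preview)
Your overall strategy---invoke Proposition~\ref{fournier} for full support, split off the cancellation-lemma remainder, and reduce to a Fourier/Bobylev coercivity---matches the paper. The gap is in the non-degeneracy you extract. Mass $\ge c_1$ on two separated balls does \emph{not} yield the full $H^s$ coercivity: nothing in your hypothesis rules out that the relevant restricted measure is $c_1\delta_{v_1}+c_1\delta_{v_2}$, and for any measure $G$ concentrated on the line through $v_1,v_2$ the Bobylev symbol $\int_{\bS^2} b\,(\hat G(0)-|\hat G(\xi^-)|)\,d\sigma$ grows only like $|\xi|^{s}$, not $|\xi|^{2s}$, when $\xi$ is parallel to $v_1-v_2$, since then $(v_1-v_2)\cdot\xi^-=O(|\xi|\theta^2)$ rather than $O(|\xi|\theta)$. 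Relatedly, your sentence ``after translation by $v_*$, the angular operator $\int_{\bS^2} b\,(f'-f)^2\,d\sigma$ controls the $H^s$ seminorm'' is false for a fixed $v_*$: at a single $v_*$ the angular integral only gives anisotropic regularity transverse to $v-v_*$, and one needs the $v_*$-measure to be spread off every straight line to recover the isotropic $H^s$ norm.

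The paper uses the full conclusion $\Supp F_{t_0}=\RR^3$ precisely to guarantee that \emph{every} truncation $\chi_B F_{t_0}$ (by a ball or punctured ball of positive radius) is not concentrated on a straight line; this is the hypothesis under which the Maxwellian-case coercivity of \cite{MY} applies. The weighted functional $\cC_\gamma$ is then reduced to $\cC_0$ by localizing in $v$ rather than in $v_*$: a far-field cutoff $\varphi_R$ on whose support $|v-v_*|\sim\la v\ra$ when $v_*\in B(R)$, plus a finite cover of $\{|v|\le 4R\}$ by small balls $A_j$, on each of which the $v_*$-integration is restricted to a punctured ball $B_j(R,r_0)$ so that $|v-v_*|$ is pinched between fixed positive constants. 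Uniformity in $t$ comes not from lower-semicontinuity of ball masses but from the uniform continuity of $\varphi(t,\xi)=\cF(F_t)(\xi)$ on compacts, which propagates the Fourier-side lower bound $\wh{\tilde\chi_j F_{t_0}}(0)-|\wh{\tilde\chi_j F_{t_0}}(\xi^-)|\ge\kappa_0$ from $t_0$ to $[t_0,t_0+T]$. (A minor side remark: for $\gamma<0$ your remainder bound should read $\|f\|^2_{H^{-\gamma/2}_{\gamma/2}}$ via Hardy's inequality, then absorbed by interpolation using $-\gamma/2<s$.)
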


\begin{proof}
{When $\gamma >0$, in view of the moment gain property,  we may assume $F_t$ satisfies \eqref{limit-hard-strong}
with an interval $[0, T]$ replaced by $[t_0/2, T+t_0]$ with $\kappa >\gamma$. 
It follows from Proposition \ref{fournier} that 
\begin{align}\label{only-this}
\mbox{\rm supp}\, \, F_{t_0}  = \RR^3 \,.
\end{align}
For the sake of the simplicity of the notation, we write $0$ instead of $t_0$, in what follows.
}

For $G \in P_2$, put
$$
\cC_\gamma(G,\, f) = \iiint_{\RR^3 \times \RR^3 \times \SS^2}  b(.) | v-v_*|^\gamma  (f'-f)^2dv d\sigma dG(v_*),
$$
and
note that
$$
\big(Q(G,\, f),\,f\big) =-\frac{1}{2} \cC_\gamma(G,\, f)
+ \frac{1}{2}\iiint \Phi\,b \,\,  ( f'^2 -f^2) dv d\sigma dG(v_*)\,.
$$
It follows {}from the Cancellation Lemma and Remark 6 in \cite{ADVW}
that
\begin{align}\label{remainder-213}
&\left|\iiint \, b |v-v_*|^\gamma \,\, (f^2 -f'^2) dv d\sigma dG(v_*)\right|
\lesssim
\left|\iint |v-v_*|^\gamma \,\, f^2   dv dG(v_*)\right| \\
&\lesssim  \int \la v_* \ra^{|\gamma|} dG(v_*) \|f\|^2_{H^{(-\gamma/2)^+}_{\gamma/2}}\,\,,\notag
\end{align}
where the last inequality  in the case $\gamma \ge 0$ is trivial. While for $\gamma <0$,
this follows {}from the fact that
\begin{align}\label{useful-1}
|v-v_*|^\gamma \lesssim \la v \ra^\gamma\{ {\bf 1}_{|v-v_*| \ge \la v \ra/2}
+ {\bf 1}_{|v-v_*| < \la v \ra/2} \la v_*\ra^{-\gamma} |v-v_*|^\gamma\},
\end{align}
and the Hardy inequality $\sup_{v_*} \int |v-v_*|^\gamma |g(v)|^2 dv \lesssim \|g\|^2_{H^{-\gamma/2}}$.

For the proof of the proposition, it now suffices to consider only the quantity $\cC_\gamma(G,\, f)$ with $G = F_t$
because one can apply the interpolation inequality
\[
\|f\|^2_{H^{(-\gamma/2)^+}_{\gamma/2}}  \le \varepsilon \|f\|^2_{H^s_{\gamma/2}} + C_\varepsilon \|f\|^2_{L^2_{\gamma/2}}
\]
to \eqref{remainder-213} in view of $\gamma >-2s$. 
{The case $\gamma=0$ is easy.  
In fact, it follows from Proposition 1
of \cite{ADVW} that
\begin{align*}
 (2\pi)^{3}\cC_0(G,\, f)  &=\iint b\Big(\frac{\xi}{|\xi|} \cdot \sigma\Big) \psi(0)(|h(\xi)|^2 + |h(\xi^+)|^2)d \sigma d \xi \\
&- 2 \mbox{Re} \iint b\Big(\frac{\xi}{|\xi|} \cdot \sigma\Big) \psi(\xi^{-})h(\xi^+)\overline{h(\xi)}d \sigma d \xi\\
&\ge \int  \Big( \int b\Big(\frac{\xi}{|\xi|} \cdot \sigma\Big) (\psi(0) -|\psi(\xi^-)| ) d\sigma \Big) |h(\xi)|^2 d\xi
\end{align*}if $\psi(\xi) = \cF(G), h(\xi) = \cF(f)$.  
Consider $G = F_t, t \in [0,T]$. 
Since $\mbox{\rm supp}\, \, F_{0}  = \RR^3$, we may apply the method in the subsection 2.1 of \cite{MY}, developed 
under the condition that the initial measure $F_0$ is not concentrated on the straight line.
Then we have the desired estimate for $0 \le t \le T$ with a sufficiently small
$T >0$, by using the continuity of $\psi(t, \xi) = \cF(F_t)$.


We now consider the case $\gamma \ne 0$, following the argument used in the proof of Proposition 2.1 of \cite{amuxy7}
(see also  the one of Lemma 2
of \cite{ADVW}).
Set  $B(R) = \{v \in \RR^3\,;\, |v| \le R\}$ for $R >0$ and  $B_0(R,r) =
\{ v \in B(R)\,; \, |v-v_0| \ge r\}$ for  a $v_0 \in \RR^3$ and $r \ge 0$.
For a moment, we concentrate to handle $C_\g(F_0, f)$.
By means of  $\mbox{\rm supp}\, \, F_{0}  = \RR^3$,   for any 
positive constants $R > 1 > r_0$  we see that
\begin{equation}\label{uni-g}
\mbox{ the positive measure 
$\chi_{B_0(R, r_0)} F_0$ is not concentrated on the straight line, 
}
\end{equation}
where $\chi_A$ denotes a characteristic function of  the set $A \subset \RR^3$. }
Let $\varphi_R(v)$ be a non-negative  smooth function not greater  than one, which is 1
for $| v| \ge 4R$ and $0$ for $| v| \le {2R}$.
In view of
\[
\frac{\la v \ra}{4} 
\le|v-v_*|\le 2 \la v \ra \enskip
\mbox{on supp $(\chi_{B(R)})_* \varphi_R$}\,,
\]
we have
\begin{align*}
&4^{|\gamma|}\Phi(|v-v_*|) (f'-f)^2
\ge \big( \chi_{B(R)}\big)_* \big(\la v \ra^{\gamma/2} \varphi_R\big)^2(f'-f)^2 \\
&\ge \big( \chi_{B(R)}\big)_* \Big[\frac{1}{2} \Big(\big(\la v \ra^{\gamma/2} \varphi_R f \big)'-
\la v \ra^{\gamma/2} \varphi_R f \Big)^2
 -\Big( \big(\la v \ra^{\gamma/2} \varphi_R\big)'
 - \la v \ra^{\gamma/2} \varphi_R  \Big)^2 {f' }^2 \Big]\,.
\end{align*}
It follows {}from the mean value theorem that for a $\tau \in (0,1)$
\begin{align*}
\left|\big(\la v \ra^{\gamma/2} \varphi_R\big)'
 - \la v \ra^{\gamma/2} \varphi_R \right| &\lesssim \la v+ \tau(v'-v)\ra^{\gamma/2-1}|v-v_*|\sin\frac{\theta}{2}\\
 &\lesssim \la v_* \ra^{ |\gamma/2-1|} \la v'-v_* \ra^{\gamma/2} \sin\frac{\theta}{2}\\
 &\lesssim \la v_* \ra^{|\gamma/2|+  |\gamma/2-1|} \la v' \ra^{\gamma/2} \sin\frac{\theta}{2},
\end{align*}
because $|v-v_*|/\sqrt 2 \le |v'-v_*| \le |v+ \tau(v'-v) -v_*| \le |v-v_*|$ for $\theta \in [0,\pi/2]$. Therefore,
we have
\begin{align}\label{coer-out}
\cC_\gamma(F_t,\, f) \ge 2^{-1-2|\gamma|} \cC_0 ( \chi_{B(R)}F_t,\, \varphi_R \la v \ra^{\gamma/2}   f     )
- C_R \int dF_t(v_*) \|f\|^2_{L^2_{\gamma/2}},
\end{align}
for a positive constant $C_R \sim R^{|\gamma|+  |\gamma-2|} $.
For a set $B(4R)$ we take a finite covering
\[
B(4R) \subset \mathop{\cup}_{v_j \in B(4R)} A_j \,, \enskip A_j = \{ v \in \RR^3\,;\, |v-v_j| \le \frac{r_0}{4}\}\,.
\]
For each $A_j$ we choose a non-negative smooth function $ \varphi_{A_j}$ which is  $1$ on $A_j$
and $0$ on $\{|v-v_j|\ge r_0/2\}$.
Note that
\[
\frac{r_0}{2} 
\le|v-v_*|\le 6R  \enskip
\mbox{on supp $(\chi_{B_j(R,r_0)})_* \varphi_{A_j}$}\,.
\]
Then we  have
\begin{align*}
&\Phi(|v-v_*|) (f'-f)^2
\gtrsim \min\{r_0^{\gamma^+}, R^{-(-\gamma)^+}\} \big( \chi_{B_j(R,r_0)}\big)_* \varphi_{A_j}^2(f'-f)^2 \\
&\gtrsim
R^{-\gamma^+} \min\{r_0^{\gamma^+}, R^{-(-\gamma)^+}\} \big(  \chi_{B_j(R,r_0)}\big)_* \\
&\quad \times
\left [\frac{1}{2} \Big(\big(\la v \ra^{\gamma/2} \varphi_{A_j} f \big)'-
\la v \ra^{\gamma/2} \varphi_{A_j} f \Big)^2
 -\Big( \big(\la v \ra^{\gamma/2} \varphi_{A_j}\big)'
 - \la v \ra^{\gamma/2} \varphi_{A_j}  \Big)^2 {f' }^2 \right]\,.
\end{align*}
Since $\left|\big(\la v \ra^{\gamma/2} \varphi_{A_j}\big)'
 - \la v \ra^{\gamma/2} \varphi_{A_j} \right| \lesssim R^{|\gamma|+1}\la v'\ra^\gamma \sin\theta/2$ if $|v_*|\le R$, we obtain
\begin{align}\label{coer-inner}
\cC_\gamma(F_t,\, f)
&\gtrsim
 \min\{(r_0/R)^{\gamma^+}, R^{-(-\gamma)^+}\}
\cC_0 ( \chi_{B_j(R,r_0)}F_t,\, \varphi_{A_j} \la v \ra^{\gamma/2}   f     )\\
& \qquad \qquad \qquad - C'_{R,r_0} \int dF_t(v_*)  \|f\|^2_{L^2_{\gamma/2}}, \notag
\end{align}
for a positive constant $C'_{R,r_0} \sim R^{2+2|\gamma|}$.

Notice that we may replace $\chi_{B(R)}$ and $\chi_{B_j(R,r_0)}$ in \eqref{coer-out} and \eqref{coer-inner}, respectively,
by $C_0^\infty$ functions $\tilde \chi_j$ which still satisfy  \eqref{uni-g}.  Then, 
from the argument in the subsection 2.1 of \cite{MY}
it follows that 
\[
\exists \kappa_0 >0 \enskip \mbox{such that} \enskip \widehat{ \tilde \chi_j F_0}(0) 
- \left|\widehat{ \tilde \chi_j F_0}(\xi^-)\right| \ge \kappa_0
\]
if $\xi^-$ belongs to a suitable compact set (see (2.3) of \cite{MY}).
Since for $\varphi(t,\xi) = \int e^{-iv\cdot\xi} dF_t(v)$ we have 
\[
\widehat{ \tilde \chi_j  F_t}(\xi) = \int_{\RR^3}  \varphi(t,\xi-\eta) \widehat{ \tilde \chi_j }(\eta) d\eta,
\]
it follows from the uniform continuity of $\varphi(t,\xi)$ on any fixed compact set and $|\varphi| \le 1$ that 
there exists a $T>0$ such that 
\[
 \enskip \widehat{ \tilde \chi_j  F_t}(0) 
- \left|\widehat{ \tilde \chi_j  F_t}(\xi^-)\right| \ge \frac{\kappa_0}{2} \enskip \mbox{for any $t \in [0,T]$}
\]
if $\xi^-$ belongs to the same suitable compact set stated above. 
Therefore we obtain 
\begin{align*}
\cC_0 (\tilde \chi_j F_t,\, f ) + \|f\|^2_{L^2} \gtrsim \|f\|^2_{H^s}\,,
\end{align*}
which, together with \eqref{coer-out} and \eqref{coer-inner},  shows that
there exist $c'_0, C, C' >0$ 
such that
\begin{align}\label{coer-uni-positive}
\cC_\gamma(F_t,\, f) &\ge c'_0\Big(
\|\la D \ra^s \varphi_R \la v \ra^{\gamma/2} f\|^2 + \sum_j \|\la D \ra^s \varphi_{A_j} \la v \ra^{\gamma/2} f\|^2\Big)
-C \|f\|_{L^2_{\gamma/2}}^2\\
&\ge c'_0 \|\la v \ra^{\gamma/2} f\|^2_{H^s} - C' \|f\|_{L^2_{\gamma/2}}^2, \notag
\end{align}
because $\varphi_R^2 + \sum_j \varphi_{A_j}^2 \ge 1$ and commutators
$[\la D\ra^s, \varphi_R]$, $[\la D \ra^s, \varphi_{A_j}]$ are $L^2$ bounded operators.

In the soft potential case $\gamma <0$, noting 
\begin{align*}
\Phi(|v-v_*|) (f'-f)^2
\gtrsim R^{-|\gamma|} \big( \chi_{B(R)}\big)_* (1-\varphi_{2R})^2(f'-f)^2,
\end{align*}
we can replace $\chi_{B_0(R, r_0)}$ in 
the condition  \eqref{uni-g} by $\chi_{B(R)}$, because 
one $1- \varphi_{2R}$ is enough to cover the set  $B_{4R}$, instead of finite many $\varphi_{A_j}$.
\end{proof}

\begin{proof}[Proof of Theorem \ref{smoothing-1st-try}]
As in \cite{amuxy7, MUXY-DCDS}, we use the mollifier as follows:
Let $\lambda , N_0 \in \RR$, $\delta >0$ and put
\begin{align}\label{mollifier}
M_\lambda^\delta(\xi) = \frac{\la \xi \ra^{\lambda}}{(1+ \delta \la \xi \ra )^{N_0}}\,,\, \enskip \la \xi \ra = (1+|\xi|^2)^{1/2}\,.
\end{align}
Then $M_\lambda^\delta(\xi)$ belongs to the symbol  class $S^{\lambda -N_0}_{1,0}$ of pseudo-differential operators for each fixed $\delta >0$ and
belongs to $S^{\lambda}_{1,0} $ uniformly with respect to $\delta \in (0,1]$.

We consider the case $\gamma >0$. Since we can take arbitrarily small $t_0 >0$ in both
Proposition \ref{first-coercivity} and \eqref{moment-gain} with a fixed large $\ell$, 
for the brevity we may assume
$t_0 =0$ there.  Noting $P_{\ell} (\RR^3) \subset H^{-3/2 -\varepsilon}_\ell(\RR^3)$ for $\forall \varepsilon >0$, we can apply the proof of Theorem 5.1 in \cite{amuxy7}. Namely it suffices to show,
for any $0<t_1\leq T$ that
\begin{equation}\label{l-2}
dF_t(v) = f(t,v) dv, \enskip f(t,\cdot) \in L^\infty([t_1, T]; L^2_\ell(\RR^3)).
\end{equation}
As in the proof of Theorem 4.1 of \cite{amuxy7},  we show this by  induction. 
Assume that for $0> a \ge -3/2 - \varepsilon$, we have
$$
\sup_{[0,T]} \|f(t,\cdot)\|_{H^{a}_\ell} < \infty\,.
$$
It should be noted that if $a < -3/2$ then it follows from \eqref{moment-gain} that for $t \in [0,T]$
\[
\|f(t,\cdot)\|_{H^{a}_\ell} ^2 \le \int \la \xi \ra^{2a} d\xi \left(\int (1+ |v|^{\ell})dF_t(v)\right)^2 < \infty.
\]
Put $\lambda(t) = N t +  a$ for $N >0$. When $0<s \le 1/2$, 
choose $N_0 = a +(5+\gamma)/2\ge 1- \varepsilon + (\gamma/2) >0$
such that \cite[(3.8)]{amuxy7}
is fulfilled. Put $\varepsilon_0 = (1- 2s')/8 >0$ and consider $\varepsilon = \varepsilon_0$,
where $0< s' < s$ is chosen to satisfy $\gamma +2s' >0$.
If we choose $N, T_1>0$ such that $NT_1 = \varepsilon_0$ then
\begin{equation*}\label{caution2}
s+ \lambda(T_1) - N_0 -a = s + \varepsilon_0 -N_0 \le  s-1 + 2 \varepsilon_0-(\gamma/2)
<  (s'-1/2) + 2 \varepsilon_0 <0\,,
\end{equation*}
which shows
\begin{equation}\label{H-S-est}
M_{\lambda(t)}^\delta f(t)  \in L^\infty([0, T_1];  H^s_\ell (\RR^3))\,,
\end{equation}
where 
\[
M_{\lambda(t)}^\delta f(t) = \int e^{i v\cdot \xi}M_{\lambda(t)}^\delta(\xi) \varphi(t, \xi) \frac{d\xi}{2\pi^3},
\enskip \varphi(t, \xi) =\int_{\RR^3} e^{-iv\cdot \xi} dF_t(v).
\]
When $s >1/2$ we choose $1/2 < s' < s$ such that $\gamma +2s' >0, 2s' \ge (2s-1) $.
 Choose $N_0 = a + (5+\gamma+2s'-1)/2$ such that
\cite[(3.9)]{amuxy7} is satisfied. Put $\varepsilon_0 = (\gamma +1 )/10 >0$ and consider $\varepsilon = \varepsilon_0$.
Then, we have
\begin{equation}\label{caution3}
s+ \lambda(T_1) - N_0 -a = s + \varepsilon_0 -N_0 \le  s-s' + 2 \varepsilon_0-(1+\gamma)/2
= s-s'-3 \varepsilon_0 \,. 
\end{equation}
Since we may assume $s-s' \le \varepsilon_0$, \eqref{caution3} also shows \eqref{H-S-est}.

The formula \eqref{H-S-est}, together with \cite[Theorem 3.6 and Proposition 3.8]{amuxy7},
leads us to the fact  that, for any $t\in (0, T_1]$, 
\begin{eqnarray}\label{energyequality1}
&&\frac{1}{2}\int_{\RR^3} \big|M_{\lambda(t)}^\delta f(t)\big|^2 dv
-{N}\int^t_0 \int_{\RR^3}
\Big|(\log \la D \ra)^{1/2} M_{\lambda(\tau)}^\delta f(\tau) \Big|^2dv d\tau \notag \\
&& = \frac{1}{2}\int_{\RR^3} \big|M_{\lambda(0)}^\delta f_0\big|^2  dv\\
&& + \int^t_0
\Big( Q\big (F_\tau, M_{\lambda(\tau)}^{\delta}f(\tau)\big ), \,
M_{\lambda(\tau)}^{\delta} f(\tau)\Big)_{L^2} d\tau \notag\\
&& + \int^t_0
\Big( M_{\lambda(\tau)}^{\delta}Q\big (F_\tau, F_\tau \big)
-Q\big (F_\tau, M_{\lambda(\tau)}^{\delta}f(\tau)\big ), \,
M_{\lambda(\tau)}^{\delta} f(\tau)\Big)_{L^2} d\tau, \notag
\end{eqnarray}
by the quite same manipulation as in the proof of \cite[Lemma 4.3]{amuxy7}.
Indeed, similar as \cite[(4.7)]{amuxy7}, from the definition \eqref{definition2-2}
 we start by the formula 
\begin{align*}
&\int_{\bR^3}\psi(v)dF_t- \int_{\bR^3}\psi(v)dF_{t'}=
\frac1 2 \int_{t'}^t\int_{\bR^3}\int_{\bR^3}\int_{\bS^2}b(\cdot)|v-v_*|^\g \\
&\quad \times (\psi(v_*')+\psi(v')
		-\psi(v_*)-\psi(v))dF_\tau(v)dF_\tau(v_*)d\s d\tau, \enskip
0 \le t' \le t \le T_1\,,
		\end{align*}
and set $\psi=(M_{\lambda(\bar{t})}^\delta)^2 f(\bar{t})$ with $\bar{t}=t, t'$. All other steps are
carried on by regarding $F \in P_0(\RR^3)$ as $F \in \cS'(\RR^3)$, in view of 
the pseudo-differential operator calculus.

Applying \cite[Thereom 3.6]{amuxy7} to the last term of \eqref{energyequality1}, we have 
\begin{align}\label{imp-ene}
&\frac{1}{2}\|\big (M_\lambda^\delta f \big)(t)\|_{L^2}^2
\leq  \frac{1}{2}\|f(0)\|_{H^a}^2 + \int_0^t
\Big( Q( F_\tau, \big(M_\lambda^\delta f\big)(\tau)\,) ,  \big(M_\lambda^\delta f\big)(\tau)\Big) d\tau
\notag \\
&\quad +C_F \int_0^t \Big( 
\|\big(
M_\lambda^\delta f\big)(\tau)\|_{H^{s'} _{\gamma +(2s -1)^+}} + \|f(\tau)\|_{H^a_\gamma}\Big)
\|\big(M_\lambda^\delta f\big)(\tau) \|_{H^{s'}} d\tau\\
&\quad  + C N  \int_0^t \|(\log \la D\ra)^{1/2} \big (M_\lambda^\delta f \big)(\tau)\|_{L^2}^2 d\tau\,.
\notag
\end{align}
Proposition \ref{first-coercivity}, together with the interpolation in
the Sobolev space, yields
\[
\Big( Q( F_\tau, \big(M_\lambda^\delta f\big)(\tau)\,) ,  \big(M_\lambda^\delta f\big)(\tau)\Big)
\le
-c_F \|\big(M_\lambda^\delta f\big)(\tau)\|^2_{H^s_{\gamma/2}}
+ C_F\|f(\tau)\|^2_{H_{\gamma/2}^{-2}}\,.
\]
Use an interpolation inequality
concerning weighted type Sobolev spaces with respect to variable $v$ as follows: 
For any $k\in\RR, p\in\RR_+, \eta>0$,
\begin{equation}\label{lemma2.3}
\|f\|^2_{H^{k}_p(\RR^3_v)}\leq C_\eta \|f\|_{H^{k-\eta}_{2
p}(\RR^3_v)} \| f\|_{H^{k+\eta}_0(\RR^3_v)} \enskip \mbox{, see for instance \cite{HMUY}}.
\end{equation}
Then we have, for a suitable large $\ell >0$,
\begin{equation*}
\|\big (M_\lambda^\delta f \big)(t)\|_{L^2}^2
+ c \int_0^t \|\big(M_\lambda^\delta f\big)(\tau)\|^2_{H^s_{\gamma^+/2}}d\tau
\leq  \|f(0)\|_{H^a}^2 + C\int_0^t \|f(\tau)\|^2_{H_{\ell}^{a}} d\tau\,.
\end{equation*}
Taking $\delta \rightarrow +0$ and $t=T_1$, we have $f(T_1) \in H^{\lambda(T_1)}= H^{\varepsilon_0+a}$
because $NT_1 = \varepsilon_0$. Repeating the same procedure with $0$ and $a$ replaced by
$T_1$ and $a_1 = \varepsilon_0 +a$, we attain to \eqref{l-2}, finite times,  for any small $t_1 >0$
since the above $T_1 >0$ can be arbitrarily small by choosing a large $N$. 

The case $\gamma < 0$ is similarly handled by means of \eqref{lemma2.3},
for the solution satisfying \eqref{moment-gain}.
We remark that the restriction $\gamma >-1$ comes from what we putted $\varepsilon_0 = (\gamma +1)/10$
when $s >1/2$.  

Once we obtain \eqref{l-2}, it follows from \cite[Theorem 4.1]{amuxy7} that, for any 
$0< t_0 < T $, we have 
$
f\in L^\infty([t_0, T]; \cS(\RR^3))$.
Since, by means of \cite[Proposition 2.9]{AMUXY-ARMA2011}, for any $\ell >0$ we have
\[
\|Q(f,g)\|_{L^2_\ell} \lesssim \Big(\|f\|_{L^1_{\ell+ \gamma +2s}} + \|f\|_{L^2} \Big)
\|g\|_{H^{2s}_{\ell + \gamma+2s}},
\]
the equation \eqref{BE} and the Leibniz formula lead us to 
$
F_t \in C^\infty([t_0, \infty); \mathcal{S}(\RR^3)).
$

\end{proof}
\bigskip

\noindent
{\bf Acknowledgements:} The research of the first author was supported in part
by  Grant-in-Aid for Scientific Research No.25400160,
Japan Society for the Promotion of Science. The research of the third author was
supported in part by the General Research Fund of Hong Kong, CityU No. 11303614.



\begin{thebibliography}{99}

\bibitem{ADVW} R. Alexandre, L. Desvillettes, C. Villani and B. Wennberg,
Entropy  dissipation and long-range interactions,  {\it Arch.
Rational Mech. Anal.} {\bf 152} (2000),  327-355.



%
%
%
%
\bibitem{AMUXY-ARMA2011} 
R. Alexandre, Y. Morimoto, S. Ukai, C.-J. Xu and T. Yang, 
{Boltzmann equation without angular cutoff in the whole space:  Qualitative properties of solutions},
{\it Arch. Rational Mech. Anal.,} {\bf 202} (2011),599-661.



\bibitem{AMUXY-JFA} R. Alexandre, Y. Morimoto, S. Ukai, C.-J. Xu
and T. Yang, The Boltzmann equation without angular cutoff in the whole space: I, Global existence for soft potential, {\it  J. Funct. Anal.}
{\bf 262} (2012), 915-1010.

\bibitem{amuxy7} R. Alexandre, Y. Morimoto, S. Ukai, C.-J. Xu
and T. Yang, Smoothing effect of weak solutions
for the spatially homogeneous Boltzmann equation without
angular cutoff, {Kyoto J. Math.} {\bf 52} (2012), 433-463.



%

\bibitem{Bobylev-DANS-1975}A. V. Bobylev, The method of the Fourier transform in the theory of the Boltzmann equation for
Maxwell molecules. {\it Dokl. Akad. Nauk SSSR} {\bf 225} (6) (1975), 1041-1044.

\bibitem{Bobylev-1988} A. V. Bobylev, The theory of the nonlinear spatially uniform Boltzmann equation for Maxwell
molecules. In {\it Mathematical physics reviews, Vol. 7,} volume 7 of {\it Soviet Sci. Rev. Sect. C Math. Phys.
Rev.}, pages 111-233. Harwood Academic Publ., Chur, 1988.
\bibitem{Cannone-Karch-1} M. Cannone and G. Karch,
{Infinite energy solutions to the homogeneous Boltzmann equation},
{\it Comm. Pure Appl. Math.} {\bf 63} (2010), 747-778.



\bibitem{Carlen-Gabetta-Toscani} E. A. Carlen, E. Gabetta
 and G. Toscani, Propagation of smoothness and the rate of exponential convergence to equilibrium for a spatially homogeneous Maxwellian gas, {\it Comm. Math. Phys.},
{\bf 199} (1999) 521-546.


%
%
%
%
%
\bibitem{Fournier-2015}
N. Fournier, Finiteness of entropy for the homogeneous Boltzmann equation with measure initial condition,
{\it Ann. Appl. Probab.}  {\bf 25} (2015), 860-897.





\bibitem{Gabetta-Toscani-Wennberg}
E. Gabetta, G. Toscani and B. Wennberg, Metrics for probability distributions and the rend to equilibrium for solutions of the Boltzmann equation, {\it J. Statist. Phys}, {\bf 81}, 901-934.

\bibitem{HMUY} Z.H. Huo, Y. Morimoto, S. Ukai, T. Yang, \textit{Regularity of
solutions for spatially homogeneous Boltzmann equation without
Angular cutoff.} { Kinetic and Related Models}, {\bf 1} (2008)
453-489.


\bibitem{Jacob}N. Jacob, Pseudo-differential operators and Markov processes. Vol 1:
Fourier analysis and semigroups. Imperial College Press, London, 2001.
%
%
\bibitem{lu-mouhot} X. Lu and C. Mouhot, On measure solutions of the
Boltzmann equation, part I:
Moment production and stability estimates, {\it
Jour. Diff. Equa.} {\bf 252}(2012), 3305--3363.

\bibitem{lu-wennberg} X. Lu and B. Wennberg, Solutions with increasing energy for the spatially homogeneous 
Boltzmann equation, {\it Nonlinear Anal. Real World Appl.} {\bf 3}(2002), 243-258.

\bibitem{mischler-wennberg}
S. Mischler and B. Wennberg, On the spatially homogeneous Boltzmann equation,
{\it Ann. Inst. H. Poincar\'e Anal. Non Lin\'eaire},
{\bf 16} (1999), 467-501.

\bibitem{morimoto-12}Y. Morimoto,
A remark on Cannone-Karch solutions
to the homogeneous Boltzmann equation for Maxwellian molecules,
{ \it Kinetic and Related Models}, {\bf 5} (2012), 551-561.

\bibitem{MUXY-DCDS}
Y. Morimoto, S. Ukai, C.-J. Xu and T. Yang, {Regularity of solutions to the
spatially homogeneous Boltzmann equation without angular cutoff},
{\it Discrete and Continuous Dynamical Systems -
 Series A} {\bf 24} (2009), 187--212.
%

\bibitem{MWY-2014}
Y. Morimoto, S. Wang and T. Yang, {A new characterization and global regularity
	of infinite energy solutions to the homogeneous Boltzmann equation},
{\it J. Math. Pures Appl.} {\bf 103} (2015), 809--829.


\bibitem{MWY-2015}
Y. Morimoto, S. Wang and T. Yang, { Moment classification of infinite energy solutions to the homogeneous Boltzmann equation }, to appear in
{\it  Analysis and Applications(2015)  DOI: 10.1142/S0219530515500232,
http://arxiv.org/abs/1506.06493}.

\bibitem{MY} Y. Morimoto and T. Yang,
Smoothing effect of the homogeneous Boltzmann equation with measure valued initial datum,
{\it Ann. Inst. H. Poincar\'e Anal. Non Lin\'eaire},
{\bf 32} (2015), 429-442



\bibitem{PT} A. Pulvirenti and G. Toscani, The theory of the nonlinear Boltzmann equation for Maxwell molecules in Fourier
representation, {\it Ann. Mat. Pura Appl.} {\bf 171} (1996), 181-204.




\bibitem{PW0} M. Pulvirenti and B. Wennberg, Lower bounds for the solutions to the Kac and the Boltzmann equation. Proceedings of the Second International Workshop on Nonlinear Kinetic Theories and Mathematical Aspects of Hyperbolic Systems (Sanremo, 1994).
Transport Theory Statist. Phys. 25(1996), 437-446. 


\bibitem{PW} M. Pulvirenti and B. Wennberg, A Maxwellian lower bound for solutions to the Boltzmann equation,
{\it Comm. Math. Phys. } {\bf 183}(1997), 145-160. 
%
%
%



\bibitem{toscani-villani}G. Toscani and C. Villani, Probability metrics and uniqueness of the solution to the Boltzmann equations for Maxwell gas, {\it J. Statist. Phys.,} {\bf 94} (1999), 619-637.


\bibitem{villani}C. Villani, {On a new class of weak solutions to the spatially
homogeneous Boltzmann and Landau equations}, {\it Arch. Rational
 Mech. Anal.,} {\bf 143} (1998), 273--307.


\bibitem{villani2} C. Villani, A review of mathematical
topics in collisional kinetic theory. In: Friedlander S.,
Serre D. (ed.),
Handbook of Fluid Mathematical Fluid Dynamics, Elsevier Science   (2002).


\bibitem{villani3} C. Villani,
Topics in optimal transportation. Graduate Studies in Mathematics, {\bf 58}. American Mathematical Society, Providence, RI, (2003) .  


\bibitem{zhang-zhang}X. Zhang and X. Zhang, Probability approaches to spatially homogeneous Boltzmann equations. 
{\it Stoch. Anal. Appl.} {\bf 25} (2007), no. 6, 1129-1150. 

\end{thebibliography}
\end{document}